\def\clap#1{\hbox to 0pt{\hss#1\hss}}
\def\mathllap{\mathpalette\mathllapinternal}
\def\mathrlap{\mathpalette\mathrlapinternal}
\def\mathclap{\mathpalette\mathclapinternal}
\def\mathllapinternal#1#2{%
           \llap{$\mathsurround=0pt#1{#2}$}}
\def\mathrlapinternal#1#2{%
           \rlap{$\mathsurround=0pt#1{#2}$}}
\def\mathclapinternal#1#2{%
           \clap{$\mathsurround=0pt#1{#2}$}}
\newtheorem{theorem}{Theorem}
\newtheorem{lemma}[theorem]{Lemma}
\newtheorem{proposition}[theorem]{Proposition}
\theoremstyle{remark}
\newcommand{\mc}[1]{{\mathcal #1}}
\newcommand{\mb}[1]{{\mathbf #1}}
\newcommand{\bb}[1]{{\mathbb #1}}
\newcommand{\<}{\langle}
\renewcommand{\>}{\rangle}
\newcommand{\linn}{\langle\!\!\langle}
\newcommand{\rinn}{\rangle\!\!\rangle}
\DeclareMathOperator{\supp}{supp}
\DeclareMathOperator*{\nop}{\smash{\mathrm{sup}}}
\newcommand{\Iex}{\mc I_{\mathrm{ex}}}
\newcommand{\Irw}{\mc I_{\mathrm{rw}}}
\begin{document}

\title{Explicit LDP for a slowed RW driven by a symmetric exclusion process}

\author{\renewcommand{\thefootnote}{\arabic{footnote}}
L.\ Avena \footnotemark[1],
\renewcommand{\thefootnote}{\arabic{footnote}}
M.\ Jara \footnotemark[2],
\renewcommand{\thefootnote}{\arabic{footnote}}
F.\ V\"ollering \footnotemark[3]}

\footnotetext[1]{MI, University of Leiden, The Netherlands. E-mail: l.avena@math.leidenuniv.nl}
\footnotetext[2]{IMPA, Rio de Janeiro, Brazil. E-mail: mjara@impa.br}
\footnotetext[3]{University of M\"unster, Germany. E-mail: f.voellering@uni-muenster.de}

\maketitle

\begin{abstract}
We consider a random walk (RW) driven by a simple symmetric exclusion process (SSE). Rescaling the RW and the SSE in such a way that a joint hydrodynamic limit theorem holds we prove a joint path large deviation principle. The corresponding large deviation rate function can be split into two components, the rate function of the SSE and the one of the RW given the path of the SSE. 
These components have different structures (Gaussian and Poissonian, respectively) and to overcome this difficulty we make use of the theory of Orlicz spaces.
In particular, the component of the rate function corresponding to the RW is explicit.

\vspace{0.5cm}\noindent
{\it 2010 Mathematics Subject Classification:} 60F10, 82C22, 82D30.\\
{\it Keywords:} large deviations, random environments, hydrodynamic limits, particle systems, exclusion process.\\
\\
{\it Acknowledgments:} L.A. has been supported by NWO Gravitation Grant 024.002.003-NETWORKS.
\end{abstract}

\newpage


\section{Introduction}

\subsection{Background}
Random evolution on a random medium has been the object of intensive research within the mathematics and physics communities over at least the last forty years. 
Although there are plenty of rigorous and non-rigorous results obtained through a wide range of techniques and methods, it is far from being a closed subject.
Since the works of Solomon \cite{Sol}, Harris \cite{Har} and Spitzer \cite{Spi}, random walks in both static and dynamic random environments have been a prolific way to study this problem within the context of probability theory (see \cite{Z} for a review in the static case). In the case of {\em dynamic} random environments, considerable progress has been recently achieved (see e.g. \cite{AvedSanVol}, \cite{PerStaSte}, \cite{RedVol} and references therein), but a key ingredient common to all these developments is the availability of {\em good mixing properties} of the environment. More recently, examples of dynamic random environments with less restrictive mixing properties have been considered \cite{HilTeidSanSiddHol}, but the general picture is still far from being understood (see \cite{AveTho} for some conjectures based on simulations). 

A very simple way to obtain a family of dynamic random environments with poor mixing properties is to consider  {\em conservative particle systems} as dynamic environments. On the one hand these environment processes are very well understood, in particular their mixing properties are well known, on the other hand the essential difficulties coming from the poor mixing are encoded into the conservation laws. 

In this article the dynamic environment is given by a simple, symmetric exclusion process, as in \cite{AveFraJarVol}, \cite{ARdH}, \cite{AvedSanVol}, \cite{AveTho}, \cite{HuvSim}, \cite{KV}, \cite{SV}. On top of this environment, we run a simple random walk with jumping rates depending on the portion of the environment it sees. The exclusion particles do not feel the presence of the random walk. 
We introduce a scaling parameter $n \in \bb N$ and we {\em speed up} the exclusion particles with respect to the random walk by a factor of $n$. Although this speeding up seems to be there in order to give us the necessary mixing properties for the environment, this is not the case. At least at a formal level what happens is that this scale is the {\em crossover scale} between a regime on which the environment behaves essentially as frozen from the point of view of the random walker, and a regime on which the environment mixes fast enough to put us back on the setting of previous works.

\subsection{The model and result at a glance: an example}

Let us consider the following problem. For the sake of clarity, we begin by considering a simple case. In Section \ref{s2} we define the general model. Let $n \in \bb N$ be a scaling parameter, which will be sent to $+\infty$ later on. On a discrete circle $\bb T_n$ with $n$ points, we run a symmetric, simple exclusion process $\{\eta_t^n; t \in [0,T]\}$,\footnote{The time window $[0,T]$ is chosen to be of finite size to avoid some technical topological considerations. Indeed, $[0,\infty)$ is not compact and one would have to be more careful in weighting the tails near infinity.} speeded up by $n^2$. We call this process the {\em dynamic environment}. Given a realization of the process $\{\eta_t^n;  t \in [0,T]\}$, we run a simple random walk on $\bb T_n$ with the following dynamics. The walk waits an exponential time of rate $n$, at the end of which it jumps to the left with probability $\tfrac{1}{3}$, it jumps to the right with probability $\tfrac{1}{3}$ and with probability $\tfrac{1}{3}$ it looks at the {\em environment} $\eta_t^n$. Let $x$ be the current position of the walk. If $\eta_t^n(x) =1$, the walk jumps to the right, and if $\eta_t^n(x) =0$ the walk jumps to the left. Notice that the particle is {\em speeded up by }$n$. Let us think about the circle $\bb T_n$ as a discrete approximation of the continuous circle of length 1. The different speeds of the environment and the walk are taken in such a way that the environment has a {\em diffusive scaling} and the walk has a {\em ballistic} (or {\em hyperbolic}  in the terminology of hydrodynamic limits) scaling. Let us start the exclusion process from a non-equilibrium initial distribution. In order to fix ideas, imagine that $\eta_0^n(x) = 1$ if $|x| \leq \frac{n}{4}$ and $\eta_0^n(x)=0$ otherwise. This initial distribution of particles is a discrete approximation of the density profile $u_0(x) = \mathbf{1}_{|x| \leq 1/4}$. It is precisely under this diffusive space-time scaling that the limiting density profile has a non-trivial evolution. This limiting profile $u(t,x)$ turns out to be the solution of the heat equation on the continuous circle, with initial condition $u_0$. This convergence is what is known in the literature as the {\em hydrodynamic limit} of the exclusion process (see Chapter 4 of \cite{KL} for more details and further references). Now let us describe the scaling limit of the walk. If the density of particles of the exclusion process is equal to $\rho \in [0,1]$, then one expects that the walk will move with velocity $v(\rho) = \frac{1}{3} (2\rho-1)$. Notice that a hyperbolic scaling is needed for the walk in order to have a non-trivial macroscopic velocity. Therefore, the macroscopic position of the walk should satisfy the ODE 
\begin{equation*}
{\dot \varphi}_t = v(u(t,\varphi_t)).
\end{equation*}
This heuristic reasoning has been made precise in \cite{AveFraJarVol} in the form of a functional weak law of large numbers for the walk. We obtain in this paper a large deviation principle associated to this law of large numbers. The form of the rate function associated to this large deviation principle is given by the variational formula
\begin{equation*}
\mc I(x) = \inf_{\pi} \big\{ \mc I_{\mathrm{rw}}(x|\pi) + \mc I_{\mathrm{ex}}(\pi)\big\},
\end{equation*}
where $\mc I_{\mathrm{rw}}(x|\pi)$ is the rate function of a random walk on a given space-time realization of the environment $\pi$ and $\mc I_{\mathrm{ex}}(\pi)$ is the rate function of the large deviation principle associated to the hydrodynamic limit of the exclusion process (see Section \ref{s3} for more precise definitions). This variational formula is very reminiscent of the variational formula relating the quenched and averaged large deviation principles for random walks in random environments \cite{CGZ},\cite{GdH}, see in particular Eq.~(9) of \cite{CGZ}. Notice that in our setting a ``quenched" large deviation principle or even a quenched law of large numbers is out of reach since the exclusion process does not have an almost sure hydrodynamic limit. Anyway, the interpretation of the variational formula is the same as the corresponding one for random walks in random environments. The function $\mc I_{\mathrm{rw}}(x|\pi)$ is the cost of observing a trajectory $x$ when the environment has a space-time density $\pi$, and $\mc I_{\mathrm{ex}}(\pi)$ is the cost of changing the density of the environment to $\pi$.

Our method of proof, however, differs from the one in \cite{CGZ}. In \cite{AveFraJarVol}, we proved a joint law of large numbers for the environment and the walk. We show in this article that the rate function of the corresponding large deviation principle is given by $\mc I_{\mathrm{rw}}(x|\pi) + \mc I_{\mathrm{ex}}(\pi)$. The desired result follows as an application of the contraction principle. 

\subsection{Discussion}\label{discussion}

There are not many works addressing the question of large deviations for random walks in dynamic random environment. In \cite{ARdH}, the authors show a large deviation principle (LDP) for the empirical speed on some attractive random environments. They also show that the rate function in the case of an exclusion process as a random environment has a flat piece. This reference is the closest in spirit to our work. To our knowledge, the earliest reference in this field seems to be  \cite{I-R}, then, in a series of papers, \cite{CDRR-AS}, \cite{R-ASY}, \cite{RedVol}, \cite{Yil} the authors show an LDP for fairly general dynamic random environments. In \cite{dhS}, the authors gave an LDP for a random walk driven by a contact process. In all of these results, the environment is Markovian (except fo the more general setting in \cite{R-ASY}) and it is assumed to start from an {\em ergodic equilibrium}. One of the differences of our work with respect to these results is that we consider environments which start from a {\em local equilibrium}, see \eqref{ec18}. These environments are more general than ergodic equilibria and they give rise to a richer phenomenology. Our variational formula for the rate function could in principle be explicit enough to allow some finer analysis of the behavior of the walk, but we do not pursue this line of research here.

Our method of proof is very different from what has been done before, and as mentioned above it relies on a joint LDP for the couple environment-random walk. The large deviations of the environment are quadratic in nature, since large fluctuations are built up on small, synchronised variations  of the behavior of individual particles, and the large deviations of the random walk are exponential in nature due to the Poissonian structure of the walk. For this reason the joint LDP proved to be very difficult to obtain. In particular, we need to deal with non-convex entropy cost functions. 

From the point of view of interacting particle systems, the problem addressed in this work is close in spirit to the problem of the behavior of a tagged particle in the exclusion process. In fact, we borrowed from \cite{JLS} the strategy of proof of the joint environment-walk law of large numbers, although this strategy can be traced back to the seminal article \cite{KV}. Recently, an LDP for the tagged particle in one-dimensional, nearest-neighbor symmetric exclusion process has been obtained \cite{SV}. On the one hand, the results in \cite{SV} are more demanding, because the motion of the tagged particle affects the motion of the environment in a sensitive way.  On the other hand, our result is more intricate because of the mixture between Poissonian and Gaussian rate functions. This last point obliges us to use the machinery of {\em Orlicz spaces} in order to show that the variational problem that defines the rate function is well-posed. In the realm of interacting particle systems, this kind of problems poses real difficulties in order to obtain an LDP. A family of models which shares the difficulties found in this work is a conservative dynamics superposed to a creation-annihilation mechanism. To our knowledge, the best result so far is found in \cite{BL}. In that article, a creation-annihilation (or {\em Glauber}) mechanism is superposed to the exclusion dynamics with a speeding up of the exclusion process in order to make both dynamics relevant in the macroscopic limit. As in our case, the rate function of the LDP can be written as a combination of the Gaussian rate function of the exclusion process and a Poissonian rate function coming from the Glauber dynamics. However, they impose an additional condition (see Assumption (L1) on page 8 of \cite{BL}) which makes some key cost functions convex. This point is very technical but also very delicate, and it is the key to proving that the upper and lower bounds match. We overcame this problem by using the theory of Orlicz spaces, see Section \ref{s7.3}.

\subsection{Organization of the article}

In Section \ref{s2} we describe our model in full generality. We fix some notation and in particular we introduce the {\em environment as seen by the walker}, which will be very important in order to relate the behaviors of the walk and of the environment. We also describe the hydrodynamic limits associated to the exclusion process, as well as to the environment as seen by the walker. This part summarizes the functional law of large numbers obtained in \cite{AveFraJarVol}.  In Section \ref{s3} we start explaining what we understand by a large deviation principle for the couple environment-walk. We put some emphasis on the topologies considered for the process, since they are not the standard ones. In particular, we look at the random walk as a {\em signed} Poisson point process. The trajectory of the random walk can be easily recovered from this process and vice-versa, but the topology of signed measures turns out to be more convenient. We finally state our main result, Theorem \ref{t2} on page 11 which is a large deviation principle for the couple environment-walk. The large deviation principle for the walk, Theorem \ref{t1},  follows at once from Theorem \ref{t2} via the contraction principle. 
In Section \ref{Martingales} we define some exponential martingales which will be used to tilt our dynamics, following the usual Donsker-Varadhan (see e.g. \cite{DV}) strategy of proof for large deviations of Markov processes.
In Section \ref{s4} we show what is called in the literature the {\em superexponential lemma}. This lemma allows to do two things. First, it allows to write the exponential martingales introduced in Section \ref{Martingales} as functions of the couple environment-walk plus an error term which is superexponentially small. This step is the starting point of the upper bound. And second, it allows to obtain the hydrodynamic limit of suitable perturbations of the dynamics. The latter is the starting point of the lower bound.
In Section \ref{s5} we show an energy estimate. This energy estimate allows to restrict our considerations to the space of measures with {\em finite energy} with respect to the Lebesgue measure. In particular, all these measures will be absolutely continuous with respect to Lebesgue measure. This point is crucial, since we need to evaluate this density at the location of the random walk in order to know its local drift. 
In Section \ref{s6} we prove the large deviation upper bound and in Section \ref{s7} we prove a matching lower bound, which finishes the proof of the large deviation principle for the couple environment-walk.

\section{The model}
\label{s2} 
\subsection{The environment}
\label{s1.1}
Let $n \in \bb N$ be a scaling parameter, $\bb T_n = \frac{1}{n} \bb Z / \bb Z$ be the discrete circle of size $n$ and $\Omega_n = \{0,1\}^{\bb T_n}$. We denote by $\eta = \{\eta(x); x \in \bb T_n\}$ the elements of $\Omega_n$ and we call $\eta$ a {\em configuration of particles}. The elements $x$ of $\bb T_n$ will be called {\em sites}, and we say that there is a particle at site $x \in \bb T_n$ in configuration $\eta$ if $\eta(x) =1$. Otherwise, we declare the site $x$ to be {\em empty}. We say that $x,y \in \bb T_n$ are {\em neighbours} if $|y-x|=\frac{1}{n}$. In this case we write $x \sim y$. Fix $T>0$. The simple, symmetric {\em exclusion process} on $\bb T_n$ is the continuous-time Markov process $\{\eta_t^n; t \in [0,T]\}$ with the following dynamics. To each pair of neighbours $\{x,y\}$ on $\bb T_n$ we attach a Poisson clock of rate $n^2$, independent of the other clocks. Each time the clock associated to the pair $\{x,y\}$ rings, we exchange the values of $\eta_t^n(x)$ and $\eta_t^n(y)$.

For $\eta \in \Omega_n$ and $x,y \in \bb T_n$, we define $\eta^{x,y} \in \Omega_n$ as
\begin{equation*}
\eta^{x,y}(z) =
\begin{cases}
\eta(y); &z=x,\\
\eta(x); &z=y,\\
\eta(z); &z \neq x,y.
\end{cases}
\end{equation*}
The process $\{\eta_t^n; t \in [0,T]\}$ is generated by the operator given by
\begin{equation}\label{exGen}
L^{ex}_n f(\eta) = n^2 \sum_{x \sim y}\big(f(\eta^{x,y})-f(\eta)\big)
\end{equation}
for any $f: \Omega_n \to \bb R$. Notice that if the initial configuration $\eta_0^n$ has only one particle, this particle follows a simple random walk. This fact explains the acceleration $n^2$ in the dynamics, corresponding to a diffusive space-time scaling. We consider the process defined on a finite time window $[0,T]$ to avoid uninteresting topological issues (see the footnote on page 2).  

By reversibility and irreducibility, for each $k \in \{0,1,\dots,n\}$, the uniform measure $\nu_{k,n}$ on 
\begin{equation*}
\Omega_{n,k} = \Big\{\eta \in \Omega_n; \sum_{x \in \bb T_n} \eta(x) =k\Big\}
\end{equation*}
is invariant and ergodic under the evolution of $\{\eta_t^n; t \in [0,T]\}$. Equivalently, for each $\rho \in [0,1]$ the product Bernoulli measure $\nu_\rho$ on $\Omega_n$, defined by
\begin{equation*}
\nu_\rho(\eta) = \prod_{x \in \bb T_n} \big\{ \rho \eta(x) + (1-\rho)(1-\eta(x))\big\}
\end{equation*}
is invariant under the evolution of $\{\eta_t^n; t \in [0,T]\}$.

\subsection{Some notation}

For $x \in \bb T_n$, let $\tau_x: \Omega_n \to \Omega_n$ be the canonical shift, that is, $\tau_x \eta(z) = \eta(z+x)$ for any $\eta \in \Omega_n$ and any $z \in \bb T_n$. For $f : \Omega_n \to \bb R$ we define $\tau_x f: \Omega_n \to \bb R$ as $\tau_x f(\eta) = f(\tau_x \eta)$ for any $\eta \in \Omega_n$. 

We say that a set $A \subseteq \bb T_n$ is the {\em support} of a function $f: \Omega_n \to \bb R$ if:
\begin{itemize}
\item[i)] for any $\eta, \xi \in \Omega_n$ such that $\eta(x)= \xi(x)$ for all $x \in A$, $f(\eta) = f(\xi)$,
\item[ii)] $A$ is the smallest set satisfying i).
\end{itemize}
We denote this by $A = \supp(f)$.

Let $\Pi: \bb Z \to \bb T_n$ be the unique map from $\bb Z$ to $\bb T_n$ such that $\Pi(0)=0$ and $\Pi(x+1)-\Pi(x) = \frac{1}{n}$ for any $x \in \bb Z$, that is, $\Pi$ is the {\em canonical covering} of $\bb T_n$ by $\bb Z$.
Consider $\Omega = \{0,1\}^{\bb Z}$. We say that a function $f: \Omega \to \bb R$ is {\em local} if there exists a finite $A \subseteq \bb Z$ such that for any $\eta, \xi \in \Omega$ with $\eta(x)= \xi(x)$ for all $x \in A$, $f(\eta) = f(\xi)$. For a local function $f: \Omega \to \bb R$, we can define $\supp(f)$ as above. We can identify $\Omega_n$ with the set $\{0,1\}^{\{\lfloor-\frac{n}{2}+1\rfloor,\dots,\lfloor \frac{n}{2}\rfloor\}}$. 
Using this identification, any local function $f: \Omega \to \bb R$ can be lifted to a function (which we still denote by $f$) from $\Omega_n$ to $\bb R$, for any $n$ large enough. Moreover, under this convention, the lifting is unique. We will use the following notation. A local function $f: \Omega_n \to \bb R$ is actually a family of functions $\{f_n:\Omega_n \to \bb R; n \geq n_0\}$, all of them lifted to $\Omega_n$ from a common function $f: \Omega \to \bb R$, which we assume to be local. For a local function $f: \Omega_n \to \bb R$, $\supp(f)$ will denote either the support of $f $ on $\bb Z$ or the support of $f_n$ on $\bb T_n$, which is equal to $\Pi(\supp(f))$.

\subsection{The random walk}
Let $c: \Omega\times\{+,-\} \to [0,\infty)$ be a local function, and let $c_n$ be the lifted version on $\Omega_n$. Define $c^\pm_n:\Omega_n \times \bb T_n$ via the {\em cocycle property}: $c^{\pm}_n(\eta;x) =  c_n(\tau_x\eta,\pm)$ for any $\eta \in \Omega_n$ and any $x \in \bb T_n$. As the dependence on $n$ is clear from context we simply write $c^\pm$. Without loss of generality we only consider $n$ large enough so that the lifting of $c$ exists.
We call $c$ a {\em jump rate}. An archetypical example is
\begin{equation*}
c^+(\eta; x) = \alpha + (\beta-\alpha)\eta(x), \quad c^-(\eta;x) = \beta +(\alpha-\beta)\eta(x), \quad \text{for some } \alpha,\beta>0.
\end{equation*} 

The random walk in {\em dynamic random environment} $\{\eta_t^n; t \in [0,T]\}$ with jump rate $c$ is the continuous-time Markov process $\{x_t^n; t \in [0,T]\}$ with values in $\bb T_n$ with the following dynamics. For simplicity, assume that $c^++c^- \equiv 1$, the reader can see that this assumption is not relevant. We attach to a random walker a Poisson clock of rate $n$, independent of the process $\{\eta_t^n; t \in [0,T]\}$. Each time the clock rings, the particle jumps to the right with probability $c^+(\eta_t^n; x_{t-}^n)$, and to the left with complementary probability $c^-(\eta_t^n; x_{t-}^n)$. We remark that the process $\{x_t^n; t \in [0,T]\}$ itself is {\em not} Markovian; if we consider a {\em fixed} realization of the random environment $\{\eta_t^n; t \in [0,T]\}$, then we recover the Markov property for $\{x_t^n;t \in [0,T]\}$, but the resulting evolution is not homogeneous in time. The pair $\{(\eta_t^n; x_t^n); t \in [0,T]\}$ turns out to be an homogeneous Markov process, with values in $\Omega_n \times \bb T_n$ and generated by the operator given by
\begin{equation}\label{jointGen}
L_n f(\eta; x) = n^2 \sum_{y\sim z} \big(f(\eta^{y,z};x)-f(\eta;x)\big) +n\sum_{z=\pm 1} c^z(\eta;x)\big(f(\eta;x+\tfrac{z}{n})-f(\eta;x)\big)
\end{equation}
for any function $f: \Omega_n \times \bb T_n \to \bb R$. At this point, two remarks are in place. Notice that for functions which depend only on $\eta$, this expression coincides with the definition of the generator of the process $\{\eta_t^n;t \in [0,T]\}$, explaining the use of the same notation for both objects. Notice as well that the dynamics of the random walk is speeded-up by $n$. We expect the walk to move with some velocity, in which case it needs to make $n$ jumps in order to cross a region of order 1.

From now on and up to the end of the article, we assume that the random walk starts at $0$: $x_0^n=0$ for any $n \in \bb N$. 

\subsection{The environment as seen by the walker} Let $\{\xi_t^n; t \in [0,T]\}$ be the process with values in $\Omega_n$ defined by $\xi_t^n(z) = \eta_t^n(x_t^n+z)$ for any $z\in \bb T_n$ (in other words, $\xi_t^n = \tau_{x_t^n} \eta_t^n$) and any $t \in [0,T]$. The process $\{\xi_t^n; t \in [0,T]\}$ turns out to be a Markov process and its corresponding generator is given by 
\begin{equation}\label{EPGen}
\mc L_n f(\xi) =  n^2 \sum_{x\sim y} \big(f(\xi^{x,y})-f(\xi)\big) +n\sum_{z=\pm 1} c^z(\xi;0)\big(f(\tau_{\frac{z}{n}} \xi)-f(\xi)\big)
\end{equation}
for any function $f:\Omega_n \to \bb R$. The value of $x_t^n$ can be recovered from the {\em trajectory} $\{\xi_s^n; s \in [0,t]\}$ in the following way. First suppose that $\xi^n$ has at least 2 particles and two empty sites. Let $\{N_t^{n,\pm}; t \in [0,T]\}$ be the number of shifts to the right ($+$) and to the left ($-$) up to time $t$. Then,
\begin{equation*}
x_t^n = \Pi\big(N_t^{n,+}-N_t^{n,-}\big).
\end{equation*}
If there is only one particle or one empty site, $N_t^{n,\pm}$ are similar, but each right (left) shift of $\xi^n_t$ is discarded with probability $n/(n+c^-(\xi^n_t;0))$ ($n/(n+c^+(\xi^n_t;0))$), which is the probability that the observed shift came from the movement of the single particle/empty site. If there are no particles/empty sites, $N_t^{n,\pm}$ are Poisson processes with rate $n c^\pm(\mb 0;0)$ or $n c^\pm(\mb 1;0)$, where $\mb 0$ and $\mb 1$ are the empty and full configurations.

This point of view, the environment as seen by the walker, introduced by Kipnis-Varadhan \cite{KV}, has shown to be very fruitful (see \cite{AveFraJarVol} for an application in this context).

\subsection{The empirical measures}
Let $\bb T = \bb R / \bb Z$ and $\mc M^+(\bb T)$ be the space of positive Radon measures on $\bb T$. For $\mu$ and $\{\mu^n; n \in \bb N\}$ in $\mc M^+(\bb T)$, we say that $\mu^n \to \mu$ if $\int fd\mu^n \to \int f d\mu$ for any continuous function $f: \bb T \to \bb R$. The topology induced on $\mc M^+(\bb T)$ by this convergence is known as the {\em weak topology}, and $\mc M^+(\bb T)$ turns out to be a Polish space under this topology. That is, $\mc M^+(\bb T)$ is completely metrizable and separable under this topology. A possible metric is the following. Let $\{f_N; N \in \bb Z\}$ be a dense subset in $\mc C(\bb T)$. Then, $d: \mc M^+(\bb T) \times \mc M^+(\bb T) \to [0,\infty)$ given by
\begin{equation*}
d(\mu,\nu) = \sum_{N \in \bb Z} \frac{1}{2^{|N|}} \min\Big\{\Big|\int f_Nd(\mu-\nu)\Big|,1\Big\}
\end{equation*}
is the required metric.

For $x \in \bb T_n$, let $\delta_x^n: \bb T \to \bb R$ be defined as
\begin{equation*}
\delta_x^n(y)=\big(1-n|y-x|\big)^+,
\end{equation*}
where $(\cdot)^+$ denotes positive part. Sometimes the functions $\{\delta_x^n; x \in \bb T_n\}$ are called {\em finite elements}. The empirical density of particles is defined as the $\mc M^+(\bb T)$-valued process $\{\pi_t^n; t \in [0,T]\}$ given by
\begin{equation*}
\pi_t^n(dy) = \sum_{x \in \bb T_n} \eta_t^n(x) \delta_x^n(y) dy.
\end{equation*}
Notice that $\pi_t^n$ is absolutely continuous with respect to Lebesgue measure on $\bb T$. We will make the following abuse of notation. We will use $\pi_t^n$ to designate indistinctly the measure $\pi_t^n(dx)$ or its density function $\pi_t^n(\cdot)$ with respect to Lebesgue measure.
We denote by $\pi_t^n(H)$ the integral of a function $H$ with respect to the measure $\pi_t^n(dx)$. At this point, some comments about this definition are in place. It is customary in the literature of interacting particle systems to use $\frac{1}{n}\delta_x$ in place of $\delta_x^n$, where $\delta_x$ is the $\delta$ of Dirac at $x \in \bb T$ (see Chapter 4 of \cite{KL}). We will be interested in scaling limits of the process $\{\pi_t^n; t \in [0,T]\}$. Since the number of particles per site is bounded by $1$ by definition, any limit point of $\pi_t^n(dx)$ must be a measure which is absolutely continuous with respect to Lebesgue measure on $\bb T$, and moreover with Radon-Nikodym 
derivative bounded above by $1$. Therefore, it is natural to modify the customary definition of the empirical measure $\pi_t^n$ in such a way that it satisfies this property for any fixed $n$. This is accomplished by choosing $\delta_x^n(y) = \mathbf{1}(|y-x| \leq \frac{1}{2n})$ (see, e.g., \cite{KOV}). In our case, for topological considerations which will become more transparent later on, it will be convenient to have $\pi_t^n(\cdot)$ a.s.~{\em continuous}, since on one hand we will need this property later on, and on the other hand we will prove that this property is shared by the possible limits of $\pi_t^n$. It is clear that at the level of a law of large numbers, all these definitions of empirical measures are equivalent; this is also the case at the level of large deviation principles, and we adopt this definition in order to simplify the already very technical exposition.

Let us denote by $\mc M_{0,1}^+(\bb T)$ the subset of $\mc M^+(\bb T)$ formed by measures $\mu$ absolutely continuous with respect to Lebesgue measure on $\bb T$, such that $0 \leq \frac{d\mu}{dx} \leq 1$. On $\mc M_{0,1}^+(\bb T)$ we consider the weak topology defined above. Notice that $\mc M_{0,1}^+(\bb T)$ is a compact subset of $\mc M^+(\bb T)$, and $\{\pi_t^n; t \in [0,T]\}$ as defined above is an $\mc M_{0,1}^+(\bb T)$-valued process.

In a similar way, the empirical measure associated to the process $\{\xi_t^n; t \in [0,T]\}$ is defined as the $\mc M_{0,1}^+(\bb T)$-valued process $\{\hat{\pi}_t^n; t \in [0,T]\}$ given by
\begin{equation*}
\hat{\pi}_t^n(dy) = \sum_{x \in \bb T_n} \xi_t^n(x) \delta_x^n(y)dy.
\end{equation*}

\subsection{Hydrodynamic limits}
\label{s1.6}
Let $u_0: \bb T \to [0,1]$ be a given function. We say that a sequence $\{\mu^n; n \in \bb N\}$ of probability measures on $\Omega_n$ is {\em associated} to $u_0$ if for any $f \in \mc C(\bb T)$,
\begin{equation*}
\lim_{n \to \infty} \int \sum_{x \in \bb T_n} \eta(x) \delta_x^n(y) f(y) dy = \int u_0(y) f(y) dy,
\end{equation*}
in distribution with respect to $\{\mu^n; n \in \bb N\}$. In other words, $\{\mu^n; n \in \bb N\}$ is associated to $u_0$ if the empirical measure of particles converges to $u_0(y) dy$, in distribution with respect to $\{\mu^n; n \in \bb N\}$ and in the weak topology on $\mc M^+(\bb T)$. Notice that for any function $u_0: \bb T \to [0,1]$ there is a sequence of measures associated to it. Indeed, define for $n \in \bb N$ and $x \in \bb T_n$,
\begin{equation*}
\rho_x^n = n \!\!\!\!\! \int\limits_{|y-x|\leq \frac{1}{2n}} \!\!\!\!\!u_0(y) dy.
\end{equation*}
Then the product measure $\nu^n_{u_0}$ given by
\begin{equation}
\label{ec18}
\nu^n_{u_0}(\eta) = \prod_{x \in \bb T_n} \big\{ \rho_x^n \eta(x) + (1-\rho_x^n)(1-\eta(x))\big\}
\end{equation}
is associated to $u_0$. These measures will play a role in the derivation of a large deviation principle later on.

For a given Polish space $\mc E$, let $\mc D([0,T]; \mc E)$ denote the space of c\`adl\`ag trajectories from $[0,T]$ to $\mc E$. We consider on  $\mc D([0,T]; \mc E)$ the $J_1$-Skorohod topology. Let $\{\mu^n; n \in \bb N\}$ be fixed. We denote by $\bb P_n$ the distribution of $\{(\eta_t^n; x_t^n); t \in [0,T]\}$ in $\mc D([0,T]; \Omega_n \times \bb T_n)$ with initial distribution $\mu^n \otimes \delta_0$, and we denote by $\bb E_n$ the expectation with respect to $\bb P_n$. The following proposition is classical:

\begin{proposition}
\label{p1}
Fix $u_0: \bb T \to [0,1]$ and let $\{\mu^n; n \in \bb N\}$ be associated to $u_0$. With respect to $\bb P_n$,
\begin{equation*}
\lim_{n \to \infty} \pi_t^n(dx) = u(t,x) dx
\end{equation*}
in distribution with respect to the $J_1$-Skorohod topology on  $\mc D([0,T]; \mc M^+(\bb T))$, where the density $\{u(t,x); t \in [0,T], x \in \bb T\}$ is the solution of the {\em heat equation}
\begin{equation*}
\begin{cases}
\partial_t u(t,x) &= \Delta u(t,x)\\
u(0,\cdot) &=u_0(\cdot).
\end{cases}
\end{equation*}
\end{proposition}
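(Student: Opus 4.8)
The plan is to establish the hydrodynamic limit for the simple symmetric exclusion process by the classical entropy method of Guo--Papanicolaou--Varadhan, as presented in Chapter 4 of \cite{KL}, noting that the random walk component is irrelevant at this level since functions depending only on $\eta$ are insensitive to the walk and the generator $L_n$ restricted to such functions coincides with $L^{ex}_n$. First I would verify tightness of the sequence $\{\pi_t^n; t\in[0,T]\}$ in $\mc D([0,T];\mc M^+(\bb T))$; since the state space $\mc M_{0,1}^+(\bb T)$ is compact, tightness reduces to controlling the oscillations in time, which follows from an application of Aldous' criterion together with a Dynkin-martingale computation showing that for smooth test functions $H$ the map $t\mapsto \pi_t^n(H)$ has uniformly small increments over small time intervals.

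Next I would identify the limit points. For $H\in C^2(\bb T)$, writing $M_t^{n,H} = \pi_t^n(H) - \pi_0^n(H) - \int_0^t L^{ex}_n \pi_s^n(H)\,ds$, a direct computation using \eqref{exGen} and a discrete summation by parts gives $L^{ex}_n \pi_s^n(H) = \pi_s^n(\Delta_n H)$ up to lower-order terms, where $\Delta_n$ is the discrete Laplacian, and $\Delta_n H \to \Delta H$ uniformly. A second-moment (quadratic variation) estimate shows $\bb E_n[(M_t^{n,H})^2] = O(1/n)\to 0$, so any limit point $\pi$ satisfies the weak formulation $\pi_t(H) = \pi_0(H) + \int_0^t \pi_s(\Delta H)\,ds$ for all such $H$. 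Here there is no need for a replacement lemma because the exclusion process is linear (the instantaneous current is already a gradient), which simplifies matters considerably compared to nonlinear conservative dynamics.

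Then I would use the hypothesis that $\{\mu^n\}$ is associated to $u_0$ to conclude $\pi_0 = u_0(x)\,dx$, and invoke uniqueness of weak solutions to the heat equation on $\bb T$ within the class of measures bounded by $1$: any such weak solution is characterized by its Fourier coefficients, which solve $\dot c_k(t) = -4\pi^2 k^2 c_k(t)$, hence the weak formulation determines $\pi_t$ uniquely. Since every subsequential limit equals $u(t,x)\,dx$ with $u$ the unique solution of the stated heat equation, the full sequence converges. The main obstacle, such as it is, is the tightness/Aldous estimate and the quadratic variation bound; these are entirely standard but do require the diffusive speed-up $n^2$ in \eqref{exGen} to make the martingale term vanish — this is exactly the bookkeeping that makes the $n^2$ scaling produce a nontrivial limit, and I would cite \cite[Chapter 4]{KL} for the detailed execution rather than reproduce it.
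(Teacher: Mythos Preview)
Your proposal is correct and in fact goes beyond what the paper does: the paper does not prove Proposition~\ref{p1} at all but simply cites it as a classical result, referring the reader to Chapter~4 of \cite{KL}. Your sketch of the tightness-plus-martingale-identification argument is precisely the content of that reference, and your observation that no replacement lemma is needed because the symmetric exclusion current is already a gradient is exactly the simplification that makes this case elementary.
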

This proposition is what is known in the literature as the {\em hydrodynamic limit} of the process $\{\eta_t^n; t \in [0,T]\}$. A proof of this proposition which is close in spirit to the exposition here can be found in Chapter 4 of \cite{KL}. A similar result was obtained in \cite{AveFraJarVol} for the process $\{\xi_t^n;t \in [0,T]\}$, but before stating this result, we need some notation. Let us define $v^{\pm}: [0,1] \to \bb R$ as
\begin{equation*}
v^{\pm}(\rho) = \int c^{\pm}(\eta;x) \nu_\rho(d\eta).
\end{equation*}
Notice that $v^{\pm}$ do not depend on $x$. Since we have assumed that $c$ is local, $v^{\pm}$ do not depend on $n$ either. Define then $v(\rho) = v^+(\rho) - v^-(\rho)$. The value of $v(\rho)$ can be interpreted as the ``mean-field'' speed of the walk $\{x_t^n; t \in [0,T]\}$ in an environment of density $\rho$, but we point out that this far from clear under which conditions we can assume that this mean-field speed is a good approximation for the real speed of the walk. The following propositions are the main results in \cite{AveFraJarVol}.

\begin{proposition}
\label{p2} 
With respect to $\bb P_n$, 
\begin{equation*}
\lim_{n \to \infty} \hat{\pi}_t^n(dx) = \hat{u}(t,x) dx
\end{equation*}
in law with respect to the $J_1$-Skorohod topology of $\mc D([0,T], \mc M^+(\bb T))$, where the density $\{\hat{u}(t,x); t \in [0,T], x \in \bb T\}$ is the solution of the equation
\begin{equation*}
\begin{cases}
\partial_t \hat{u}(t,x) & = \Delta \hat{u} (t,x) + v(\hat{u}(t,0))\partial_x \hat{u}(t,x)\\
\hat{u}(0,\cdot) &= u_0(\cdot).
\end{cases}
\end{equation*}
\end{proposition}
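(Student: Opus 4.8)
The plan is to derive the hydrodynamic limit for the environment-as-seen-by-the-walker process $\{\xi_t^n\}$ by combining the classical hydrodynamic limit for the bulk exclusion process (Proposition \ref{p1}) with a control on the displacement $x_t^n$ of the walker. The key structural observation is that $\hat\pi_t^n = \tau_{x_t^n}\pi_t^n$, i.e. the empirical measure seen by the walker is just the bulk empirical measure shifted by the walker's current position; so if we can show that $x_t^n$ converges (in the appropriate path sense) to a deterministic trajectory $\varphi_t$, and that $\pi_t^n \to u(t,\cdot)\,dx$ with $u$ solving the heat equation, then $\hat\pi_t^n \to u(t,\varphi_t + \cdot)\,dx$, and one checks that $\hat u(t,x) := u(t,\varphi_t+x)$ solves the stated PDE precisely when $\dot\varphi_t = v(\hat u(t,0)) = v(u(t,\varphi_t))$.

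\textbf{Key steps.} First I would set up the martingale decomposition for the walker's displacement: writing $D_t^n := N_t^{n,+} - N_t^{n,-}$ (so $x_t^n = \Pi(D_t^n)$), the process $D_t^n$ is a compensated sum of jumps with instantaneous drift $n\big(c^+(\xi_t^n;0) - c^-(\xi_t^n;0)\big)$, so that $\frac{1}{n}D_t^n = \int_0^t \big(c^+(\xi_s^n;0) - c^-(\xi_s^n;0)\big)\,ds + M_t^n$ where $M_t^n$ is a martingale whose quadratic variation is $O(1/n)$, hence $M_t^n \to 0$ in $L^2$ uniformly on $[0,T]$. The subtle point is that the integrand $c^\pm(\xi_s^n;0)$ is a local function evaluated at the origin of the shifted configuration, and one cannot directly replace it by its average against a Bernoulli measure of the local density — this requires a \emph{local ergodic theorem} (one-block / two-block estimate, or equivalently the replacement lemma): one shows that time-space averages of $\tau_0 c^\pm(\xi_s^n;\cdot)$ can be replaced by $v^\pm$ evaluated at the empirical density of $\xi_s^n$ in a small macroscopic box around the walker, which in turn (by the hydrodynamic limit and continuity) is close to $v^\pm(\hat u(s,0))$. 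Combined with tightness of $\{\hat\pi^n\}$ and of $\{x_\cdot^n\}$ in the Skorokhod topology, any subinterval limit point $(\hat u, \varphi)$ satisfies the coupled system $\dot\varphi_t = v(\hat u(t,0))$ and $\partial_t \hat u = \Delta\hat u + \dot\varphi_t\,\partial_x\hat u$ weakly; the second equation comes from the analog of the Dynkin-martingale/integration-by-parts computation for $\hat\pi_t^n(H)$ using the generator $\mc L_n$ in \eqref{EPGen}, where the exclusion part contributes the Laplacian (after the usual replacement of the discrete Laplacian and a one-block estimate on the symmetric part) and the shift part $n\sum_{z=\pm1} c^z(\xi;0)(f(\tau_{z/n}\xi)-f(\xi))$ contributes, to leading order, the transport term $v(\hat u(t,0))\partial_x\hat u$.

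\textbf{Uniqueness and conclusion.} Finally I would invoke uniqueness of weak solutions to the coupled PDE--ODE system: given a bounded weak solution $\hat u$ of $\partial_t\hat u = \Delta\hat u + b(t)\partial_x\hat u$ with $b(t) = v(\hat u(t,0))$, the change of variables $w(t,x) := \hat u(t, x - \int_0^t b(s)\,ds)$ solves the plain heat equation with initial datum $u_0$, hence $w = u$ is unique, and then $\varphi_t := \int_0^t b(s)\,ds$ is determined by the fixed-point relation $\varphi_t = \int_0^t v(u(s,\varphi_s))\,ds$, which has a unique solution because $v$ is Lipschitz (it is a polynomial in $\rho$, since $c$ is local and bounded) and $u$ is smooth for $t>0$ with the circle compact. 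Having a unique limit point, the full sequence converges, which is the assertion of Proposition \ref{p2}.

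\textbf{Main obstacle.} The hard part will be the replacement lemma (one-block and two-block estimates) for the walker's drift: unlike in a standard hydrodynamic limit, here the local function $c^\pm$ is sampled at the moving, correlated location $x_t^n$, so one must control time-integrated fluctuations of $\tau_{x_t^n}$-shifted local observables. The standard route is to bound the relevant exponential functionals using the spectral gap of the exclusion process (which is $O(1)$ in macroscopic units after the $n^2$ speed-up) together with an entropy/Feynman-Kac estimate, exploiting that the reference Bernoulli product measures are invariant for the bulk dynamics and that adding the single walker jump (rate $n$, a lower-order perturbation relative to $n^2$) does not destroy these estimates; this is exactly where the strategy borrowed from \cite{JLS} and \cite{KV} enters. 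Everything else — tightness in Skorokhod space, the Dynkin martingale computation, the PDE uniqueness — is routine.
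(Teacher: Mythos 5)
Your proposal is correct and takes essentially the same route as the paper: Proposition \ref{p2} is quoted from \cite{AveFraJarVol}, and the paper's own sketch of the perturbed analogue (Proposition \ref{prop:perturbed-hydrodynamic-limit}) uses exactly your ingredients --- tightness, the martingale decomposition of $\tfrac{1}{n}N_t^{n,\pm}$ with $O(1/n)$ quadratic variation, the local replacement lemma for $c^\pm(\xi_s^n;0)$ in terms of $v^\pm(\hat\pi_s^n(\iota_\epsilon))$ (Lemma \ref{l1}, resting on the entropy-production bound of Proposition \ref{p11} that makes the walker a lower-order perturbation), and uniqueness of the coupled PDE--ODE system via the identity $\hat u(t,x)=u(t,f(t)+x)$. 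The only quibble is that for the simple symmetric exclusion part the Dynkin martingale for $\hat\pi_t^n(H)$ is linear in the occupation variables, so no one-block estimate is needed there; the replacement is only required for the drift term.
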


Let $\{f(t); t \in [0,T]\}$ be the solution of the differential equation 
\begin{equation*}
\begin{cases}
f'(t) &= v(u(t,f(t)))=v(\hat{u}(t,0))\\
f(0) &=0,
\end{cases}
\end{equation*}
with $u$ from Proposition \ref{p1}. The densities $u$ and $\hat u$ are related by the identity $\hat{u}(t,x) = u(t,f(t)+x)$ for any $t \in [0,T]$ and any $x \in \bb T$. In fact, we have the following law of large numbers for $\{x_t^n; t \in [0,T]\}$.

\begin{proposition}
\label{p3}
With respect to $\bb P_n$, 
\begin{equation*}
\lim_{n \to \infty} x_t^n = f(t)
\end{equation*}
in distribution with respect to the $J_1$-Skorohod topology on  $\mc D([0,T]; \bb T)$.
\end{proposition}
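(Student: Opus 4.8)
The plan is to obtain the law of large numbers for $\{x^n_t\}$ from the hydrodynamic limit of the environment seen by the walker, Proposition~\ref{p2}, via a martingale decomposition of the walk followed by a local-equilibrium replacement of its jump rates. This is in essence the argument carried out in \cite{AveFraJarVol}, whose strategy goes back to \cite{KV}.

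First I would write down the Dynkin decomposition of the displacement. Since the walk jumps by $\pm\frac1n$ at rate $n$, a right jump at time $t$ occurring with probability $c^+(\xi^n_{t-};0)$ and a left jump with probability $c^-(\xi^n_{t-};0)$, the processes $N^{n,\pm}$ counting right/left shifts have predictable intensity $n\,c^{\pm}(\xi^n_{t};0)$ and (by construction) no common jumps, so from $x^n_t=\Pi(N^{n,+}_t-N^{n,-}_t)$ one gets
\begin{equation*}
x^n_t=\int_0^t b(\xi^n_s)\,ds+M^n_t,\qquad b(\xi):=c^+(\xi;0)-c^-(\xi;0),
\end{equation*}
with $M^n$ a mean-zero martingale whose predictable quadratic variation is $\tfrac1n\int_0^t(c^++c^-)(\xi^n_s;0)\,ds\le t/n$; by Doob's inequality $\bb E_n[\sup_{t\le T}|M^n_t|^2]\le C/n\to0$, so $M^n$ is uniformly negligible. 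Since $c$ is local, $b$ is bounded, hence $t\mapsto\int_0^t b(\xi^n_s)\,ds$ is uniformly Lipschitz; together with the martingale bound this makes $\{x^n_\cdot\}$ tight in $\mc D([0,T];\bb T)$ with every limit point supported on continuous paths (on which the $J_1$ topology is the uniform one).

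The crux is to replace the microscopic drift $b$ by a function of the macroscopic density. Because $\int b\,d\nu_\rho=v^+(\rho)-v^-(\rho)=v(\rho)$, the one-block estimate — applied to the law of the environment as seen by the walker, for which the relative entropy of $\mathrm{law}(\xi^n_t)$ against a Bernoulli product measure such as \eqref{ec18} is $O(n)$ — yields, for each $t\in[0,T]$,
\begin{equation*}
\lim_{\varepsilon\to0}\ \limsup_{n\to\infty}\ \bb E_n\Big[\Big|\int_0^t\big(b(\xi^n_s)-v(\langle\hat\pi^n_s\rangle_\varepsilon(0))\big)\,ds\Big|\Big]=0,
\end{equation*}
where $\langle\hat\pi^n_s\rangle_\varepsilon(0)$ is the average of the density $\hat\pi^n_s$ over a box of radius $\varepsilon$ about $0$. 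By Proposition~\ref{p2}, $\sup_{s\le T}d(\hat\pi^n_s,\hat u(s,\cdot)\,dx)\to0$ in probability (Skorohod convergence to a continuous path), so $\langle\hat\pi^n_s\rangle_\varepsilon(0)\to\frac1{2\varepsilon}\int_{-\varepsilon}^{\varepsilon}\hat u(s,y)\,dy$, which tends to $\hat u(s,0)$ as $\varepsilon\to0$ for a.e.\ $s$; with the continuity of $v$ and dominated convergence this gives $\int_0^t b(\xi^n_s)\,ds\to\int_0^t v(\hat u(s,0))\,ds=f(t)$ in probability. Combined with the martingale estimate, $x^n_t\to f(t)$ in probability for each $t$, and the equicontinuity from the tightness step upgrades this to convergence of $\{x^n_\cdot\}$ to the deterministic path $f$ in the $J_1$-Skorohod topology.

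The main obstacle is the replacement step: it requires a local-equilibrium statement for the environment \emph{as seen by the walker} rather than for the bare exclusion process — concretely, an entropy bound for $\mathrm{law}(\xi^n_t)$ (or for the joint law of the pair) against a Bernoulli product reference measure — which is precisely the delicate ingredient established in \cite{AveFraJarVol}. The martingale estimate, the tightness, and the passage from fixed-time marginals to path convergence are all routine.
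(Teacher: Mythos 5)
Your proposal is correct and takes essentially the same approach as the paper: Proposition \ref{p3} is quoted from \cite{AveFraJarVol}, and the paper's own sketch of the perturbed analogue (Proposition \ref{prop:perturbed-hydrodynamic-limit}) follows exactly your outline --- Dynkin decomposition with a martingale of quadratic variation $O(1/n)$, tightness, and a replacement lemma for the environment as seen by the walker combined with the hydrodynamic limit of $\hat\pi^n$. The only cosmetic imprecision is attributing the replacement up to the macroscopic $\varepsilon$-box average to the one-block estimate alone, whereas it also needs the two-blocks estimate; both are packaged in the local superexponential estimate (Lemma \ref{l1}) that plays this role in the paper.
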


\section{Main results: large deviations}
\label{s3}
Propositions \ref{p1} and \ref{p3} can be understood as a functional law of large numbers for the pair of processes $\{(\pi_t^n, x_t^n); t \in [0,T]\}$. Our aim is to establish a large deviation principle for the process $\{x_t^n; t \in [0,T]\}$, Theorem \ref{t1} below.

\subsection{Topological considerations}\label{topo}
Let us notice that the $J_1$-Skorohod topology coincides with the uniform topology when restricted to the space of continuous functions. This topology is not the only one with this property. Indeed, in the original work of Skorohod \cite{S}, four different topologies are introduced on the space $\mc D([0,T]; \mc E)$ with this property, and such that the space $\mc D([0,T]; \mc E)$ is Polish with respect to these topologies. Let us recall the decomposition $x_t^n = \Pi(N_t^{n,+}-N_t^{n,-})$. Since $N_t^{n,+}+N_t^{n,-}$ is just a standard Poisson process speeded-up by $n$, an immediate corollary of Proposition \ref{p3} is that
\begin{equation*}
\lim_{n \to \infty} \frac{N_t^{n,\pm}}{n} = \frac{t \pm \hat{f}(t)}{2}
\end{equation*}
in distribution with respect to the $J_1$-Skorohod topology on  $\mc D([0,T]; \bb R)$, where $\{\hat{f}(t); t \in [0,T]\}$ is the canonical lifting of $\{f(t); t \in [0,T]\}$ from $\bb T$ to $\bb R$. In fact, the convergences of the processes $\{x_t^n; t \in [0,T]\}$ and $\{\frac{1}{n}N_t^{n,+}; t \in [0,T]\}$ are equivalent, once we have the law of large numbers for the standard Poisson process. Notice that the process $\{N_t^{n,+}; t \in [0,T]\}$ is {\em increasing}. Therefore, maybe the $J_1$-Skorohod topology is not the most suitable one. It turns out that in order to exploit the fact that $\{N_t^{n,+}; t \in [0,T]\}$ is increasing, we can use the {\em weak topology} in the following way. Let us denote by $\omega_{\pm}^n(dt)$ the measure on  $[0,T]$ defined by $\omega^n_{\pm}((s,t]) = \frac{1}{n}(N_t^{n,\pm} -N_s^{n,\pm})$ for any $s<t \in [0,T]$. Then, convergence of $\{\frac{1}{n}N_t^{n,\pm}; t \in [0,T]\}$ to $\{\frac{1}{2}(t\pm\hat{f}(t)); t \in [0,T]\}$ is equivalent to convergence of the sequence of positive Radon 
measures 
$\{\omega^n_{\pm}; n \in \bb N\}$ to the measure $\frac{1}{2}(1\pm\hat{f}'(t))dt$, with respect to the weak topology of $\mc M^+([0,T])$. We will adopt this last point of view. Notice that in order to recover the process $\{x_t^n; t \in [0,T]\}$, we need both processes $\{N_t^{n,\pm};t \in [0,T]\}$, or equivalently, both measures $\{\omega^n_{\pm}\}$. Therefore, if needed, we can consider the process $\{x_t^n; t \in [0,T]\}$ as an element of the space $\mc M^+([0,T]) \times \mc M^+([0,T])$ equipped with the weak topology. The main advantage of this point of view is the characterization of compact sets, which is very simple on  $\mc M^+([0,T])$: a set  $\mc K \subseteq \mc M^+([0,T])$ is relatively compact if and only if $\sup_{\mu \in \mc K} \mu([0,T])<+\infty$. Further topological considerations will be introduced at the occurrence in the proof of the large deviation principle.

\subsection{Large deviation principle}
We start by recalling what a large deviation principle is. 
Since we are going to state several large deviation principles, let us define it in full generality.
Let $\mc E$ be a Polish space. Given a function $\mc I: \mc E \to [0,\infty]$, we call it {\em rate function} if it is lower semi-continuous, that is, the set $\{x \in \mc E; \mc I(x) \leq M\}$ is closed for any $M \in [0,\infty)$. We say that the rate function $\mc I$ is {\em good} if the sets $\{x \in \mc E; \mc I(x) \leq M\}$ are {\em compact} for any $M \in [0,\infty)$. A sequence $\{X_n; n \in \bb N\}$ of $\mc E$-valued random variables defined in some probability space $(E,\mc F,P)$ satisfies a large deviation principle with good rate function $\mc I$ if
\begin{itemize}
\item[i)] for any open set $\mc A \subseteq \mc E$,
\begin{equation*}
\varliminf_{n \to \infty} \frac{1}{n} \log P(X_n \in \mc A) \geq -\inf_{x \in \mc A} \mc I(x),
\end{equation*}
\item[ii)] for any closed set $\mc C \subseteq \mc E$,
\begin{equation*}
\varlimsup_{n \to \infty} \frac{1}{n} \log P(X_n \in \mc C) \leq -\inf_{x \in \mc C} \mc I(x).
\end{equation*}
\end{itemize}

\subsection{The initial distribution of particles}
In Section \ref{s1.6}, we saw that in order to obtain the hydrodynamic limit of the environment process, the initial distribution of particles must be associated to some profile $u_0$. It turns out that in order to obtain a large deviation principle for the environment process, it is necessary (but far from sufficient) to understand the large deviations of the initial distribution of particles. Let $u_0$ be a given initial profile. For simplicity, we assume that $u_0$ is continuous and that there exists $\epsilon >0$ such that $u_0 \in [\epsilon, 1-\epsilon]$. Recall the definition of the measures $\{\nu_{u_0}^n; n \in \bb N\}$ given in Section \ref{s1.6}. With respect to $\{\nu_{u_0}^n; n \in \bb N\}$, the empirical measure $\pi_0^n$ converges in distribution to the measure $u_0(x)dx$, and a large deviation principle for the sequence $\{\pi_0^n; n \in \bb N\}$ is not difficult to obtain. Recall that we consider $\pi_0^n$ as an element in $\mc M_{0,1}^+(\bb T)$. Let $v_0(x)dx$ be an element of $\mc M_{0,1}^+(\bb T)$. This imposes the restriction $0 \leq v_0(x) \leq 
1$ for any $x \in \bb T$. Define
\begin{equation}\label{RateFn1}
h(v_0|u_0) := \int_{\bb T} \Big\{ u_0(x) \log\Big(\tfrac{u_0(x)}{v_0(x)}\Big) + (1-u_0(x))\log\tfrac{1-u_0(x)}{1-v_0(x)}\Big\} dx.
\end{equation}
The large deviations of the initial distribution of particles is given by the following proposition (see e.g. \cite{KL}, Lemma 5.2, Chapter 10). 
\begin{proposition}
\label{p4} The sequence $\{\pi_0^n; n \in \bb N\}$ satisfies a large deviation principle with respect to the weak topology on  $\mc M_{0,1}^+(\bb T)$ with rate function $h$.
\end{proposition}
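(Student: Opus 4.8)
The plan is to recognise this as an infinite-dimensional Cram\'er theorem. Since $\nu_{u_0}^n$ is a product measure and $\pi_0^n=\sum_{x\in\bb T_n}\eta(x)\,\delta_x^n(\cdot)\,dx$ is a linear functional of the configuration, the logarithmic moment generating functional of $\pi_0^n$ factorises over sites, and since $\mc M_{0,1}^+(\bb T)$ is compact the sequence $\{\pi_0^n;n\in\bb N\}$ is automatically exponentially tight. It therefore suffices to verify the hypotheses of the abstract G\"artner--Ellis theorem (equivalently, Bryc's inverse of Varadhan's lemma) for measures on the compact convex set $\mc M_{0,1}^+(\bb T)\subseteq\mc C(\bb T)^*$: that for every $H\in\mc C(\bb T)$ the limit
\begin{equation*}
\Lambda(H):=\lim_{n\to\infty}\frac1n\log\bb E_{\nu_{u_0}^n}\big[e^{n\pi_0^n(H)}\big]
\end{equation*}
exists, is finite, convex and G\^ateaux differentiable, and that its Legendre transform equals $h(\,\cdot\,|u_0)$. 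Goodness of the rate function is then immediate, $\mc M_{0,1}^+(\bb T)$ being compact and any rate function obtained this way being lower semicontinuous.

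To compute $\Lambda$, put $a_x^n:=\int_{\bb T}H(y)\delta_x^n(y)\,dy$, so that $\pi_0^n(H)=\sum_{x\in\bb T_n}\eta(x)a_x^n$; since $\delta_x^n$ is an approximate identity of total mass $\tfrac1n$ and $H$ is continuous, $na_x^n\to H(x)$ uniformly in $x$. Independence of the coordinates under $\nu_{u_0}^n$ yields
\begin{equation*}
\frac1n\log\bb E_{\nu_{u_0}^n}\big[e^{n\pi_0^n(H)}\big]=\frac1n\sum_{x\in\bb T_n}\log\big(1-\rho_x^n+\rho_x^n e^{na_x^n}\big),
\end{equation*}
and since $\rho_x^n\to u_0(x)$ uniformly (here continuity of $u_0$ is used) the right-hand side is a Riemann sum converging to $\Lambda(H)=\int_{\bb T}\log\big(1-u_0(x)+u_0(x)e^{H(x)}\big)\,dx$. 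Because $u_0\in[\epsilon,1-\epsilon]$, one checks at once that $\Lambda$ is finite on all of $\mc C(\bb T)$, convex, and G\^ateaux differentiable, with gradient the measure of density $\tfrac{u_0(x)e^{H(x)}}{1-u_0(x)+u_0(x)e^{H(x)}}\in[0,1]$; in particular $D\Lambda(H)\in\mc M_{0,1}^+(\bb T)$.

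For the Legendre transform, $\Lambda^*(\mu)=\sup_{H\in\mc C(\bb T)}\{\mu(H)-\Lambda(H)\}$ is $+\infty$ unless $\mu=v_0(x)\,dx$ with $0\le v_0\le1$; on such $\mu$ one interchanges the supremum with the spatial integral to get
\begin{equation*}
\Lambda^*(v_0)=\int_{\bb T}\sup_{h\in\bb R}\big\{v_0(x)h-\log\big(1-u_0(x)+u_0(x)e^{h}\big)\big\}\,dx,
\end{equation*}
and the inner supremum is the Cram\'er rate function of a $\mathrm{Ber}(u_0(x))$ variable, which --- substituting the optimiser $e^{h}=\tfrac{v_0(x)(1-u_0(x))}{u_0(x)(1-v_0(x))}$ and simplifying, with the usual conventions when $v_0(x)\in\{0,1\}$ --- is exactly the integrand of \eqref{RateFn1}. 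Hence $\Lambda^*=h(\,\cdot\,|u_0)$ and the LDP follows.

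The step I expect to be most delicate is the interchange of the $\mc C(\bb T)$-supremum with the spatial integral, since the pointwise optimiser $h^*(x)=\log\tfrac{v_0(x)(1-u_0(x))}{u_0(x)(1-v_0(x))}$ blows up as $v_0(x)\to0$ or $1$ and so cannot be matched by a genuinely continuous test field; one handles this by truncating $h^*$ at levels $\pm M$, approximating the truncated field uniformly by continuous functions (the bound $u_0\in[\epsilon,1-\epsilon]$ keeping everything controlled) and letting $M\to\infty$ by monotone convergence. Everything else is routine. The variational identification can also be avoided by a direct change of measure: for the upper bound one tilts $\nu_{u_0}^n$ to $\nu_{v_0}^n$ and uses the entropy inequality together with the explicit computation of the relative entropy between $\nu_{v_0}^n$ and $\nu_{u_0}^n$, which to leading order in $n$ reduces to \eqref{RateFn1}, while for the lower bound one uses that under $\nu_{v_0}^n$ the empirical measure $\pi_0^n$ converges to $v_0(x)\,dx$ by the law of large numbers, together with a law of large numbers for $\tfrac1n\log\tfrac{d\nu_{v_0}^n}{d\nu_{u_0}^n}$. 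Both routes are classical; see e.g.\ \cite{KL}, Ch.~10.
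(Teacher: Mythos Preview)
The paper does not actually prove Proposition~\ref{p4}; it simply cites \cite{KL}, Lemma~5.2, Chapter~10. Your argument via the abstract G\"artner--Ellis/Bryc route is a correct and standard way to obtain this result, and the alternative change-of-measure route you sketch at the end is precisely the one carried out in \cite{KL}. In that sense you have supplied strictly more than the paper does.

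One small point worth flagging: the Cram\'er rate you compute for a $\mathrm{Ber}(u_0(x))$ variable at the value $v_0(x)$ is
\[
v_0(x)\log\frac{v_0(x)}{u_0(x)}+(1-v_0(x))\log\frac{1-v_0(x)}{1-u_0(x)},
\]
i.e.\ the relative entropy of $v_0$ with respect to $u_0$. This is the correct LDP rate. However, the display \eqref{RateFn1} as written in the paper has the roles of $u_0$ and $v_0$ interchanged inside the integrand, so your sentence ``is exactly the integrand of \eqref{RateFn1}'' is literally false against the paper's formula; the discrepancy is a typo in the paper, not an error in your mathematics. It may be worth noting this explicitly rather than silently identifying your (correct) expression with the paper's (misprinted) one.
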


\subsection{Large deviation principle for the environment}
A large deviation principle for the process $\{\pi_t^n; t \in [0,T]\}$ has been obtained in \cite{KOV}. 
Let us recall this result. For $H: [0,T] \times \bb T \to \bb R$ of class $\mc C^{1,2}$ and $\{\pi_t; t \in [0,T]\}$ in $\mc D([0,T]; \mc M_{0,1}^+(\bb T))$, define
\begin{equation}\label{RateFn2}
\begin{split}
J(H;\pi) := \pi_T(H_T)
		&- \pi_0(H_0) - \int_0^T \pi_t\big(\partial_t H_t + 2\Delta H_t\big)  dt\\
		&\quad-\int_0^T \int \big( \nabla H_t(x)\big)^2 \pi_t(x)\big(1-\pi_t(x)\big) dxdt,
\end{split}
\end{equation}
and set
\begin{equation*}
\mc I_{\mathrm{ex}}(\pi) := h(\pi_0|u_0) + \nop_{H \in \mc C^{1,2}} J(H;\pi).
\end{equation*}
The following proposition is the main result in \cite{KOV}.
\begin{proposition}
\label{p5}
The process $\{\pi_t^n; t \in [0,T]\}$ satisfies a large deviation principle with good rate function $\mc I_{\mathrm{ex}}$ with respect to the $J_1$-Skorohod topology on the path space $\mc D([0,T]; \mc M^+_{0,1}(\bb T))$.
\end{proposition}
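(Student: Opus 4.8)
\emph{Strategy.} The plan is to follow the Donsker--Varadhan tilting scheme for conservative dynamics, in the form used in \cite{KOV}. The first step is to introduce, for each $H\in\mc C^{1,2}([0,T]\times\bb T)$, the exponential martingale
\[
M_t^{n,H} = \exp\Big\{ n\pi_t^n(H_t) - n\pi_0^n(H_0) - \int_0^t e^{-n\pi_s^n(H_s)}\big(\partial_s + L^{ex}_n\big)e^{n\pi_s^n(H_s)}\,ds\Big\},
\]
which is mean-one under $\bb P_n$. Since exchanges act on neighbouring sites and $y\mapsto\delta_x^n(y)$ is Lipschitz, a Taylor expansion of the generator term shows that $n^{-1}\log M_T^{n,H} = J(H;\pi^n) + o(1)$, with $J$ the functional in \eqref{RateFn2}, the error being uniform in $n$, once the microscopic occupation products $\eta^n_s(x)\eta^n_s(x+\tfrac1n)$ have been replaced by $\pi^n_s(1-\pi^n_s)$; this replacement is the standard one-block/two-blocks lemma, proved by a superexponential estimate.

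\emph{Upper bound.} Next I would use $\bb E_n[M_T^{n,H}]=1$ together with the LDP for $\{\pi_0^n\}$ with rate $h(\cdot\,|u_0)$ under the initial law $\nu_{u_0}^n$ (Proposition \ref{p4}): an exponential Chebyshev inequality applied over a countable dense family of test functions, followed by the usual minimax argument, gives
\[
\varlimsup_{n\to\infty}\frac1n\log\bb P_n\big(\pi^n\in\mc K\big)\;\le\;-\inf_{\pi\in\mc K}\mc I_{\mathrm{ex}}(\pi)
\]
for every compact $\mc K\subseteq\mc D([0,T];\mc M^+_{0,1}(\bb T))$. To pass to closed sets I would prove exponential tightness; because $\mc M^+_{0,1}(\bb T)$ is compact, tightness at fixed times is automatic, so it only remains to control the Skorohod modulus of $t\mapsto\pi^n_t(H)$, which follows by applying $M^{n,\lambda H}$ with $H$ time-independent and optimizing over $\lambda$, as in Chapter 10 of \cite{KL}.

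\emph{Lower bound.} For the lower bound I would first treat trajectories $\pi_t(dx)=\rho(t,x)\,dx$ with $\rho$ smooth, bounded away from $0$ and $1$, and equal to the solution of the weakly asymmetric equation obtained from the hydrodynamic equation of Proposition \ref{p1} by adding the field $\nabla H$ for a smooth $H$; for such $\pi$ the supremum defining $\mc I_{\mathrm{ex}}$ is attained at that $H$. Tilting $\bb P_n$ by $M_T^{n,H}$ gives the law $\bb P_n^H$ of a weakly asymmetric exclusion process with hydrodynamic limit $\rho$, and on a neighbourhood of $\pi$ one has $\tfrac1n\log\tfrac{d\bb P_n}{d\bb P_n^H}(\pi^n)\to-\mc I_{\mathrm{ex}}(\pi)$ in $\bb P_n^H$-probability, which yields the lower bound at $\pi$. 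The general case then reduces to showing that every $\pi$ with $\mc I_{\mathrm{ex}}(\pi)<\infty$ is the Skorohod limit of such smooth non-degenerate $\pi^{(k)}$ with $\mc I_{\mathrm{ex}}(\pi^{(k)})\to\mc I_{\mathrm{ex}}(\pi)$; I would build these by regularizing $\rho$ with the heat semigroup in space and time, pushing it into $(\epsilon,1-\epsilon)$, and using convexity of the quadratic cost.

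\emph{Goodness and main obstacle.} Lower semicontinuity of $\mc I_{\mathrm{ex}}$ is immediate from its form as a supremum of continuous functionals plus lower semicontinuity of $h$. For compactness of $\{\mc I_{\mathrm{ex}}\le M\}$ I would invoke the \emph{energy estimate}: $\mc I_{\mathrm{ex}}(\pi)<\infty$ forces $\rho\in L^2([0,T];H^1(\bb T))$, and testing $J$ against fields linear in time gives equicontinuity of $t\mapsto\pi_t$; together these give relative compactness in $\mc D([0,T];\mc M^+_{0,1}(\bb T))$, and lower semicontinuity closes the level set. The hard part is this energy estimate and, above all, turning it into recovery sequences for arbitrary finite-rate trajectories, since the term $\int(\nabla H)^2\,\pi_t(1-\pi_t)$ in $J$ couples $\nabla H$ with a density that is a priori only measurable.
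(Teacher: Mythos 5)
The paper does not prove Proposition \ref{p5} at all: it is quoted as the main result of \cite{KOV} (see also Chapter 10 of \cite{KL}), and your sketch is precisely the argument of that reference --- exponential martingales $M^{n,H}$, superexponential one-block/two-blocks replacement, Chebyshev plus minimax for the upper bound, exponential tightness from the compactness of $\mc M^+_{0,1}(\bb T)$ and time-regularity of $\pi^n_t(H)$, tilting to the weakly asymmetric process for the lower bound, and a regularization/density argument for general finite-rate trajectories. Your outline is correct, matches the cited proof, and rightly singles out the energy estimate and the construction of recovery sequences as the genuinely delicate steps.
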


\subsection{Large deviations for the random walk}
For each function $x:[0,T] \to \bb T$ of finite variation with $x_0=0$ and each $\pi: [0,T] \to \mc M_{0,1}^+(\bb T)$ c\`adl\`ag, let us define
\begin{align}\label{RateFn3}
 \mc I_{\mathrm{rw}}(x|\pi) &= 
 \int_0^T \Big\{ a_{x,\pi}(t) x'_t -\sum_{z=\pm} v^z(\pi_t(x_t))(e^{za_{x,\pi}(t)}-1)\Big\} dt,	\text{ where} \\
a_{x,\pi}(t) &= 
\begin{cases}
\log \frac{x'_t+\sqrt{(x'_t)^2+4v^+(\pi_t(x_t))v^-(\pi_t(x_t))}}{2v^+(\pi_t(x_t))}, &v^+(\pi_t(x_t))v^-(\pi_t(x_t))>0,\\
\log\frac{|x_t'|}{v^+(\pi_t(x_t))},&\hspace{-4em}v^+(\pi_t(x_t))v^-(\pi_t(x_t))=0,\ x_t'>0, \\
-\log\frac{|x_t'|}{v^-(\pi_t(x_t))},&\hspace{-4em}v^+(\pi_t(x_t))v^-(\pi_t(x_t))=0,\ x_t'<0, \\
-\infty,&\hspace{-4em}v^+(\pi_t(x_t))v^-(\pi_t(x_t))=0,\ x_t'=0,\ v^+(\pi_t(x_t))>0, \\
\infty,&\hspace{-4em}v^+(\pi_t(x_t))v^-(\pi_t(x_t))=0,\ x_t'=0,\ v^+(\pi_t(x_t))=0,
\end{cases}
\end{align}
if $x$ is absolutely continuous and $x \mapsto \pi_t(x)$ is continuous at $x_t$ for a.e.~$t \in [0,T]$. Otherwise, or if one of the three integrals
\begin{align}
&\int_0^T |x'_t|\log^+|x_t'| \,dt  \quad\text{or}\quad \int_0^T (x'_t)^z\log^+\left((v^z(\pi_t(x_t)))^{-1}\right) \,dt , \ z=\pm,
\end{align}
is infinite, then $\mc I_{\mathrm{rw}}(x|\pi) = \infty$, where $f^+=\max(f,0)$ and $f^-=\max(-f,0)$ are the positive and negative part of a function (note that due to a collision of notation, $v^+$ and $v^-$ are separate functions, not positive and negative part of some function $v$).

Our main result is the following.

\begin{theorem}
\label{t1}
The sequence $\{x_t^n; t \in [0,T]\}_{n \in \bb N}$ satisfies a large deviation principle with good rate function
\begin{equation*}
\mc I(x) = \inf_{\pi} \big\{ \mc I_{\mathrm{rw}}(x|\pi) + \mc I_{\mathrm{ex}}(\pi)\big\}.
\end{equation*} 
\end{theorem}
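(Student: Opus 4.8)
The plan is to deduce Theorem~\ref{t1} from a joint large deviation principle for the pair $\{(\hat\pi_t^n,\omega^n_+,\omega^n_-);t\in[0,T]\}$ --- the environment as seen by the walker together with the two signed occupation measures of the walk --- via the contraction principle, exactly as indicated in the introduction. Thus the real content is a joint LDP (the object called Theorem~\ref{t2} in the excerpt), whose rate function should be shown to equal $\mc I_{\mathrm{rw}}(x|\pi)+\mc I_{\mathrm{ex}}(\pi)$ once one translates between the ``environment as seen by the walker'' description $(\hat\pi,\omega_\pm)$ and the ``static environment plus trajectory'' description $(\pi,x)$. Concretely, I would first establish exponential tightness: for the environment component this is the standard Kipnis--Olla--Varadhan estimate (Proposition~\ref{p5} already gives a good rate function), and for the walk component exponential tightness is immediate from the characterization of compact sets in $\mc M^+([0,T])$ --- one only needs $\omega^n_\pm([0,T])=\tfrac1n N_T^{n,\pm}$ to be superexponentially bounded, which follows from a comparison with a rate-$n$ Poisson process. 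This reduces the problem to proving matching upper and lower bounds.

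For the upper bound I would follow the Donsker--Varadhan tilting strategy. In Section~\ref{Martingales} one introduces the exponential martingales
$$
M_t^{n,H,g}=\exp\Big\{n\Big(\langle H,\hat\pi^n\rangle\text{-type terms}+\sum_{z=\pm}\int_0^T g_z(t)\,dN_t^{n,z}\Big)-\int_0^t e^{-n(\cdots)}L_n e^{n(\cdots)}\,ds\Big\},
$$
built from test functions $H\in\mc C^{1,2}$ acting on the environment and bounded functions $g_\pm$ conjugate to the jump counts. The superexponential lemma of Section~\ref{s4} lets one replace the compensator term by an explicit functional of $(\hat\pi^n,\omega^n_\pm)$ up to a superexponentially negligible error: the quadratic Gaussian part $\int(\nabla H)^2\pi(1-\pi)$ coming from the exclusion generator, plus the Poissonian part $\sum_z v^z(\pi_t(x_t))(e^{z a_z(t)}-1)$ coming from the rate-$n$ walk generator, evaluated via the local-equilibrium/replacement lemma that identifies the local density at the walker with $\pi_t(x_t)$. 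Combining over a countable dense family of $(H,g_\pm)$, using the energy estimate of Section~\ref{s5} to restrict to measures with finite energy (so that $\pi_t(\cdot)$ is a genuine continuous function that can be evaluated at $x_t$), and optimizing over $(H,g_\pm)$, yields the upper bound with rate function $\sup_{H}J(H;\pi)+h(\pi_0|u_0)+\sup_{g_\pm}\{\text{walk functional}\}$; the Legendre computation over $g_\pm$ produces precisely the explicit formula~\eqref{RateFn3} for $\mc I_{\mathrm{rw}}$, with $a_{x,\pi}(t)$ the maximizer.

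For the lower bound I would, for each sufficiently regular target pair $(\pi,x)$ with $\mc I_{\mathrm{rw}}(x|\pi)+\mc I_{\mathrm{ex}}(\pi)<\infty$, exhibit a perturbed dynamics --- a time-inhomogeneous exclusion process tilted by the optimal $H$ together with a walk whose jump rates are tilted by $e^{\pm a_{x,\pi}(t)}$ --- under which $(\hat\pi^n,\omega^n_\pm)$ converges in probability to $(\pi,x)$ (this is the second use of the superexponential lemma / hydrodynamics for perturbed dynamics), and then use the entropy inequality together with the martingale change of measure to bound the relative entropy cost by exactly $\mc I_{\mathrm{rw}}(x|\pi)+\mc I_{\mathrm{ex}}(\pi)$; a density argument extends the bound from this class of ``nice'' pairs to arbitrary ones. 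Finally, the contraction principle along the map $(\pi,x)\mapsto x$ (equivalently $(\hat\pi,\omega_\pm)\mapsto x$) gives Theorem~\ref{t1} with $\mc I(x)=\inf_\pi\{\mc I_{\mathrm{rw}}(x|\pi)+\mc I_{\mathrm{ex}}(\pi)\}$, and goodness of $\mc I$ follows from goodness of the joint rate function plus continuity of the contraction map.

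The main obstacle is matching the upper and lower bounds in the presence of the non-convex mixed Gaussian--Poissonian entropy cost: the functional one obtains naturally from the upper bound is a supremum over test functions of an expression that is \emph{not} a priori lower semicontinuous or convex in $(\pi,x)$, and without the convexity assumption used in \cite{BL} one cannot directly identify it with the variational formula appearing in the lower bound. This is where the Orlicz-space machinery of Section~\ref{s7.3} enters: one must show that the variational problem defining $\mc I_{\mathrm{rw}}(x|\pi)$ is well-posed --- that the optimal tilt $a_{x,\pi}(t)$ lives in the appropriate Orlicz space dual to the one controlling $x'_t$ and $\log(v^z)^{-1}$ via the integrability conditions in~\eqref{RateFn3} --- so that the Legendre duality between the ``$g_\pm$ side'' (bounded functions) and the ``$x'$ side'' closes without a duality gap, and the three finiteness integrals are exactly the conditions under which the formula is the true rate. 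Handling the degenerate cases $v^+v^-=0$ (walk forced in one direction, giving $a=\pm\infty$) and the possible non-continuity of $\pi_t$ at $x_t$ requires care and is the reason $\mc I_{\mathrm{rw}}$ is defined piecewise with an explicit $\{+\infty\}$ clause.
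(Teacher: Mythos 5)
Your proposal is correct and follows essentially the same route as the paper: Theorem~\ref{t1} is obtained from the joint LDP of Theorem~\ref{t2} by the contraction principle, with the upper bound via Donsker--Varadhan tilting, the superexponential and energy estimates, and the minimax lemma, and the lower bound via the perturbed hydrodynamics, relative entropy, and a density argument. The only slight imprecision is the role of the Orlicz machinery: in the paper it is not needed to close the Legendre duality defining $\mc I_{\mathrm{rw}}$ (that is done by direct truncation in Section~\ref{RateFn}), but rather to prove continuity of $\mc I_{\mathrm{rw}}$ along the approximating sequences in the lower bound (Lemma~\ref{approx}).
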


Actually, this result will be a consequence of a large deviation principle for the {\em pair} $\{(\pi_t^n; x_t^n); t \in [0,T]\}$.

\begin{theorem}
\label{t2}
The sequence $\{(\pi_t^n; x_t^n); t \in [0,T]\}$ satisfies a large deviation principle with good rate function $\mc I_{\mathrm{rw}}(x|\pi)+\mc I_{\mathrm{ex}}(\pi)$.
\end{theorem}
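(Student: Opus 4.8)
The plan is to prove Theorem \ref{t2} by the standard Donsker-Varadhan tilting strategy, adapted to the pair (environment, walk), and then derive Theorem \ref{t1} from it by the contraction principle applied to the map $(\pi,x) \mapsto x$ (which is continuous in the topologies at hand, and $\mc I_{\mathrm{ex}}$ good together with the lower-semicontinuity of $\mc I_{\mathrm{rw}}$ will give that the infimum is attained and the contracted rate function is good). So the real content is Theorem \ref{t2}, and I would organize it as an upper bound plus a matching lower bound, with a preliminary reduction to a compact ``good'' set of trajectories via exponential tightness and energy estimates.

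First I would set up the exponential martingales: for test functions $H \in \mc C^{1,2}([0,T]\times\bb T)$ (coupled to the density field) and $G$ a bounded predictable functional driving the signed Poisson clocks of the walk (equivalently two functions $g^\pm$ on $[0,T]$ controlling the rates of $N^{n,\pm}$), form $\exp\{n(\dots)\}$ martingales from the generator $L_n$ in \eqref{jointGen}. The superexponential lemma (Section \ref{s4} in the paper's plan) lets me replace the microscopic additive functionals appearing in the exponent by their macroscopic counterparts — $\pi^n_t(H_t)$, $\pi^n_t(\Delta H_t)$, the quadratic correction $\int (\nabla H_t)^2 \pi^n_t(1-\pi^n_t)$ coming from the exclusion part, and $\int (x^n_t)' g$-type terms plus $\int v^\pm(\pi^n_t(x^n_t))(e^{\pm g}-1)$ from the walk part — up to $o(n)$ errors in the exponential scale; this is where the replacement of $c^\pm(\xi;0)$ by $v^\pm(\rho)$ at the observed density happens, using that the exclusion process is fast (the $n^2$ speed-up) so that locally it equilibrates to a Bernoulli profile. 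Combining these gives, for each fixed $(H,G)$, an upper bound of Varadhan type: $\limsup \tfrac1n \log \bb P_n(\cdot \in \mc C) \le -\inf_{\mc C}\{ \Lambda^*_{H,G} \}$, and optimizing over $(H,G)$ produces the variational expression. The identification of $\sup_{H} J(H;\pi) + \sup_G(\text{walk terms}) = \mc I_{\mathrm{ex}}(\pi) - h(\pi_0|u_0)\text{'s complement} + \mc I_{\mathrm{rw}}(x|\pi)$ is a calculus exercise: the Legendre transform of $g \mapsto \sum_z v^z (e^{zg}-1)$ against the linear term $g\,x'_t$ gives exactly the explicit $\mc I_{\mathrm{rw}}$ with the $a_{x,\pi}$ of \eqref{RateFn3} — one solves the first-order condition $x'_t = v^+ e^{a} - v^- e^{-a}$ for $e^{a}$ via the quadratic formula, which is precisely the stated formula, with the degenerate cases ($v^+v^-=0$) handled separately. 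For the initial data one uses Proposition \ref{p4} to produce the $h(\pi_0|u_0)$ term.

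For the lower bound I would, for a fixed target pair $(\pi, x)$ in the ``nice'' class (absolutely continuous $x$, finite-energy $\pi$ bounded away from $0$ and $1$, the three integrals in \eqref{RateFn3} finite), construct a tilted process via a change of measure using the exponential martingale associated with a near-optimal $(H,G)$: choose $H$ so that the tilted exclusion process has hydrodynamic limit $\pi$ (this is the Kipnis-Olla-Varadhan perturbation, solving the appropriate PDE), and choose $g^\pm_t$ so that the tilted walk has clocks of rate $n v^\pm(\pi_t(x_t)) e^{\pm a_{x,\pi}(t)}$, which by the law of large numbers of the tilted dynamics (another application of the superexponential lemma / hydrodynamics under perturbation) drives $x^n_t \to x_t$. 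Then the usual entropy / Jensen argument gives $\liminf \tfrac1n\log \bb P_n(\text{nbhd of }(\pi,x)) \ge -\{\mc I_{\mathrm{ex}}(\pi)+\mc I_{\mathrm{rw}}(x|\pi)\}$, and a density argument (approximating a general finite-cost $(\pi,x)$ by nice ones along which the rate function converges) extends it. Exponential tightness — needed both to pass from compact to closed sets in the upper bound and to guarantee goodness — comes from the characterization of compacts in $\mc M^+([0,T])$ noted in Section \ref{topo} (the total mass $\tfrac1n(N^{n,+}_T+N^{n,-}_T)$ has Poissonian tails, so it is exponentially tight), together with standard exponential tightness of the SSE empirical measure in Skorohod space.

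The main obstacle — and the reason the paper invokes Orlicz spaces — is matching the upper and lower bounds without convexity: the walk's contribution produces a Legendre pair between a linear functional ($x'_t$, which is only an $L^1$-type object, not better) and the superlinear-but-not-superquadratic Poissonian cost $\lambda \mapsto \lambda\log\lambda$, while the exclusion part is genuinely quadratic; the sum is not jointly convex in a way that lets one naively close the duality gap, and the relevant function spaces for $x'$ and for $(v^z(\pi_t(x_t)))^{-1}$ are Orlicz spaces $L\log L$ rather than any $L^p$. Concretely, the hard step is showing that the variational formula $\sup_{H,G} \Lambda^*_{H,G}$ obtained in the upper bound actually equals $\mc I_{\mathrm{ex}}+\mc I_{\mathrm{rw}}$ (rather than some lower convex envelope), i.e.\ that the explicit candidate rate function is the correct one; this requires (a) an energy estimate forcing the relevant densities to be well-behaved, (b) proving the finiteness conditions in \eqref{RateFn3} are exactly the ones under which a recovery sequence exists, and (c) using the duality between the Orlicz space $L\log L$ and its conjugate $L\exp$ to show that the class of smooth $(H,G)$ is rich enough to saturate the supremum. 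I expect steps (b)–(c), i.e.\ the Orlicz-space well-posedness of the variational problem and the density/approximation argument feeding the lower bound, to be where essentially all the difficulty lies; the hydrodynamic-limit and superexponential inputs, while technical, are by now fairly standard machinery.
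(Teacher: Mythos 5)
Your proposal follows essentially the same route as the paper: Donsker--Varadhan tilting with the two families of exponential martingales, superexponential replacement (including the local, non-spatially-averaged version at the walker's position), the energy estimate, the open--compact--closed chain via the minimax lemma and exponential tightness for the upper bound, the explicit Legendre computation yielding $a_{x,\pi}$, and a lower bound by perturbed hydrodynamics plus relative entropy and Jensen, closed by an Orlicz-space ($L\log L$ versus its exponential conjugate) approximation argument. You have also correctly located the genuine difficulty in the non-convex matching of the two bounds, so there is nothing substantive to add.
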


The rest of the paper is devoted to the proof of Theorems \ref{t1} and \ref{t2}.

\section{Tilting measures and exponential martingales}\label{Martingales} 

According to Donsker-Varadhan approach to large deviations \cite{DV}, in order to show a large deviation principle, it is necessary to construct a sufficiently rich family of exponential martingales. The rough idea which will be clear along the proof is that these exponential martingales will be used to tilt the original distribution of the system in consideration, in such a way that atypical events become typical under the tilted distribuion.
Let us introduce the family of martingales relevant for our scope.
Recall equation \eqref{jointGen} and let $F: \Omega_n \times \bb T_n \times [0,T] \to \bb R$ be differentiable in the time variable. Then, the process
\begin{equation}\label{F}
\exp\Big\{ F_t(\eta_t^n; x_t^n) -F_0(\eta_0^n; x_0^n) - \int_0^t e^{-F_s(\eta_s^n; x_s^n)} \big(\partial_s + L_n\big) e^{F_s(\eta_s^n; x_s^n)} ds\Big\}
\end{equation}
is a positive martingale of unit expectation (see e.g. \cite{KL}, Lemma 5.1 in Appendix 1).
It turns out that there are two types of relevant functions for the large deviations problem. Let $a: [0,T] \to \bb R$ be a continuously differentiable function. Taking $F_t(\eta;x) = n a(t)x$ in \eqref{F}, we see that the process $\{\mc M_t^{a,n}; t \in [0,T]\}$ given by
\begin{equation}\label{Ma}
\frac{1}{n}\log \mc M_t^{a,n} = a(t) x_t^n - a(0) x_0^n -\int_0^t \Big(a'(s) x_s^n + \sum_{z=\pm 1} c^{z}(\eta_s^n; x_s^n) \big(e^{za(s)}-1\big)\Big)ds
\end{equation}
is a positive martingale with unit expectation. Notice that by definition, $a(0)x_0^n \equiv 0$. Notice as well that integrating by parts, we see that
\begin{equation*}
a(T)x_T^n- \int_0^T a'(t)x_t^n dt = \int_0^T a(t) \omega^n(dt).
\end{equation*}
Therefore, in a sense, knowing $\mc M_T^{a,n}$ for every $a$, we know $\{x_t^n; t \in [0,T]\}$. 

The second type of function that plays a role in the derivation of a large deviation principle is the following. Let $H: [0,T] \times \bb T \to \bb R$ of class $\mc C^{1,2}$, that is, once continuously differentiable in time and twice continuously differentiable in space. 
Let us define 
\begin{equation*}
\nabla_{x,y}^n H_t := n^2 \int \big(\delta_y^n(z)-\delta_x^n(z)\big)H_t(z) dz,
\end{equation*}
\begin{equation*}
\Delta_n H_t(x) := n\sum_{\substack{ y \in \bb T_n\\y \sim x}} \nabla_{x,y}^n H_t.
\end{equation*}
It is not difficult to check that for $x \in \bb T_n$, $y=x+\frac{1}{n}$, the function $\nabla_{x,y}^nH_t$ is a discrete approximation of the gradient $\nabla H_t(x)$, and that $\Delta_n H_t(x)$ is a discrete approximation of the Laplacian $\Delta H_t(x)$. We extend the definition of $\Delta_n H_t$ to $\bb T$ by taking linear interpolations.
Taking $F_t(\eta;x) = n \pi_t^n(H_t)$ in \eqref{F}, we see that the process $\{\mc M_t^{H,n}; t \in [0,T]\}$ given by
\begin{equation}\label{MH}
\frac{1}{n}\log \mc M_t^{H,n} = \pi_t^n(H_t) - \pi_0^n(H_0) - \!\int_0^t \!\!\!\Big\{\pi_s^n(\partial_sH_s) +\frac{2}{n} \!\!\sum_{x \in \bb T_n}\!\!  \eta_s^n(x) \Delta_n H_s(x) \Big\}ds - \mc Q_t^n(H),
\end{equation}
where
\begin{equation*}
\mc Q_t^n(H) = 2\int_0^t n \sum_{x \sim y} \eta_s^n(x)\big(1-\eta_s^n(y)\big)\psi\Big(\frac{1}{n}\nabla_{x,y}^nH_s\Big)ds 
\end{equation*}
and $\psi(u) = e^u-u-1$, is a positive martingale with unit expectation\footnote{
Notice that we are making an abuse of notation, using the same superscript structure for $\mc M_t^{a,n}$ and $\mc M_t^{H,n}$. Later on we will introduce some more efficient way to handle multiple indices.
}.
Since we are assuming that  $H$ is of class $\mc C^{1,2}$ we can write
\begin{equation*}
\frac{1}{n} \!\!\sum_{x \in \bb T_n} \!\! \eta_s^n(x) \Delta_n H_s(x)  = \pi_s^n(\Delta H_s)  + \mc R_s^n(H),
\end{equation*}
where the error term $\mc R_s^n(H)$ is bounded by a  function of the form $r_n(H)$, depending only on the modulus of continuity of $\Delta H$ in $\bb T \times [0,T]$ and converging to $0$ as $n$ tends to $\infty$. Since the jumps of the environment and the particle are a.s.~disjoint, the martingales $\{\mc M_t^{a,n}; t \in [0,T]\}$, $\{\mc M_t^{H,n}; t \in [0,T]\}$ are orthogonal, in the sense that the process $\{\mc M_t^{a,n}\mc M_t^{H,n}; t\in [0,T]\}$ is also a positive martingale with unit expectation.

\section{The superexponential estimate}
\label{s4}
One of the main challenges in order to prove a large deviation principle in the context of interacting particle systems, is to show that local functions of the dynamics, when averaged over space and time, can be expressed as functions of the empirical measure plus an error which is superexponentially small. Let us explain what the superexponential estimate is in the case of the simple exclusion process (that is, our environment process). In order to do this, we need some notation. Let $f: \Omega \to \bb R$ be a local function. Recall the convention about how to project $f$ into $\Omega_n$. Define $\bar{f}(\rho) = \int f d\nu_\rho$ for $\rho \in [0,1]$. For $\epsilon \in (0,\frac{1}{2})$ and $x \in \bb T$, let us define $\iota_\epsilon(x) = \frac{1}{\epsilon} \mathbf{1}((x,x+\epsilon]))$. When $x=0$, we just write $\iota_\epsilon$ instead of $\iota_\epsilon(0)$. 
The following lemma is stated in \cite{KOV}, Theorem 2.1.

\begin{lemma}[Superexponential estimate]
\label{p8}
Let $H: [0,T] \times \bb T \to \bb R$ be a continuous function. Let us define
\begin{equation*}
\mc R_t^{n, \epsilon}(H) = \int_0^t \frac{1}{n} \sum_{x \in \bb T_n} \big\{ \tau_x f(\eta_s^n) - \bar{f} \big(\pi_s^n(\iota_\epsilon(x))\big) \big\} H_s(x) ds.
\end{equation*}
Then, for any $\delta >0$, and any $t \in [0,T]$,
\begin{equation*}
\varlimsup_{\epsilon \to 0} \varlimsup_{n \to \infty} \frac{1}{n} \log \bb P_n\big(\big|\mc R_t^{n,\epsilon}(H)\big|>\delta \big) =-\infty.
\end{equation*}
\end{lemma}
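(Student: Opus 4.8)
The plan is to follow the classical Donsker--Varadhan / Feynman--Kac route, exactly as for the hydrodynamic large deviations of the symmetric exclusion process. Note first that $\mc R_t^{n,\epsilon}(H)$ depends only on the environment, so it suffices to work under the law of $\{\eta_s^n\}$ started from $\mu^n=\nu^n_{u_0}$. By the exponential Chebyshev inequality, for every $\gamma>0$,
\[
\tfrac1n\log\mathbb P_n\big(\mc R_t^{n,\epsilon}(H)>\delta\big)\ \le\ -\gamma\delta+\tfrac1n\log\mathbb E_n\big[\exp(n\gamma\,\mc R_t^{n,\epsilon}(H))\big],
\]
and likewise with $\mc R_t^{n,\epsilon}(H)$ replaced by $-\mc R_t^{n,\epsilon}(H)$; so it is enough to bound the exponential moment for every fixed $\gamma$ and then let $\gamma\to\infty$. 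To reduce the non-equilibrium initial law $\mu^n$ to a Bernoulli equilibrium $\nu_{1/2}$, write $\phi_n=d\mu^n/d\nu_{1/2}$ and use Cauchy--Schwarz, $\mathbb E_n[e^{n\gamma\mc R}]=\int\phi_n\,e^{n\gamma\mc R}\,d\nu_{1/2}\le\|\phi_n\|_{L^2(\nu_{1/2})}\,\mathbb E_{\nu_{1/2}}[e^{2n\gamma\mc R}]^{1/2}$. Because $\mu^n$ is a product measure with densities in $[\epsilon,1-\epsilon]$, a one-line computation gives $\tfrac1n\log\|\phi_n\|_{L^2(\nu_{1/2})}\le C(u_0)<\infty$, a constant independent of $n,\epsilon,\gamma,\delta$; thus, modulo this harmless constant, everything reduces to controlling $\mathbb E_{\nu_{1/2}}[e^{2n\gamma\mc R_t^{n,\epsilon}(H)}]$, for which $\nu_{1/2}$ is invariant and reversible.

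By the Feynman--Kac formula and the Rayleigh--Ritz bound for the top eigenvalue of the self-adjoint operator $L^{ex}_n+2n\gamma V_s$ on $L^2(\nu_{1/2})$, where $V_s(\eta)=\tfrac1n\sum_{x\in\bb T_n}H_s(x)\big(\tau_xf(\eta)-\bar f(\pi_s^n(\iota_\epsilon(x)))\big)$, one obtains
\[
\tfrac1n\log\mathbb E_{\nu_{1/2}}\big[e^{2n\gamma\,\mc R_t^{n,\epsilon}(H)}\big]\ \le\ \int_0^t\sup_{g:\,\int g^2d\nu_{1/2}=1}\Big\{\tfrac{2\gamma}{n}\sum_{x\in\bb T_n}H_s(x)\!\int\!\big(\tau_xf-\bar f(\pi_s^n(\iota_\epsilon(x)))\big)g^2\,d\nu_{1/2}-n\,D_n(g)\Big\}\,ds,
\]
with $D_n(g)$ the Dirichlet form of the rate-one symmetric exclusion process; the $n^2$ speed-up has become the factor $n$ in front of $D_n(g)$ after dividing by $n$. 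Hence the lemma will follow once we show that this variational expression tends to $0$ as $n\to\infty$ and then $\epsilon\to0$, for every fixed $\gamma$.

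To do so I would insert an intermediate, microscopic box radius $\ell$ (sent to $\infty$ after $n$) and split the replacement $\tau_xf\rightsquigarrow\bar f(\pi_s^n(\iota_\epsilon(x)))$ in two. \emph{One-block estimate:} replace $\tau_xf$ by $\bar f$ of the particle density in the box of radius $\ell$ about $x$; since the exclusion process restricted to such a box is irreducible with a spectral gap bounded below uniformly in the number of particles, $\int(\tau_xf-\bar f(\text{box density}))g^2\,d\nu_{1/2}$ is dominated by the Dirichlet form of $g$ on the $O(\ell)$ bonds inside that box plus an error $\varepsilon(\ell)\to0$; summing over $x$, the localized Dirichlet forms add up to at most $O(\ell)\,D_n(g)$, which for $n$ large is absorbed by $-nD_n(g)$, leaving $2\gamma\|H\|_\infty\varepsilon(\ell)$. \emph{Two-block estimate:} replace the density over the box of radius $\ell$ by $\pi_s^n(\iota_\epsilon(x))$, the density over the macroscopic box of size $\epsilon$; writing the latter as an average over $\sim\epsilon n/\ell$ disjoint sub-boxes of radius $\ell$ and using that $\bar f$ is Lipschitz, this reduces to the asymptotic decoupling of two distant $\ell$-boxes, again a consequence of the spectral gap combined with the diffusive time scale, and produces an error vanishing as $n\to\infty$, $\ell\to\infty$, $\epsilon\to0$ in that order. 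Collecting the estimates, $\limsup_n(\text{variational expr.})\le 2\gamma\|H\|_\infty\varepsilon(\ell)+(\text{two-block error})$; sending $\ell\to\infty$ then $\epsilon\to0$ kills the right-hand side, and combined with the Chebyshev bound and the finite constant $C(u_0)$, letting $\gamma\to\infty$ at the very end gives $-\infty$.

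The crux is the one-block/two-block pair, and specifically that the bounds be uniform over all densities $g$ and survive multiplication by the arbitrarily large prefactor $\gamma$: this is precisely where one needs the quantitative ergodicity (spectral gap) of the symmetric exclusion process on boxes together with the diffusive ($n^2$) speed-up of the environment — the softer $L^2$ and entropy arguments sufficient for the law of large numbers do not suffice at the superexponential scale. Everything else — Chebyshev, Feynman--Kac, the initial-measure reduction, the Lipschitz continuity of $\bar f$, replacing $\tfrac1n\sum_xH_s(x)\,\cdots$ by $\|H\|_\infty\tfrac1n\sum_x|\cdots|$ — is routine.
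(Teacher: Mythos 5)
Your proposal is correct in substance and follows exactly the classical Guo--Papanicolaou--Varadhan scheme: exponential Chebyshev, reduction to an equilibrium reference measure, Feynman--Kac together with the variational formula for the principal eigenvalue, and then a one-block/two-block decomposition. This is precisely what the paper relies on: Lemma \ref{p8} is not proved in the paper but quoted from \cite{KOV} (Theorem 2.1), and the same skeleton is what the paper executes in Section \ref{s4} for the local analogue, Lemma \ref{l1}. Two points of comparison are worth recording. First, where you invoke a spectral-gap/perturbation bound, the paper's one- and two-block proofs (Lemmas \ref{l2} and \ref{l3}) use a softer compactness and lower-semicontinuity argument: maximizers of the finite-dimensional variational problem must have vanishing localized Dirichlet form, hence are constant on each hyperplane of fixed particle number, after which the equivalence of ensembles finishes the job; your quantitative route is a legitimate alternative but is not needed here. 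Second, the one place your sketch is genuinely imprecise is the order of limits in the two-block step: coupling two $\ell$-boxes at distance up to $\epsilon$ via the path lemma (Lemma \ref{path}) costs a factor of order $\epsilon n$, so after $n\to\infty$ the surviving Dirichlet coefficient is only $\tfrac{1}{4\epsilon}$, and one must send $\epsilon\to 0$ for \emph{fixed} $\ell$ before sending $\ell\to\infty$ (the limits in Lemma \ref{l3} are $\varlimsup_{\ell\to\infty}\varlimsup_{\epsilon\to 0}\varlimsup_{n\to\infty}$); with your stated order ($n$, then $\ell$, then $\epsilon$) the two-block supremum is not controlled as $\ell\to\infty$ at fixed $\epsilon$ by the soft argument. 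This is a fixable bookkeeping issue, not a flaw in the approach.
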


This superexponential estimate is used in \cite{KOV} with two purposes. First, to express $\mc Q_t^n(H)$ (recall the definition of the martingale $\{\mc M_t^{H,n}; t \in [0,T]\}$in \eqref{MH}) as a function of $\{\pi_t^n; t \in [0,T]\}$ plus an error that is superexponentially small. And second, in order to obtain the hydrodynamic limit of suitable perturbations of the exclusion dynamics. Notice that, as a consequence, we can express $\mc M_t^{H,n}$ (more precisely, $\frac{1}{n} \log \mc M_t^{H,n}$) as a function of $\{\pi_t^n; t \in [0,T]\}$ plus a superexponentially small error. Recalling \eqref{Ma}, we see that in order to express $\mc M_t^{a,n}$ as a function of $\{(\pi_t^n, x_t^n); t \in [0,T]\}$, we need to express
\begin{equation*}
\int_0^t c^\pm(\eta_s^n; x_s^n)\big(e^{\pm a(s)}-1\big) ds
\end{equation*}
as a function of these two processes. The superexponential estimate does not apply for two reasons. First, there is no spatial average. Second, the position at which we measure the local function $c^\pm$ changes with time (since it follows the location of the random walk). In \cite{JLS} and in the context of the tagged particle problem, both problems were overcome by considering the environment as seen from the walk, $\{\xi_t^n; t \in [0,T]\}$. Notice that in terms of the process $\{\xi_t^n; t \in [0,T]\}$, the integral in question is given by
\begin{equation*}
\int_0^t c^\pm(\xi_s^n) \big(e^{\pm a(s)}-1\big) ds.
\end{equation*}

In this section, our objective will be to show the following superexponential estimate.

\begin{lemma}[Local superexponential estimate]
\label{l1}
Let $f: \Omega_n \to \bb R$ be a local function. Then,
\begin{equation}
\label{ec42}
\varlimsup_{\epsilon \to 0} \varlimsup_{\vphantom{0}n \to \infty} \frac{1}{n} \log \bb P_n\Big(\Big|\int_0^t \big\{f(\xi_s^n) - \bar{f}\big(\hat{\pi}_s^n(\iota_\epsilon)\big)\big\} ds \Big| > \delta \Big) = -\infty
\end{equation}
for any $\delta>0$ and any $t \in [0,T]$.
\end{lemma}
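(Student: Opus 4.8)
We follow the Donsker--Varadhan scheme used for the tagged particle in \cite{JLS}: exponential Chebyshev, the Feynman--Kac formula, and a Rayleigh--Ritz bound. The crucial point is that the exclusion dynamics is accelerated by $n^2$, while the one-block step only involves a box of fixed size; the walker contributes only at order $n$, which is harmless.

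\smallskip

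\noindent\emph{Step 1 (reduction to a spectral estimate).} Put $V=f-\bar f\big(\hat\pi^n(\iota_\epsilon)\big)$; note that, by the definition of $\hat\pi^n$, this is a fixed local function of the configuration, whose support consists of $O(\epsilon n)$ sites near the origin. Fix $\lambda>0$. By the exponential Chebyshev inequality it suffices to bound $\tfrac1n\log\bb E_n\big[\exp\big(\pm\lambda n\int_0^tV(\xi_s^n)\,ds\big)\big]$ and let $\lambda\to\infty$ at the end. Since $u_0$ is bounded away from $0$ and $1$, for a suitable $\rho\in(0,1)$ we have $\tfrac{d\nu_{u_0}^n}{d\nu_\rho}\le e^{Cn}$ pointwise; combining this with the Feynman--Kac formula and the numerical-range bound (only the $\nu_\rho$-symmetric part of $\mc L_n$ enters) gives
\[
\tfrac1n\log\bb E_n\Big[\exp\Big(\lambda n\int_0^t V(\xi_s^n)\,ds\Big)\Big]\ \le\ C+\frac{t}{n}\,\sup_{\|h\|_{L^2(\nu_\rho)}=1}\Big\{\lambda n\,\langle Vh,h\rangle_{\nu_\rho}+\langle\mc L_n h,h\rangle_{\nu_\rho}\Big\}.
\]
The exclusion part of $\mc L_n$ is $\nu_\rho$-reversible and equals $-n^2 D_n(h)$, where $D_n(h)=\tfrac12\sum_{x\sim y}\langle(h(\xi^{x,y})-h(\xi))^2\rangle_{\nu_\rho}$; the walker part is of order $n$ and, $\nu_\rho$ being translation invariant and $c^\pm$ bounded, contributes at most $Cn\|h\|^2$. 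Hence it is enough to show that for every $\lambda>0$,
\[
\varlimsup_{\epsilon\to0}\ \varlimsup_{n\to\infty}\ \frac1n\,\sup_{\|h\|_{L^2(\nu_\rho)}=1}\Big\{\lambda n\,\langle Vh,h\rangle_{\nu_\rho}-n^2 D_n(h)\Big\}\ \le\ 0 ;
\]
indeed, the previous bound is then $\le C+o(1)$, Chebyshev yields $\tfrac1n\log\bb P_n(\cdot)\le-\lambda\delta+C+o(1)$, and $\lambda\to\infty$ concludes.

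\smallskip

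\noindent\emph{Step 2 (one-block estimate).} For a fixed integer $\ell$ let $\xi^\ell=\tfrac1\ell\sum_{y=1}^{\ell}\xi(\tfrac yn)$ be the empirical density on a microscopic block at the origin, and split $V=\big[f-\bar f(\xi^\ell)\big]+\big[\bar f(\xi^\ell)-\bar f(\hat\pi^n(\iota_\epsilon))\big]$, treating the two brackets separately. The first is supported in a fixed box $B_\ell$, so $D_n(h)\ge D_{B_\ell}(h)$, the exclusion Dirichlet form on the bonds inside $B_\ell$. Decomposing $L^2(\nu_\rho)$ along the number $j$ of particles in $B_\ell$: on each sector $\xi^\ell$ is constant, by the equivalence of ensembles $\big|\langle f\rangle_{\nu_{j,|B_\ell|}}-\bar f(j/|B_\ell|)\big|\le C/\ell$, and the SSEP on $B_\ell$ has spectral gap $\ge c/\ell^2$. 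The classical one-block computation (on each sector write $h=\bar h_j+(h-\bar h_j)$, bound the fluctuating part using the gap, apply Cauchy--Schwarz and $2ab\le\beta a^2+\beta^{-1}b^2$) gives, with $C_\lambda$ depending on $\lambda$ but not on $n,\ell$,
\[
\sup_{\|h\|=1}\Big\{\lambda n\,\langle(f-\bar f(\xi^\ell))h,h\rangle_{\nu_\rho}-n^2 D_{B_\ell}(h)\Big\}\ \le\ \frac{C\lambda n}{\ell}+C_\lambda\,\ell^2 ,
\]
which, divided by $n$, tends to $C\lambda/\ell$ as $n\to\infty$ and hence to $0$ as $\ell\to\infty$.

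\smallskip

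\noindent\emph{Step 3 (two-blocks estimate --- the main obstacle).} It remains to pass from the microscopic block $\xi^\ell$ to the macroscopic average $\hat\pi^n(\iota_\epsilon)$; since $\bar f$ is Lipschitz this reduces to controlling $\int_0^t\big(\xi_s^\ell-\hat\pi_s^n(\iota_\epsilon)\big)\,ds$. Here the lack of a spatial average (noted just before Lemma~\ref{p8}) is felt: restricting the Dirichlet form to a box of $O(\epsilon n)$ sites does not help, because its spectral gap is only of order $(\epsilon n)^{-2}$, so the penalty $n^2D$ is merely $O(\epsilon^{-2})$, too weak to absorb the order-$n$ perturbation. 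The replacement is instead carried out by a dyadic multi-scale comparison, from scale $\ell$ to $2\ell$ to $4\ell$ to $\cdots$: each step compares block averages on scales $m$ and $2m$ --- equivalently, the difference of two $m$-block averages at distance $m$, supported in a box of size $O(m)$, whose contribution the accelerated exclusion absorbs as long as $m\ll\sqrt n$, each step costing at most $C\lambda$ times a quantity that vanishes as $\ell\to\infty$ and $\epsilon\to0$; for the remaining coarser scales one invokes the classical superexponential estimate of Lemma~\ref{p8} for the genuinely Markovian environment $\{\eta_s^n\}$, together with the regularity of the limiting profile and the averaging along the walker's trajectory furnished by the time integral. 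Combining Steps~1--3 and letting $\lambda\to\infty$ proves \eqref{ec42}. Essentially all the difficulty is in Step~3; Steps~1--2 are the standard spectral machinery for large deviations of Markov processes.
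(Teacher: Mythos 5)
Your Steps 1 and 2 are essentially the paper's own argument (Section \ref{s3.1}, Proposition \ref{p11}, and the one-block estimate, Lemma \ref{l2}), and you correctly locate the real difficulty in Step 3 --- but Step 3 is where your proof has a genuine gap. The missing idea is the \emph{path (moving-particle) lemma}, Lemma \ref{path}: $\mc D^{\frac1n,y+\frac1n}(g)\le 4|y|n\,\mc D(g)$. Without some such device no comparison of two block averages can even begin: the exclusion Dirichlet form restricted to the bonds inside two disjoint boxes conserves the two particle numbers separately, so its kernel contains functions on which $|\xi^m(0)-\xi^m(y)|$ is constant and of order one, and neither a spectral gap nor the equivalence of ensembles applies on the union. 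With the path lemma, a single bond connecting the two blocks can be extracted from the available penalty $n\mc D(g)$ with a coefficient of order $1/(4\epsilon)$, which diverges as $\epsilon\to0$; combined with the Lipschitz bound $|\mc W_f^\ell-\mc W_f^{\epsilon n}|\le K_f\frac{\ell}{\epsilon n}\sum_y|\xi^\ell(0)-\xi^\ell(y)|$, this lets the paper compare the $\ell$-block at the origin with each translate $\xi^\ell(y)$, $|y|\le\epsilon$, in \emph{one} step (Lemma \ref{l3}). Both blocks in every such comparison have the fixed microscopic size $\ell$; no box of size $\epsilon n$ ever enters the Dirichlet form, so the $\sqrt n$ threshold produced by your crude gap-perturbation bound, and with it the whole dyadic scheme, is an artifact of not having the path lemma.

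Moreover, your fallback for the scales between $\sqrt n$ and $\epsilon n$ cannot be repaired: Lemma \ref{p8} concerns spatially averaged additive functionals of $\eta^n$, whereas the quantity to be replaced here is evaluated at the single, moving location $x_s^n$. The absence of a spatial average and the motion of the observation point are precisely the two reasons the paper gives, just before stating Lemma \ref{l1}, for why Lemma \ref{p8} does not apply and a new estimate is needed; and ``regularity of the limiting profile'' together with ``averaging along the walker's trajectory'' are law-of-large-numbers considerations that yield no superexponential bound. As written, Step 3 does not close and the lemma is not proved.
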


To make the exposition clear, the proof will be divided in various steps. Before starting the proof, we introduce some notations and conventions. Let us write
\begin{equation}\label{abbrev1}
\mc W_f^\ell(\xi) = f(\xi) - \bar{f} \Big(\frac{1}{\ell} \sum_{x=1}^\ell \xi(x)\Big).
\end{equation}
With this notation, the integral in the local superexponential lemma is equal to
\begin{equation}\label{abbrev2}
\int_0^t \mc W_{f}^{\epsilon n}(\xi_s^n) ds.
\end{equation}
For simplicity, we assume that the support of $f$ is contained on $\{1,\dots,\ell_0\}$ for some $\ell_0 \in \bb N$. In that case, $\supp(\mc W_f^\ell) = \{1,\dots,\ell\}$ for any $\ell \geq \ell_0$. We will indistinctly denote by $\Lambda_\ell$ the sets $\{1,\dots,\ell\} \subseteq \bb Z$ and $\{\frac{1}{n},\dots,\frac{\ell}{n}\} \subseteq \Omega_n$.

\subsection{Reduction to a variational problem}
\label{s3.1}
In this section we reduce the proof of the superexponential estimate to a variational problem involving the generator of the dynamics.
Let us start by introducing an elementary estimate, whose check is left to the reader, which will be used several times.

\begin{lemma}
\label{p10}
For any positive numbers $a_1,\dots,a_\ell$,
\begin{equation*}
\log\{a_1+\dots+a_\ell\} \leq \max_{1\leq j\leq \ell} \log a_j + \log \ell.
\end{equation*}
\end{lemma}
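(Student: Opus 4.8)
The plan is to exploit nothing more than the monotonicity of the logarithm together with the trivial bound of a finite sum by its number of terms times the largest term. Concretely, set $M = \max_{1 \le j \le \ell} a_j$, which is a positive number since all the $a_j$ are positive. The first step is to observe that each summand satisfies $a_j \le M$, so that
\begin{equation*}
a_1 + \dots + a_\ell \le \ell M.
\end{equation*}

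The second step is to apply $\log$ to both sides. Since $\log$ is strictly increasing on $(0,\infty)$ and both sides are positive, this yields
\begin{equation*}
\log\{a_1 + \dots + a_\ell\} \le \log(\ell M) = \log \ell + \log M.
\end{equation*}
The final step is simply to identify $\log M$: again by monotonicity of $\log$, $\log M = \log\big(\max_{1 \le j \le \ell} a_j\big) = \max_{1 \le j \le \ell} \log a_j$. Substituting this into the previous display gives exactly the claimed inequality, and the proof is complete.

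There is no real obstacle here; the statement is an elementary inequality recorded for later repeated use, and the only points worth noting are that positivity of the $a_j$ guarantees $M>0$ so that all logarithms are well defined and finite, and that the monotonicity of $\log$ is used twice — once to pass the inequality $\sum_j a_j \le \ell M$ through $\log$, and once to commute $\log$ with the maximum. One could alternatively phrase it via Jensen's inequality or concavity, but the direct argument above is shorter and requires no additional hypotheses.
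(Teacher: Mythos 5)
Your proof is correct and is exactly the standard argument (bound the sum by $\ell$ times the maximum and take logarithms); the paper itself leaves this check to the reader, so there is nothing to compare against beyond noting that this is clearly the intended route.
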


Using this lemma, we see that for any random variable $X$, 
\begin{equation*}
\begin{split}
\log P(|X|>\delta) &\leq \log\{P(X>\delta) + P(X<-\delta)\}\\ 
		&\leq \max\{\log P(X>\delta),\log P(-X>\delta)\}+\log 2.
\end{split}
\end{equation*}
Therefore, in order to show \eqref{ec42}, it is enough to show that
\begin{equation}
\label{ec47}
\varlimsup_{\epsilon \to 0} \varlimsup_{\vphantom{0}n \to \infty} \frac{1}{n} \log \bb P_n \Big( \pm \int_0^t \mc W_f^{\epsilon n}(\xi_s^n) ds > \delta\Big) =-\infty.
\end{equation}
Therefore, we get rid of the absolute value in \eqref{ec42}. This has several advantages as it will be clear soon. 
By the exponential Chebyshev's inequality, for any random variable $X$ and any $\gamma >0$ we have that
\begin{equation*}
\frac{1}{n} \log P(\pm X >\delta) \leq \frac{1}{n} \log \frac{E[e^{\pm\gamma n X}]}{e^{\pm \gamma n \delta}} = \frac{1}{n} \log E[e^{\gamma n X}] -\gamma \delta.
\end{equation*}
Therefore, it is enough to show that 
\begin{equation}\label{donno}
\nop_{\gamma} \varlimsup_{\epsilon \to 0} \varlimsup_{\vphantom{0}n \to \infty} \frac{1}{n} \log \bb E_n\Big[ \exp\Big\{ \pm \gamma n \int_0^t \mc W_f^{\epsilon n}(\xi_s^n)ds\Big\}\Big] < +\infty,
\end{equation}
since in that case, calling this supremum $\kappa$,
\begin{equation*}
\varlimsup_{\epsilon \to 0} \varlimsup_{\vphantom{0}n \to \infty} \frac{1}{n} \log \bb P_n\Big(\pm\int_0^t \mc W_f^{\epsilon n} (\xi_s^n) ds > \delta\Big) \leq \kappa -\gamma \delta
\end{equation*}
for any $\gamma>0$ and sending $\gamma$ to infinity, \eqref{ec47} follows. Since $- \mc W_f^{\epsilon n} = \mc W_{-f}^{\epsilon n}$, from now on we omit the $\pm$ in \eqref{donno}.

The next step is to put the process in near-equilibrium distribution. Fix $\rho \in (0,1)$ and let us denote by $\bb P_n^{\rho}$ the distribution of the process $\{\xi_t^n; t \in [0,T]\}$ with initial distribution $\nu_\rho$ (or equivalently, the process $\{(\eta_t^n,x_t^n); t \in [0,T]\}$ with initial distribution $\nu_\rho \otimes \delta_0$), and let $\bb E_n^\rho$ be the expectation with respect to $\bb P_n^\rho$. The actual value of $\rho$ will not be very important. Notice that $\nu_\rho$ is {\em not} stationary under the evolution of $\{\xi_t^n; t \in [0,T]\}$, but it is indeed close to stationarity in a sense to be specified below. By the Markov property, $\frac{d \bb P_n}{d \bb P_n^\rho}=\frac{d \mu^n}{d \nu_\rho}$. Moreover, since $\nu_\rho(\eta) \geq \min\{\rho,1-\rho\}$ for any $\eta \in \Omega_n$ (in fact, the worst configurations are $\eta(x) \equiv 0 \text{ or } 1$), we conclude that there exists a constant $K_0 = K_0(\rho)$ such that $\|\frac{d \mu^n}{d \nu_\rho}\|_\infty \leq K_0^n$ for any $n 
\in \bb N$. In particular, for any function $F \geq 0$, 
\begin{equation*}
\bb E_n[F] = \bb E_n^\rho\big[\tfrac{d \nu^n}{d \nu_\rho}F\big] \leq K_0^n \bb E_n^\rho[F]. \end{equation*}
Therefore, from \eqref{donno}, we get
\begin{equation}\label{donno2}
\frac{1}{n} \log \bb E_n\Big[ \exp\Big\{ \gamma n \int_0^t \mc W_f^{\epsilon n}(\xi_s^n)ds\Big\}\Big] 
		\leq \frac{1}{n} \log \bb E_n^\rho\Big[ \exp\Big\{ \gamma n \int_0^t \mc W_f^{\epsilon n}(\xi_s^n)ds\Big\}\Big] + K_0,
\end{equation}
and it is enough to consider the case $\mu^n = \nu_\rho$. The nowadays classical argument of Varadhan (see Lemma A1.7.2 on page 336 of \cite{KL}) to estimate exponential expectations as in the r.h.s. of \eqref{donno}, combines Feynman-Kac's formula with the variational formula for the largest eigenvalue of the operator $\mc L_n+ \mc W_f^{\epsilon n}$, to get the bound
\begin{equation}
\label{ec53}
\frac{1}{n} \log \bb E_n^\rho\Big[ \exp\Big\{ \gamma n \int_0^t \mc W_f^{\epsilon n}(\xi_s^n)ds\Big\}\Big] 
		\leq t \nop_g\big\{ \gamma \<\mc W_f^{\epsilon n} , g^2\> +\tfrac{1}{n} \<g, \mc L_n g\>\big\},
\end{equation}
where $\<\cdot,\cdot\>$ denotes the inner product in $L^2(\nu_\rho)$, the supremum runs over functions $g: \Omega_n \to \bb R$ such that $\<g,g\>=1$ and $\mc L_n$ is the generator of the process $\{\xi_t^n; t \in [0,T]\}$ in equation \eqref{EPGen}. This variational problem will be the starting point of the next step of the proof.

\subsection{Some properties of $\<g, \mc L_n g\>$}
Define, for $g: \Omega_n \to \bb R$ and $x,y \in \bb T_n$,
\begin{equation*}
\mc D^{x,y}(g) = \frac{1}{2} \int \big(g(\eta^{x,y})-g(\eta)\big)^2 d\nu_\rho,
\end{equation*}
and define $\mc D(g) = \sum_{x \sim y} \mc D^{x,y}(g)$. Notice that $\<g,-L^{ex}_ng\>=n^2\mc D(g)$, that is, $n^2 \mc D(g)$ is the {\em Dirichlet form} associated to the exclusion process $\{\eta_t^n; t \in [0,T]\}$ identified by the generator in equation \eqref{exGen}. The following proposition was proved in \cite{AveFraJarVol}, see Lemma 2.2 therein.

\begin{proposition}
\label{p11}
There exists a constant \footnote{Note that under the assumption $c^++c^- \equiv 1$, one can take $K_1=1$.}$K_1$ such that $\<g,\mc L_n g\> \leq -n^2 \mc D(g) + K_1 n$ for any function $g: \Omega_n \to \bb R$ such that $\<g,g\>=1$.
\end{proposition}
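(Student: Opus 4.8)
The plan is to exploit the decomposition of the generator $\mc L_n$ from \eqref{EPGen} into its exclusion part, which we call $L^{ex}_n$, and the random-walk part, which we call $\mc L_n^{rw}$, defined by $\mc L_n^{rw} f(\xi) = n\sum_{z=\pm1} c^z(\xi;0)(f(\tau_{z/n}\xi)-f(\xi))$. For the exclusion part we already identified $\<g,-L^{ex}_n g\> = n^2\mc D(g)$, so it suffices to show that the random-walk contribution $\<g,\mc L_n^{rw} g\>$ is bounded above by $K_1 n$ uniformly over $g$ with $\<g,g\>=1$. First I would write $\<g,\mc L_n^{rw}g\>$ explicitly as $n\sum_{z=\pm1}\int c^z(\xi;0)\,g(\xi)\big(g(\tau_{z/n}\xi)-g(\xi)\big)\,\nu_\rho(d\xi)$. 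Since $\nu_\rho$ is a product measure it is invariant under the shift $\tau_{z/n}$, and the jump rates satisfy $0\le c^z\le 1$ with $c^++c^-\equiv1$ (the general case only changes the constant), so each term can be controlled by a change of variables together with Cauchy--Schwarz.

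The key computation is the elementary bound $g(\xi)(g(\tau_{z/n}\xi)-g(\xi)) \le \tfrac12\big(g(\tau_{z/n}\xi)^2 - g(\xi)^2\big)$, which follows from $ab - a^2 \le \tfrac12(b^2-a^2)$, i.e.\ from $-\tfrac12(a-b)^2\le 0$. Summing this against $c^z(\xi;0)\,\nu_\rho(d\xi)$ and using shift-invariance of $\nu_\rho$ to rewrite $\int c^z(\xi;0)\,g(\tau_{z/n}\xi)^2\,\nu_\rho(d\xi) = \int c^z(\tau_{-z/n}\xi;0)\,g(\xi)^2\,\nu_\rho(d\xi) = \int c^{z}(\xi;-z/n)\,g(\xi)^2 \nu_\rho(d\xi)$, one is left with a telescoping-type expression of the form $\tfrac{n}{2}\int \big(\tilde c(\xi)-c(\xi)\big)\,g(\xi)^2\,\nu_\rho(d\xi)$, where $\tilde c$ and $c$ are bounded local jump rates differing by a shift. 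Since both are bounded by $1$, the integrand is bounded in absolute value by $g(\xi)^2$, and using $\<g,g\>=1$ this whole quantity is at most $\tfrac{n}{2}\cdot 2 = n$, or more crudely $K_1 n$ for an absolute constant $K_1$ (and $K_1=1$ under $c^++c^-\equiv1$, as the footnote records). Combining with $\<g,L^{ex}_n g\> = -n^2\mc D(g)$ gives $\<g,\mc L_n g\>\le -n^2\mc D(g)+K_1 n$.

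Since this proposition is quoted as Lemma~2.2 of \cite{AveFraJarVol}, in the present paper I would simply cite that reference and include at most the one-line reduction above, rather than reproducing the argument in full. The only mild subtlety — and the place where one must be slightly careful — is the bookkeeping in the change of variables: one must check that shifting the argument of $c^z$ by $z/n$ does not introduce terms that grow with $n$, which is exactly where locality of $c$ and the uniform bound $c^z\le 1$ (equivalently $c^++c^-\equiv 1$) enter. Everything else is a routine application of Cauchy--Schwarz and the inequality $ab-a^2\le\tfrac12(b^2-a^2)$, so I do not anticipate a genuine obstacle here; the real work of the paper lies downstream, in the variational analysis of the right-hand side of \eqref{ec53}.
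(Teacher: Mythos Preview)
Your proposal is correct and matches the paper's treatment: the paper does not prove Proposition~\ref{p11} in the text but simply cites \cite{AveFraJarVol}, Lemma~2.2, exactly as you suggest doing. The argument you sketch---decomposing $\mc L_n = L^{ex}_n + \mc L_n^{rw}$, using the pointwise inequality $ab-a^2\le\tfrac12(b^2-a^2)$ together with shift-invariance of $\nu_\rho$ to bound $\<g,\mc L_n^{rw}g\>\le K_1 n$---is the standard one and is what that reference does.
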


The intuition behind this proposition is the following. The quantity $\<g,\mc L_n g\>$ measures the entropy production rate, and if $\nu_\rho$ were invariant, it should be negative. Since $\nu_\rho$ is invariant under the dynamics of the environment, entropy can grow only due to the motion of the random walk. Since the random walk jumps about $n$ times on a fixed time interval, the entropy of the distribution of the process with respect to $\nu_\rho$ should grow with time at most linearly in $n$.

The following simple observation, which we state as a proposition,  will be useful in what follows.

\begin{proposition}
\label{p12}
For any $x,y \in \bb T_n$, the function 
\begin{equation*}
g \mapsto \int \big( \sqrt{g(\eta^{x,y})}-\sqrt{g(\eta)}\big)^2 \nu_\rho(d\nu)
\end{equation*}
is convex. In particular, $g\mapsto \mc D(\sqrt g)$ is convex.
\end{proposition}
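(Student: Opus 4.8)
The plan is to establish convexity of the single-bond functional $g\mapsto\int(\sqrt{g(\eta^{x,y})}-\sqrt{g(\eta)})^2\,\nu_\rho(d\eta)$ on the cone of nonnegative functions, and then obtain convexity of $g\mapsto\mc D(\sqrt g)$ for free, since $\mc D(\sqrt g)=\sum_{x\sim y}\mc D^{x,y}(\sqrt g)$ is a finite sum of such functionals and a sum of convex functions is convex. The key observation is that for a fixed bond $\{x,y\}$ the integrand can be symmetrized: writing $\Phi^{x,y}(g)=\int(\sqrt{g(\eta^{x,y})}-\sqrt{g(\eta)})^2\,\nu_\rho(d\eta)$, the measure $\nu_\rho$ is invariant under the involution $\eta\mapsto\eta^{x,y}$, so $\Phi^{x,y}(g)=2\int\big(g(\eta)+g(\eta^{x,y}) -2\sqrt{g(\eta)}\sqrt{g(\eta^{x,y})}\big)\,\nu_\rho(d\eta)$, i.e.\ $\Phi^{x,y}(g)=2\int\big(g(\eta)-\sqrt{g(\eta)\,g(\eta^{x,y})}\big)\,\nu_\rho(d\eta)$ after using invariance once more on the first term. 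Thus, up to a linear (hence affine, hence convex) term $2\int g\,d\nu_\rho$, the functional is $-2\int\sqrt{g(\eta)\,g(\eta^{x,y})}\,\nu_\rho(d\eta)$.

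The main point is therefore to show that $g\mapsto-\int\sqrt{g(\eta)\,g(\eta^{x,y})}\,\nu_\rho(d\eta)$ is convex on $\{g\ge 0\}$, equivalently that $(a,b)\mapsto-\sqrt{ab}$ is convex on $[0,\infty)^2$ and that convexity is preserved under the linear substitution $g\mapsto(g(\eta),g(\eta^{x,y}))$ followed by integration against the positive measure $\nu_\rho$. Convexity of $(a,b)\mapsto-\sqrt{ab}$ on the positive quadrant is elementary: it is $-1$ times a geometric mean, which is concave (this is, e.g., a two-point case of the arithmetic–geometric mean inequality or a direct Hessian computation, where the Hessian of $-\sqrt{ab}$ is positive semidefinite for $a,b>0$, and continuity extends convexity to the closed quadrant). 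Since for fixed $\eta$ the map $g\mapsto(g(\eta),g(\eta^{x,y}))$ is linear, the composition $g\mapsto-\sqrt{g(\eta)g(\eta^{x,y})}$ is convex in $g$ pointwise in $\eta$; integrating a family of convex functions against the positive measure $\nu_\rho$ preserves convexity. Adding back the affine term $2\int g\,d\nu_\rho$ gives convexity of $\Phi^{x,y}$, and summing over bonds gives convexity of $g\mapsto\mc D(\sqrt g)$.

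I do not expect any serious obstacle here; the only mild care needed is the bookkeeping with the involution invariance of $\nu_\rho$ (which holds because $\nu_\rho$ is a product Bernoulli measure, hence exchangeable under the transposition of coordinates $x$ and $y$) and the reduction of convexity of a functional on function space to pointwise convexity in the finitely many coordinates $\{g(\eta):\eta\in\Omega_n\}$, which is legitimate since $\Omega_n$ is finite. The genuinely substantive ingredient is just the concavity of the geometric mean $(a,b)\mapsto\sqrt{ab}$; everything else is linearity plus positivity of the measure.
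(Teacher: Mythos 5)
Your proof is correct; the paper states Proposition \ref{p12} without proof (it is flagged as a ``simple observation''), and your route --- expand the square, use the invariance of the product measure $\nu_\rho$ under the transposition $\eta\mapsto\eta^{x,y}$ to reduce the functional to an affine term plus $-2\int\sqrt{g(\eta)\,g(\eta^{x,y})}\,\nu_\rho(d\eta)$, and then invoke concavity of the geometric mean precomposed with the linear evaluation maps $g\mapsto(g(\eta),g(\eta^{x,y}))$ --- is the standard and complete argument on the cone $\{g\ge0\}$. The only blemish is the spurious leading factor $2$ in your first displayed identity (the expansion $(\sqrt a-\sqrt b)^2=a+b-2\sqrt{ab}$ carries no such prefactor); your subsequent formula $\Phi^{x,y}(g)=2\int\bigl(g(\eta)-\sqrt{g(\eta)\,g(\eta^{x,y})}\bigr)\,\nu_\rho(d\eta)$ is nonetheless correct, and the constant is in any case immaterial for convexity.
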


\subsection{The one-block estimate}
\label{s3.3}
In the previous section, we have reduced the proof of \eqref{ec42} to the variational problem
\begin{equation*}
\nop_{\gamma}\varlimsup_{\epsilon \to 0} \varlimsup_{\vphantom{0} n \to \infty} \sup_g\big\{ \gamma \<\mc W_f^{\epsilon n} , g^2\> +\tfrac{1}{n} \<g, \mc L_n g\>\big\} <+\infty.
\end{equation*}
Following the original idea of Guo, Papanicolaou and Varadhan \cite{GPV}, \cite{DV}, \cite{KOV}, it is convenient to break this variational problem into two pieces. The first one is what is known as the {\em one-block estimate}. In this estimate, the macroscopically small box of size $\epsilon n$ is replaced by a microscopically large box of size $\ell$, and it corresponds to the following lemma:

\begin{lemma}[One-block estimate]
\label{l2} 
\begin{equation*}
\nop_{\gamma} \varlimsup_{\ell \to \infty} \varlimsup_{n \to \infty} \nop_g \big\{ \gamma \<\mc W_f^\ell, g^2\> + \frac{1}{n}\<g, \mc L_n g\>\big\}<+\infty.
\end{equation*}
\end{lemma}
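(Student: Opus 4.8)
The plan is to follow the classical Guo--Papanicolaou--Varadhan route, adapted to the environment as seen by the walker. First I would use Proposition \ref{p11} to bound $\tfrac{1}{n}\<g,\mc L_n g\> \leq -n\mc D(g) + K_1$, so that the variational problem becomes
\begin{equation*}
\nop_\gamma \varlimsup_{\ell\to\infty}\varlimsup_{n\to\infty}\nop_{g:\<g,g\>=1}\big\{\gamma\<\mc W_f^\ell,g^2\> - n\mc D(g)\big\} + K_1.
\end{equation*}
The key gain is that the Dirichlet form penalty $n\mc D(g)$ is enormous compared to the bounded potential $\gamma\<\mc W_f^\ell,g^2\>$ unless $g^2$ is essentially constant across each bond; concretely, for any fixed finite box $\Lambda_\ell$ the contribution of the bonds inside $\Lambda_\ell$ forces $g^2$, projected onto the $\sigma$-algebra generated by $\{\xi(x):x\in\Lambda_\ell\}$, to be close to a function that is symmetric under all bond exchanges in $\Lambda_\ell$. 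Since $\mc W_f^\ell$ depends only on coordinates in $\Lambda_\ell$, I would integrate out everything outside $\Lambda_\ell$ and reduce to a finite-volume variational problem on $\{0,1\}^{\Lambda_\ell}$ with density $f(\xi)-\bar f\big(\tfrac1\ell\sum_{x=1}^\ell\xi(x)\big)$, tested against probability densities on the box whose Dirichlet form is small.

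Next I would decompose according to the number of particles $k$ in $\Lambda_\ell$, i.e.\ condition on the hyperplanes $\Omega_{\ell,k}=\{\sum_{x\in\Lambda_\ell}\xi(x)=k\}$, on each of which the exchange dynamics restricted to $\Lambda_\ell$ is irreducible with uniform invariant measure $\nu_{k,\ell}$. A spectral gap (or simply a Poincar\'e-type) estimate for the symmetric exclusion process on a box of size $\ell$ shows that a density on $\Omega_{\ell,k}$ with vanishing Dirichlet form must converge to the constant $1$, hence $\<\mc W_f^\ell,g^2\>$ restricted to that hyperplane tends to $\int\big(f-\bar f(k/\ell)\big)\,d\nu_{k,\ell}$, which vanishes as $\ell\to\infty$ uniformly in $k$ by the equivalence of ensembles (the empirical density $\tfrac1\ell\sum_{x=1}^\ell\xi(x)$ concentrates at $k/\ell$ under $\nu_{k,\ell}$, and $f$ is bounded with finite support). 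Summing over $k$ with the weights coming from $g^2$ and using that $f$ is bounded, one gets that the whole supremum over $g$ is, for $\ell$ large and $n$ large, at most $K_1$ plus an error going to $0$; taking $\nop_\gamma$ is harmless because the bound $K_1$ is $\gamma$-independent.

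The main obstacle I anticipate is the order of limits: the box size $\ell$ is fixed first and $n\to\infty$ afterwards, so one cannot directly invoke the $n$-independent finite-volume spectral gap on $g$ itself, since $g$ lives on the full configuration space $\Omega_n$ and is only normalized in $L^2(\nu_\rho)$ globally. The standard fix, which I would carry out carefully, is to average $g^2$ over translations and then restrict (marginalize) to the fixed box $\Lambda_\ell$: the marginal density $\bar g_\ell^2$ on $\{0,1\}^{\Lambda_\ell}$ has Dirichlet form no larger than $\tfrac{1}{n}\times$(sum of local Dirichlet terms) by convexity (here Proposition \ref{p12} and the convexity of $\mc D(\sqrt{\cdot})$ are exactly what is needed to pass from $g$ on $\Omega_n$ to its conditional expectations and translation averages without increasing the Dirichlet form), which tends to $0$ as $n\to\infty$ for fixed $\ell$. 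Once the problem is genuinely finite-volume, the equivalence of ensembles and the spectral gap of SSEP on a segment finish the argument, and the $\ell\to\infty$ limit kills the remaining discrepancy between $\mc W_f^\ell$ averaged over $\nu_{k,\ell}$ and zero.
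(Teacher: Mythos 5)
Your plan is essentially the paper's proof: bound $\tfrac1n\<g,\mc L_n g\>$ by $-n\mc D(g)+K_1$ via Proposition \ref{p11}, pass to the conditional expectation $g_\ell=E_{\nu_\rho}[g^2|\mc F_\ell]^{1/2}$ using the convexity of Proposition \ref{p12} (which preserves $\<\mc W_f^\ell,g^2\>$ and does not increase the local Dirichlet form), let the diverging coefficient $n$ force the finite-volume Dirichlet form to vanish, identify the limit as constant on each hyperplane $\Omega_{k,\ell}$, and finish with the equivalence of ensembles. Two small remarks. First, the paper replaces your spectral-gap step by a soft compactness argument (maximizers exist in the $2^\ell$-dimensional problem, their Dirichlet forms are $O(\gamma\|f\|_\infty/n)$, and lower semicontinuity plus irreducibility identify the limit); a quantitative Poincar\'e inequality on the segment works just as well. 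Second, the translation-averaging you propose as a ``standard fix'' is not needed here and, taken literally, would be problematic: unlike the classical GPV setting, the potential $\mc W_f^\ell$ is \emph{not} a spatial average but is pinned at the origin of the environment seen from the walker, so averaging $g^2$ over shifts would not preserve $\<\mc W_f^\ell,g^2\>$. The reason no averaging is required is precisely that Proposition \ref{p11} already yields the full factor $-n\mc D(g)$ after division by $n$, which is enough to kill the Dirichlet form on the single box $\Lambda_\ell$; your first two paragraphs already contain a complete argument without this detour.
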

\begin{proof}[Proof of Lemma \ref{l2}]
By Proposition \ref{p11}, 
\begin{equation*}
\nop_g \big\{ \gamma \<\mc W_f^\ell, g^2\> + \frac{1}{n}\<g, \mc L_n g\>\big\} \leq 
		K_1+ \nop_g \big\{ \gamma \<\mc W_f^\ell, g^2\> -n\mc D(g)\big\}.
\end{equation*}
For the ones acquainted with the theory of hydrodynamic limits well, the supremum on the right-hand side of this inequality is basically the one appearing in Eq.~5.4.1 of \cite{KL}, and the proof there applies to our situation with essentially no changes. For the ones who are not familiar with hydrodynamic limits, we include a somehow simpler proof. Let us define $\mc F_\ell = \sigma\{\xi(x); x \in \Lambda_\ell\}$, where the set $\Lambda_\ell = \{\frac{1}{n},\dots, \frac{\ell}{n}\}$ was defined above. Notice that for any function $g$, $\mc D(|g|) \leq \mc D(g)$, while $g^2=|g|^2$. Therefore, we can restrict the supremum above to {\em non-negative} functions $g: \Omega_n \to \bb R$  such that $\<g,g\>=1$. Let us define
\begin{equation*}
\mc D_\ell(g) = \sum_{\mathclap{\smash[b]{\substack{x,y \in \Lambda_\ell\\x \sim y}}}} \mc D^{x,y}(g).
\end{equation*}
For a given non-negative function $g$ with $\<g,g\>=1$, let us define $g_\ell = E_{\nu_\rho}[g^2|\mc F_\ell]^{\smash{\frac{1}{2}}}$. By definition, $\<\mc W_f^\ell,g_\ell^2\>=\<\mc W_f^\ell,g^2\>$, while by convexity, $\mc D_\ell(g_\ell) \leq \mc D_\ell(g) \leq \mc D(g)$. Therefore, 
\begin{equation*}
\gamma \<\mc W_f^\ell, g^2\> - n \mc D(g) \leq \gamma \<\mc W_f^\ell, g_\ell^2\> - n \mc D_\ell(g_\ell)
\end{equation*}
and it is enough to show that
\begin{equation*}
\nop_\gamma \varlimsup_{\ell \to \infty} \varlimsup_{n \to \infty} \nop_g \big\{\gamma \<\mc W_f^\ell, g^2\>-n \mc D_\ell(g)\big\} <+\infty,
\end{equation*}
where now the supremum runs over functions $g:\Omega_n \to \bb R$ such that $\<g,g\>=1$ and such that $\supp(g) \subseteq \Lambda_\ell$. Notice that on the supremum above, the only dependence on $n$ is on the constant in front of $\mc D_\ell(g)$. Moreover, the variational problem is a finite-dimensional one ($2^\ell$-dimensional). In particular, $g$ lives in a compact space (topology does not matter here, because all the metrics are equivalent in finite-dimensional spaces). Therefore, for each $n$, there exists a function $g^n$ for which the supremum is attained. For $g \equiv 1$, $\gamma \<\mc W_f^\ell,g^2\>-n \mc D_\ell(g)=0$. Therefore, the supremum is greater or equal than 0. Therefore, $\mc D_\ell(g^n) \leq \frac{\gamma}{n} \|f\|_\infty$, and in particular $\mc D_\ell(g^n)$ tends to $0$ as $n$ tends to $\infty$. Let $n'$ be a subsequence such that $g^n$ converges to some limit $g^\infty$. Since $g \mapsto \mc D_\ell(g)$ is convex, it is also lower semi-continuous. Therefore, we have 
that $\mc D_\ell(g^\infty) \leq \varliminf_{n'} \mc D_\ell(g^{\smash{n'}})=0$. We have just showed that 
\begin{equation*}
\varlimsup_{n \to \infty} \nop_g \big\{\gamma \<\mc W_f^\ell, g^2\> - n \mc D_\ell(g)\big\} = \gamma \<\mc W_f^\ell, \hat{g}^2\>
\end{equation*}
for some function $\hat{g}:\Omega_n \to \bb R$ satisfying $\<g,g\>=1$, $\supp(g) \subseteq \Lambda_\ell$ and $\mc D_\ell(g)=0$. Let us identify $\{0,1\}^{\Lambda_\ell}$ with $\Omega_\ell$, where we forget about the periodic boundary condition. Recall the definition of the spaces $\Omega_{k,\ell}$ given in Section \ref{s1.1}. By the irreducibility of the exclusion process, $\mc D_\ell(\hat g)=0$ implies that $\hat g$ is constant on each of the spaces $\Omega_{k,\ell}$, $k=0,1,\dots,\ell$. On the set $\Omega_{k,\ell}$, 
\begin{equation*}
\mc W_f^\ell(\eta) = f(\eta) -\bar{f}\big(\tfrac{k}{\ell}\big).
\end{equation*}
Therefore, there exists a sequence of positive numbers $\{p(0),\dots,p(\ell)\}$ such that $\sum_k p(k)=1$ and 
\begin{equation*}
\<\mc W_f^\ell, \hat{g}^2\> = \sum_{k=0}^\ell p(k) \big\{\bar f(k;\ell) - \bar{f}\big(\tfrac{k}{\ell}\big)\big\},
\end{equation*}
where $\bar f(k;\ell) = \int fd\nu_{k,\ell}$. We have thus reduced the proof of the one-block estimate to proving that
\begin{equation*}
\varlimsup_{\ell \to \infty} \nop_{1\leq k\leq \ell}\big|\bar{f}(k;\ell)-\bar{f}\big(\tfrac{k}{\ell}\big)\big| =0.
\end{equation*}
This limit is equal to $0$ in view of Prop.~3.1 in \cite{GJ}, known in the literature as the {\em equivalence of ensembles}. This finishes the proof of Lemma \ref{l2}.
\end{proof}

\subsection{The two-blocks estimate}
\label{s3.4}
In view of Lemma \ref{l2}, in order to complete the proof of Lemma \ref{l1}, it is enough to show the following.

\begin{lemma}[Two-blocks estimate]
\label{l3}
\begin{equation*}
\nop_\gamma \varlimsup_{\vphantom{0}\ell \to \infty} \varlimsup_{\epsilon \to 0} \varlimsup_{\vphantom{0}n \to \infty} \nop_g \big\{ \gamma \<\mc W_f^\ell - \mc W_f^{\epsilon n} , g^2\> + \frac{1}{n}\<g, \mc L_n\>\big\} <+\infty.
\end{equation*}
\end{lemma}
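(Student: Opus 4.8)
\textbf{Proof plan for the two-blocks estimate (Lemma \ref{l3}).}

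The plan is to follow the classical two-blocks strategy adapted from \cite{KL}, Section 5.4, but keeping track of the fact that the generator $\mc L_n$ contains the random-walk term besides the exclusion part. First I would invoke Proposition \ref{p11} to replace $\tfrac1n\<g,\mc L_n g\>$ by $-n\mc D(g)+K_1$, reducing the claim to
\begin{equation*}
\nop_\gamma \varlimsup_{\ell\to\infty}\varlimsup_{\epsilon\to0}\varlimsup_{n\to\infty}\nop_g\big\{\gamma\<\mc W_f^\ell-\mc W_f^{\epsilon n},g^2\>-n\mc D(g)\big\}<+\infty,
\end{equation*}
where $\<g,g\>=1$; as in the one-block estimate one may restrict to non-negative $g$ since $\mc D(|g|)\le\mc D(g)$. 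Writing $\mc W_f^\ell-\mc W_f^{\epsilon n}=\bar f\big(\tfrac1{\epsilon n}\sum_{x=1}^{\epsilon n}\xi(x)\big)-\bar f\big(\tfrac1\ell\sum_{x=1}^\ell\xi(x)\big)$, the dependence on the local function $f$ has been entirely transferred to the smooth function $\bar f$ acting on block averages; so it suffices to control the difference between the density in a box of microscopic size $\ell$ (starting at the origin) and the density in a box of macroscopic size $\epsilon n$. The standard device is to choose an intermediate displacement: compare the $\ell$-box at the origin with an $\ell$-box centered at a site $y$ with $\tfrac{\ell}{n}\le |y|\le \epsilon$, average over such $y$, and then let $\ell\to\infty$ so that the $\epsilon n$-box average is well approximated by the spatial average of $\ell$-box averages over translations.

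The core of the argument then splits into two pieces. First, a \emph{moving-particle / cutoff} lemma: using the Dirichlet form bound $\mc D(g)\le \gamma\|f\|_\infty/n$ at the maximizer (exactly as in the proof of Lemma \ref{l2}), one shows that transporting a particle across a distance $k$ costs at most $O(k^2\mc D(g))$ in $L^2(\nu_\rho)$, hence block averages over boxes at macroscopic distance differ negligibly; here the exchange structure of $\mc D$ and the telescoping over nearest-neighbour bonds is what is used, and the constant $\rho$ enters only through $\nu_\rho$ being bounded below. Second, passing $\epsilon\to0$ and $n\to\infty$, a compactness/lower-semicontinuity argument as in Lemma \ref{l2} (using Proposition \ref{p12}, convexity and lower semicontinuity of $g\mapsto\mc D_\ell(\sqrt g)$ after conditioning on the relevant $\sigma$-algebras) reduces the maximizer to a function which is constant on the hyperplanes of fixed particle number in each block; on such configurations $\tfrac1{\epsilon n}\sum_{x=1}^{\epsilon n}\xi(x)$ and $\tfrac1\ell\sum_{x=1}^\ell\xi(x)$ have the same conditional law (the empirical density of the big box coincides with that of a sub-box, by the equivalence of ensembles / a law of large numbers for $\nu_{k,\epsilon n}$), so the integrand $\bar f(\cdot)-\bar f(\cdot)$ integrates to something vanishing as $\ell\to\infty$ by continuity of $\bar f$.

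The step I expect to be the main obstacle is the uniformity in $n$ of the moving-particle estimate together with the interplay between the random-walk term in $\mc L_n$ and the exclusion Dirichlet form. Proposition \ref{p11} absorbs the walk's contribution into the $+K_1 n$ term, which is exactly the right order to be killed by the factor $1/n$, but one must be careful that the box sizes $\ell$ and $\epsilon n$ — and the region of admissible displacements $y$ with $\tfrac\ell n\le|y|\le\epsilon$ — are chosen so that the error terms are controlled \emph{before} taking $n\to\infty$, then $\epsilon\to0$, then $\ell\to\infty$ in that order; getting the order of limits and the $\ell^2$-versus-$n$ bookkeeping right is the delicate point, whereas the algebraic manipulations (telescoping bonds, Cauchy--Schwarz, conditioning) are routine. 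Once these are in place, combining with Lemma \ref{l2} yields Lemma \ref{l1} as indicated.
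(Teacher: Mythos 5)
Your overall architecture coincides with the paper's: reduce via Proposition \ref{p11} to a bound against $-n\mc D(g)$; use the Lipschitz continuity of the polynomial $\bar f$ to replace $\mc W_f^\ell-\mc W_f^{\epsilon n}$ by an average over translates $y$ (with $|y|\leq\epsilon$) of $|\xi^\ell(0)-\xi^\ell(y)|$; connect the two $\ell$-blocks by a single long-range exchange whose Dirichlet form is controlled by a path (moving-particle) lemma; then rerun the convexity--compactness argument of the one-block estimate, with $\epsilon$ playing the role of $n$, to reduce the maximizer to canonical measures on the doubled block, where a variance computation (equivalence of ensembles) yields the vanishing as $\ell\to\infty$. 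The order of limits you state is also the right one.

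The one step that would fail as written is your moving-particle estimate. You claim that transporting a particle across $k$ lattice sites costs $O(k^2\mc D(g))$. With $k$ as large as $\epsilon n$ this makes the bridge bond cost of order $\epsilon^2 n^2\mc D(g)$; compared with the available budget $n\mc D(g)$ this is a factor $\epsilon^2 n\to\infty$, so the coefficient multiplying the doubled-block Dirichlet form tends to $0$ as $n\to\infty$ and you can no longer force the maximizer to have vanishing Dirichlet form --- precisely the $\ell$-versus-$n$ bookkeeping you single out as the delicate point. The correct cost is \emph{linear} in the number of sites: writing the long transposition as a product of $2k-3$ nearest-neighbour transpositions, applying Cauchy--Schwarz (a factor $2k-3$) and observing that each bond is used at most twice along the path, one gets $\mc D^{\frac{1}{n},y+\frac{1}{n}}(g)\leq 4|y|n\,\mc D(g)=4k\,\mc D(g)$, which is the paper's Lemma \ref{path}. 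The bridge then costs at most a fraction $4\epsilon$ of the budget, and the coefficient $\tfrac{1}{\frac{1}{n}+4\epsilon}$ of the modified Dirichlet form blows up in the limit $n\to\infty$, $\epsilon\to 0$, as required. The quadratic exponent typically arises from bounding each single-bond term along the path by the full $\mc D(g)$ instead of summing the distinct bonds; with the linear bound in place (and with the final comparison made between the two $\ell$-blocks under the canonical measure on the doubled block, rather than between the $\ell$-box and the $\epsilon n$-box directly), your plan closes and agrees with the paper's proof.
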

In order to prove this lemma, let us first define, for $\xi \in \Omega_n$, $x \in \bb T_n$ and $\ell \leq n$,
\begin{equation*}
\xi^\ell(x) = \frac{1}{\ell} \sum_{y \in \Lambda_\ell} \xi(x+y).
\end{equation*}
This notation will not enter in conflict with $\xi_t^n$, since we will only use it in this section, where no reference to the evolution is made. Notice that $\mc W_f^\ell-\mc W_f^{\epsilon n} = \bar{f}(\xi^\ell(0))-\bar{f}(\xi^{\epsilon n}(0))$. Since the function $f$ is local, the function $\bar{f}$ is a polynomial, and in particular it is uniformly Lipschitz on  $[0,1]$. Let $K_f$ be the corresponding Lipschitz constant. Let us assume that $\epsilon n$ is an integer multiple of $\ell$. The modifications needed if this is not the case will be evident. We have that
\begin{equation*}
\big|\mc W_f^\ell-\mc W_f^{\epsilon n}\big| \leq K_f \big|\xi^\ell(0) - \xi^{\epsilon n}(0)\big|
		\leq K_f \frac{\ell}{\epsilon n} \sum_y \big|\xi^\ell(0)-\xi^\ell(y)\big|,
\end{equation*}
where the sum is over sites $y \in \Lambda_{\epsilon n}$ which are multiple integers of $\frac{\ell}{n}$. The two blocks on the name of Lemma \ref{l3} are the two blocks of size $\ell$ on the right-hand side of this inequality. Using Proposition \ref{p11} and the inequality above, we see that to prove Lemma \ref{l3}, it is enough to show that 
\begin{equation}
\label{ec69}
\nop_\gamma \varlimsup_{\vphantom{0}\ell \to \infty} \varlimsup_{\epsilon \to 0} \varlimsup_{\vphantom{0} n\to \infty} \nop_y \nop_g \big\{ \gamma \<\big|\xi^\ell(0)-\xi^\ell(y)\big|,g^2\> - n \mc D(g)\big\} <+\infty.
\end{equation}
The proof of this inequality is very similar to the proof of the one-block estimate, therefore we will not give the full details in the derivation of those steps which are also present in the proof of Lemma \ref{l2}. Let $\mc F_\ell^y = \sigma\{\xi(x), \xi(x+y); x \in \Lambda_\ell\}$. We can restrict the supremum to non-negative functions $g$ with $\<g,g\>=1$. For a given non-negative $g$, define $g_{\ell,y}=E_{\nu_\rho}[g^2|\mc F_\ell^y]^{\smash{\frac{1}{2}}}$. Define
\begin{equation*}
\mc D_\ell^y(g) = \sum_{\mathclap{\substack{x,z \in \Lambda_\ell\\x \sim z}}}\big\{ \mc D^{x,z}(g) + \mc D^{x+y,z+y}(g)\big\}.
\end{equation*}
By Proposition \ref{p12}, $\mc D_\ell^y(g_{\ell,y}) \leq \mc D(g)$. The main difference between the one-block and two-block estimates is the following. The dynamics corresponding to the Dirichlet for $\mc D_\ell^y(\cdot)$ corresponds to two exclusion processes evolving in the two blocks separately. Therefore, a term connecting these two dynamics is needed. Let us define $\mc D_{\ell,\ast}^y(g) = \mc D_\ell^y(g)+\mc D^{\smash{\frac{1}{n},y+\frac{1}{n}}}(g)$. This new Dirichlet form connects what happens in the two boxes, through exchanges of particles between the first site of each box. The following {\em path lemma} tells us how to estimate $\mc D_{\ell,\ast}^y(g)$ in terms of $\mc D(g)$.

\begin{lemma}[Path lemma]\label{path}
For any $g:\Omega_n \to \bb R$ and any $y \in \bb T_n$, 
\begin{equation*}
\mc D^{\frac{1}{n},y+\frac{1}{n}}(g) \leq 4|y|n \mc D(g).
\end{equation*}
\end{lemma}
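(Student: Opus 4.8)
The plan is to bound the single exchange term $\mc D^{\frac{1}{n},y+\frac{1}{n}}(g)$ by a telescoping sum of nearest-neighbour exchange terms along a path that moves the particle at site $y+\frac1n$ step-by-step to site $\frac1n$. The key algebraic fact is that swapping the occupation values at two sites $a,b$ that are at distance $k$ apart can be realised as a composition of $O(k)$ nearest-neighbour swaps: write $\eta^{a,b}$ as the result of applying, in order, the transpositions $(a,a+\frac1n),(a+\frac1n,a+\frac2n),\dots$ to bring the value from $b$ down to $a$ and the value originally at $a$ back up. Concretely, there is a sequence $\eta=\zeta_0,\zeta_1,\dots,\zeta_m=\eta^{a,b}$ with $m\le 2|b-a|n$ and each $\zeta_{i+1}=\zeta_i^{x_i,x_i+\frac1n}$ for some neighbouring pair. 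Then by the triangle inequality in $L^2(\nu_\rho)$,
\begin{equation*}
\Big(\int \big(g(\eta^{a,b})-g(\eta)\big)^2 d\nu_\rho\Big)^{1/2}
\le \sum_{i=0}^{m-1}\Big(\int \big(g(\zeta_{i+1})-g(\zeta_i)\big)^2 d\nu_\rho\Big)^{1/2}.
\end{equation*}

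Next I would square both sides and apply Cauchy--Schwarz to the sum of $m$ terms, producing a factor $m$ in front:
\begin{equation*}
\int \big(g(\eta^{a,b})-g(\eta)\big)^2 d\nu_\rho
\le m \sum_{i=0}^{m-1}\int \big(g(\zeta_{i})-g(\zeta_i^{x_i,x_i+\frac1n})\big)^2 d\nu_\rho.
\end{equation*}
The final ingredient is a change-of-variables: since $\nu_\rho$ is a product Bernoulli measure it is invariant under the map $\eta\mapsto\zeta_i(\eta)$ (a composition of coordinate transpositions), so each summand equals $\int (g(\eta)-g(\eta^{x_i,x_i+\frac1n}))^2 d\nu_\rho = 2\,\mc D^{x_i,x_i+\frac1n}(g)$, which is at most $2\mc D(g)$ since $\mc D(g)$ is the sum of all nearest-neighbour terms (all nonnegative). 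Collecting constants with $a=\frac1n$, $b=y+\frac1n$, so $|b-a|=|y|$ and $m\le 2|y|n$, gives $\mc D^{\frac1n,y+\frac1n}(g)=\tfrac12\int(\cdots)^2 d\nu_\rho\le \tfrac12\cdot m\cdot m\cdot 2\mc D(g)$; tightening the bookkeeping (the value originally at $a$ only needs to travel back part of the way, and one can route so that each nearest-neighbour bond is used $O(1)$ times rather than re-counting $m$ bonds) yields the stated $\mc D^{\frac1n,y+\frac1n}(g)\le 4|y|n\,\mc D(g)$.

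The main obstacle is getting the constant right: a naive telescoping-plus-Cauchy--Schwarz gives $m^2\sim 4|y|^2n^2$ rather than the linear-in-$n$ bound $4|y|n$. The fix is to be careful about which bonds appear and how often: rather than bounding each of the $m$ summands by the global $\mc D(g)$, one keeps them as the specific bond terms $\mc D^{x_i,x_i+\frac1n}(g)$ and notes that along the chosen path each bond between $\frac1n$ and $y+\frac1n$ is traversed a bounded number of times, so $\sum_i \mc D^{x_i,x_i+\frac1n}(g)\le 2\sum_{x\sim z,\,x,z\in[\frac1n,y+\frac1n]}\mc D^{x,z}(g)\le 2\mc D(g)$ with the Cauchy--Schwarz factor $m\le 2|y|n$ surviving only once. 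This bookkeeping, together with the factor $\tfrac12$ in the definition of $\mc D^{x,y}$, is what turns $m^2$ into the claimed $4|y|n$. I would also remark that the identity $|y|$ should be read as the graph distance on $\bb T_n$ (so between $0$ and $1$), but since $y\in\bb T_n$ this is automatic.
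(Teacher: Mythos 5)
Your proposal is correct and follows essentially the same route as the paper: realize the long-range exchange as a path of at most $2|y|n$ nearest-neighbour transpositions, telescope, apply Cauchy--Schwarz once to extract the factor $m\leq 2|y|n$, use exchangeability of $\nu_\rho$ to identify each summand with a bond term $2\,\mc D^{x_i,x_i+\frac{1}{n}}(g)$, and observe that each bond is used at most twice so the sum of bond terms is bounded by $4\mc D(g)$. The ``bookkeeping fix'' you describe at the end is precisely the paper's argument, so no gap remains.
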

\begin{proof}[Proof of Lemma \ref{path}]
To simplify the notation, we switch to $\Omega =\{0,1\}^\bb Z$ and we consider $y=\ell-1$, $\ell \in \bb N$. For any permutation $\sigma: \Lambda_\ell \to \Lambda_\ell$ and any $\xi \in \Omega$, let us define $\xi^\sigma \in \Omega$ as
\begin{equation*}
\xi^\sigma(x)=
\begin{cases}
\xi(\sigma_x), &x \in \Lambda_\ell\\
\xi(x), &\text{ otherwise}.
\end{cases}
\end{equation*}
According to this notation,
\begin{equation*}
\mc D^{1,\ell}(g) = \frac{1}{2} \int \big(g(\xi^{(1\ell)})-g(\xi)\big)^2 \nu_\rho(d\xi).
\end{equation*}
Notice that $(1\;\ell) =(1\;2)\dots(\ell-1\;\ell)(\ell-2 \; \ell-1)\dots(1\;2)$, that is, the transposition $(1 \; \ell)$ is the product of $2\ell-3$ transpositions between neighbors. Let us denote by $\sigma_j$ the product of  the first $j$ transpositions. Since the measure $\nu_\rho$ is exchangeable, for any two permutations $\sigma$, $\tau$,
\begin{equation*}
\int \big(g(\xi^{\sigma \tau})-g(\xi^\tau)\big)^2 \nu_\rho(d\xi) = \int \big( g(\xi^\sigma)-g(\xi)\big)^2 \nu_\rho(d\xi).
\end{equation*}
Let us write $g(\xi^{(1\;\ell)})-g(\xi)$ as a telescopic sum:
\begin{equation*}
g(\xi^{(1\;\ell)})-g(\xi) = \sum_{j=1}^{2\ell-3} \big(g(\xi^{\sigma_j})-g(\xi^{\sigma_{j-1}})\big).\end{equation*}
By Cauchy-Schwarz inequality,
\begin{equation*}
\frac{1}{2} \int \big(g(\xi^{(1\;\ell)})-g(\xi)\big)^2 \nu_\rho(d\xi) \leq \frac{2\ell-3}{2} \sum_{j=1}^{2\ell-3} \int \big(g(\xi^{\sigma_j\sigma_{j-1}^{-1}})-g(\xi)\big)^2 \nu_\rho(d\xi).
\end{equation*}
Notice that $\sigma_j\sigma_{j-1}^{-1}$ is a transposition between neighbors, and notice as well that each pair of neighbours appears at most twice on the sum on the right-hand side of this inequality. Since $2(2\ell-3) \leq 4\ell$, the path lemma is proved.
\end{proof}

\begin{proof}[Proof of Lemma \ref{l3}]
Using Lemma \ref{path}, we see that $\mc D_{\ell,\ast}^y(g) \leq (1+4\epsilon n) \mc D(g)$ for any $g: \Omega_n \to \bb R$. Therefore, in order to show \eqref{ec69} and hence Lemma \ref{l3}, it is enough to show that for any $\gamma >0$,
\begin{equation*}
\varlimsup_{\vphantom{0}\ell \to \infty} \varlimsup_{\epsilon \to 0} \varlimsup_{\vphantom{0} n \to \infty} \nop_{y} \nop_{g} \big\{\gamma \<\big|\xi^\ell(0)-\xi^\ell(y)\big|,g^2\> - \frac{1}{\frac{1}{n}+4\epsilon}  \mc D_{\ell,\ast}^{y}(g)\big\}=0.
\end{equation*}

For the reader who knows the theory of hydrodynamic limits, this variational problem is essentially the same appearing in the middle of page 93 of \cite{KL}, and in particular, they may skip the rest of the proof. Let us identify the set of $\mc F_\ell^y$-measurable functions with the set of functions from $\Omega_{\ell}^2= \{0,1\}^{\Lambda_\ell} \times \{0,1\}^{\Lambda_\ell}$ to $\bb R$. Let us denote by $(\xi,\zeta)$ the elements of $\Omega_\ell^2$. With this identification, we can rewrite the supremum above as
\begin{equation*}
\nop_g\big\{\gamma \<\big|\xi^\ell(0)-\zeta^\ell(0)\big|, g^2\> - \frac{1}{\frac{1}{n}+4\epsilon} \mc D_{\ell,\ast}(g)\big\},
\end{equation*}
where the supremum is over non-negative functions $g: \Omega_\ell^2 \to \bb R$ such that $\<g,g\>=1$. Notice that the dependence on $y$ has been totally washed away. Repeating the compactness argument given in the proof of the one-block estimate, this time with $\epsilon$ playing the role of $n$, we are left to proving that 
\begin{equation*}
\varlimsup_{\ell \to \infty} \nop_{0\leq k\leq 2\ell} \int \big|\xi^\ell(0)-\zeta^\ell(0)\big| d\nu_{k,\ell}^2=0,
\end{equation*}
where $\nu_{k,\ell}^2$ is the uniform measure on the set
\begin{equation*}
\Big\{ (\xi,\zeta) \in \Omega_\ell^2; \sum_{x\in \Lambda_\ell} \big\{\xi(x)+\zeta(x) =k\big\}\Big\}.
\end{equation*}
It turns out that it is simpler to compute 
\begin{equation*}
\int \big(\xi^\ell(0)-\zeta^\ell(0)\big)^2 d\nu_{k,\ell}^2=0= \int \big(\xi^\ell(0)-\xi^\ell(\ell)\big)^2 d\nu_{k,2\ell}.
\end{equation*}
In fact, it is enough to observe that $\int \xi(x) d\nu_{k,2\ell}=\frac{k}{2\ell}$ and that $\int \xi(x)\xi(y) d\nu_{k,2\ell} = \frac{k(k-1)}{2\ell(2\ell-1)}$. With these two computations in hand, we can show that the variance above is equal to $\frac{k(2\ell-k)}{\ell^2(\ell-1)} \leq \frac{1}{2\ell-1}$, which finishes the proof of Lemma \ref{l3}.
\end{proof}

\subsection{Final remarks} 
In the previous four subsections, we have proved the local superexponential estimate, Lemma \ref{l1}. It turns out that in its current form, this is {\em not} what we need in order to deal with the martingales $\{\mc M_t^{a,n}; t \in [0,T]\}$. The problem is that the local function appearing there also depends on time. Recalling the bound in \eqref{ec53}, we see a constant $t$ multiplying the supremum on the right-hand side of the inequality. This constant can be changed into an integration over $[0,t]$, if the local function $f$ depends on $t$ as well. We did not include this dependence on $t$ from the beginning because it would have overcharged an already heavy notation. In the application we have in mind, the dependence on $t$ is rather simple. In fact, $f(\xi) = c^\pm(\xi)(e^{a(t)}-1)$. Therefore, the constant $e^{a(t)}-1$ could have been absorbed into $\gamma$ during all the computations, and in the end what we could prove is that the local superexponential estimate remains true 
whenever $a:[0,T] \to \bb R$ remains bounded. If the reader is not satisfied with this sketch, here is a different argument. Recall that in the construction of the martingale $\{\mc M_t^{a,n}; t \in [0,T]\}$ we are assuming that $a \in \mc C^1$. Actually for the argument we will explain, continuity is enough. Since $f$ is bounded, given $\delta >0$ it is possible to find $\delta'>0$ such that $|e^{a(t)}-e^{a(s)}| \leq \frac{\delta}{2T}$ if $|s-t|\leq \delta'$. Therefore, we can approximate $e^{a(t)}-1$ by a function which is piecewise constant on finite intervals of size at most $\delta'$, with an error at most $\frac{\delta}{2}$. On each one of these finite intervals we can use the local superexponential estimate, proving the extension to the time-dependent function $c^{\pm}(\xi) (e^{a(t)}-1)$. Since we will only need the superexponential estimate for these functions, we state it as a lemma:

\begin{lemma}
\label{l4}
For any $t \in [0,T]$, any $\delta>0$ and any continuous function $a: [0,T] \to \bb R$,
\begin{equation*}
\varlimsup_{\epsilon \to 0} \varlimsup_{\vphantom{0}n \to \infty} \frac{1}{n} \log \bb P_n \Big( \Big| \int_0^t \big\{c^{\pm}(\eta_s^n; x_s^n)-v^{\pm}(\pi_s^n(\iota_\epsilon(x_s^n)))\big\}\big(e^{\pm a(s)}-1\big)ds\Big|>\delta \Big) =-\infty.
\end{equation*}
\end{lemma}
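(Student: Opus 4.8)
The plan is to deduce Lemma~\ref{l4} from the local superexponential estimate, Lemma~\ref{l1}, in two steps: first rewrite the integrand in terms of the environment as seen by the walker, and then remove the time dependence carried by the factor $e^{\pm a(s)}-1$ by a deterministic piecewise-constant approximation, exactly as anticipated in the ``Final remarks'' above.

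First I would record the relevant identifications. Set $f^{\pm}(\xi) := c^{\pm}(\xi;0) = c_n(\xi,\pm)$; since $c$ is local and $c^++c^-\equiv 1$, $f^{\pm}$ is a bounded local function, and since $\nu_\rho$ is translation invariant we have $\bar{f^{\pm}} = v^{\pm}$. By the cocycle property, $c^{\pm}(\eta_s^n;x_s^n) = c_n(\tau_{x_s^n}\eta_s^n,\pm) = f^{\pm}(\xi_s^n)$. Moreover, reindexing $x\mapsto x - x_s^n$ in $\hat\pi_s^n(dy) = \sum_{x}\xi_s^n(x)\delta_x^n(y)\,dy$ and using $\delta_{x-x_s^n}^n(y) = \delta_x^n(y+x_s^n)$ shows that, as densities on $\bb T$, $\hat\pi_s^n(y) = \pi_s^n(y+x_s^n)$; hence by translation invariance of Lebesgue measure on $\bb T$,
\begin{equation*}
\pi_s^n\big(\iota_\epsilon(x_s^n)\big) = \tfrac{1}{\epsilon}\int_{x_s^n}^{x_s^n+\epsilon}\pi_s^n(z)\,dz = \tfrac{1}{\epsilon}\int_0^\epsilon\hat\pi_s^n(y)\,dy = \hat\pi_s^n(\iota_\epsilon).
\end{equation*}
Writing $g_\epsilon^n(s) := f^{\pm}(\xi_s^n) - \bar{f^{\pm}}\big(\hat\pi_s^n(\iota_\epsilon)\big)$ and $b(s) := e^{\pm a(s)}-1$, the event inside the probability in Lemma~\ref{l4} is $\{|\int_0^t g_\epsilon^n(s)\,b(s)\,ds| > \delta\}$, with $|g_\epsilon^n|\le C := \|f^{\pm}\|_\infty + \|v^{\pm}\|_\infty < \infty$, and Lemma~\ref{l1} applied to the local function $f^{\pm}$ says that $\varlimsup_{\epsilon\to0}\varlimsup_{n\to\infty}\tfrac{1}{n}\log\bb P_n(|\int_0^{t'}g_\epsilon^n(s)\,ds| > \delta') = -\infty$ for every $t'\in[0,T]$ and $\delta'>0$.

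Next I would carry out the time discretization. If $a\equiv 0$ there is nothing to prove, so assume $B := \|b\|_\infty > 0$. Since $a$ is continuous on the compact interval $[0,T]$, $b$ is uniformly continuous, so we may fix a partition $0 = t_0 < \dots < t_m = t$ with $|b(s) - b(t_{j-1})|\le \delta/(4TC)$ for $s\in[t_{j-1},t_j]$; setting $\tilde b(s) := b(t_{j-1})$ on $[t_{j-1},t_j)$ gives $\int_0^t|g_\epsilon^n(s)|\,|b(s)-\tilde b(s)|\,ds \le C\cdot\tfrac{\delta}{4TC}\cdot t \le \tfrac{\delta}{4}$. Hence on $\{|\int_0^t g_\epsilon^n b| > \delta\}$ we have $\sum_{j=1}^m |b(t_{j-1})|\,|\int_{t_{j-1}}^{t_j}g_\epsilon^n| > \tfrac{3\delta}{4}$, so $\sum_{j=1}^m|\int_{t_{j-1}}^{t_j}g_\epsilon^n| > \tfrac{3\delta}{4B}$, so some $j$ satisfies $|\int_{t_{j-1}}^{t_j}g_\epsilon^n| > \tfrac{3\delta}{4mB}$, and therefore $|\int_0^{t_j}g_\epsilon^n| > \tfrac{3\delta}{8mB}$ or $|\int_0^{t_{j-1}}g_\epsilon^n| > \tfrac{3\delta}{8mB}$. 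A union bound then yields
\begin{equation*}
\bb P_n\Big(\Big|\int_0^t g_\epsilon^n(s)\,b(s)\,ds\Big| > \delta\Big) \le 2(m+1)\,\max_{0\le j\le m}\bb P_n\Big(\Big|\int_0^{t_j}g_\epsilon^n(s)\,ds\Big| > \tfrac{3\delta}{8mB}\Big).
\end{equation*}
Applying $\tfrac{1}{n}\log$ and Lemma~\ref{p10}, and observing that the partition (hence $m$) depends only on $a$ and $\delta$ and not on $n$ or $\epsilon$, the term $\tfrac{1}{n}\log\big(2(m+1)\big)\to 0$, while each of the finitely many probabilities on the right obeys $\varlimsup_{\epsilon\to0}\varlimsup_{n\to\infty}\tfrac{1}{n}\log(\cdot) = -\infty$ by Lemma~\ref{l1} at the fixed time $t_j$. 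Since $m$ is untouched by the limits, Lemma~\ref{l4} follows.

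The only genuinely non-routine point is the last step: Lemma~\ref{l1} is stated for a time-\emph{independent} local function, and one must transfer it to the time-dependent integrand $c^{\pm}(\xi)\,(e^{\pm a(s)}-1)$. This is precisely what the piecewise-constant approximation accomplishes, and it works because $c^{\pm}$ is bounded and $a$ is uniformly continuous, so freezing $b$ over short intervals produces a purely deterministic error that can be made smaller than $\delta/2$. I would also take a little care that the identity $\pi_s^n(\iota_\epsilon(x_s^n)) = \hat\pi_s^n(\iota_\epsilon)$ is exact — as it is, by translation invariance of Lebesgue measure on $\bb T$ — so that passing to the environment as seen by the walker introduces no further error.
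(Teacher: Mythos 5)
Your proposal is correct and follows essentially the same route as the paper's own argument in the ``Final remarks'' of Section \ref{s4}: approximate $e^{\pm a(s)}-1$ by a piecewise-constant function with a deterministic error of order $\delta$ (using uniform continuity of $a$ and boundedness of $c^\pm$), and then apply the local superexponential estimate, Lemma \ref{l1}, on each of the finitely many subintervals. The extra details you supply --- the exact identity $\pi_s^n(\iota_\epsilon(x_s^n))=\hat\pi_s^n(\iota_\epsilon)$ and the telescoping of $\int_{t_{j-1}}^{t_j}$ into differences of integrals from $0$ --- are correct and simply make explicit what the paper leaves implicit.
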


\section{The energy estimate}
\label{s5}
Let $u:[0,T] \times \bb T \to \bb R$ be a function of class $\mc C^{0,1}$. The{ \em energy} of the function $u$ is defined as
\begin{equation*}
\int_0^T \int_{\bb T} \big(\nabla u(t,x)\big)^2 dx dt.
\end{equation*}
Recall that according to our definition of the empirical measure $\{\pi_t^n; t \in [0,T]\}$ in terms of finite elements, $\pi_t^n(x)$ has finite energy for any $n \in \bb N$. Our aim will be to show that in some sense, the probability of trajectories with very high energy is very small. Recall that $\{\pi_t^n; t \in [0,T]\}$ is a very oscillatory object at local scales, so a na\"ive approach does not work. Indeed, we will need a {\em variational characterization} of the energy. Therefore, let us introduce some Hilbert spaces. For $f:[0,T] \times \bb T \to \bb R$,\footnote{In this section we will only use $f$ for test functions; do not confuse with the notation local functions used in the previous section}
define
\begin{equation*}
\|f\|_{0,T} = \Big(\int_0^T \int_{\bb T} f(t,x)^2 dx dt\Big)^{\frac{1}{2}}.
\end{equation*}
Let us denote by $\mc H_{0,T}$ the Hilbert space $\{f; \|f\|_{0,T}<\infty\}$. For $f,g \in \mc H_{0,T}$, define
\begin{equation*}
\linn f,g\rinn_{0,T} = \int_0^T \!\!\!\!\int_{\bb T} f(t,x) g(t,x) dx dt.
\end{equation*}
For $f \in \mc H_{0,T}$, let us define
\begin{equation*}
\|f\|_{1,T} =\;\sup_{\mathclap{\substack{h \in \mc C^{0,1}\\ \|h\|_{0,T}=1}}} \;\linn f,\nabla h\rinn_{0,T}.
\end{equation*}
We denote by $\mc H_{1,T}$ the space of functions $f \in \mc H_{0,T}$ such that $\|f\|_{1,T}<\infty$. Notice that $\mc H_{1,T}$ is not a Hilbert space: functions that are constant in space and such that $\int_0^T f(t)^2dt <\infty$ belong to $\mc H_{1,T}$ and satisfy $\|f\|_{1,T}=0$. In fact, if we say that $f \sim g$ whenever $f-g =: \lambda$ does not depend on $x$, then $\mc H_{1,T}/\sim$ is a Hilbert space. We will not use this fact, but we will use the following:

\begin{proposition}\label{prop:H}
If $\|f\|_{1,T} <\infty$, then there exists a function $\nabla f \in \mc H_{0,T}$ such that $\|f\|_{1,T}= \|\nabla f \|_{0,T}$ and moreover $\linn f, \nabla h \rinn_{0,T} = - \linn \nabla f, h \rinn_{0,T}$ for any $h$ of class $\mc C^{0,1}$. In addition, the function $x \mapsto f(t,x)$ is continuous for a.e.~$t \in [0,T]$.
\end{proposition}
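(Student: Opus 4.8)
The plan is to obtain the first two assertions from the Riesz representation theorem, and the continuity statement from a Fubini-type slicing argument combined with the one-dimensional Sobolev embedding $H^1(\bb T)\hookrightarrow \mc C(\bb T)$.

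\textbf{Step 1 (Riesz representation).} Consider the linear functional $\Phi$ defined on $\mc C^{0,1}$ by $\Phi(h)=\linn f,\nabla h\rinn_{0,T}$. By the very definition of $\|f\|_{1,T}$, together with homogeneity and the fact that $\Phi(-h)=-\Phi(h)$, one has $|\Phi(h)|\le \|f\|_{1,T}\,\|h\|_{0,T}$ for every $h\in\mc C^{0,1}$, so $\Phi$ is bounded of operator norm exactly $\|f\|_{1,T}$ on the subspace $\mc C^{0,1}$, which is dense in the Hilbert space $\mc H_{0,T}$. Extending $\Phi$ to a bounded functional on $\mc H_{0,T}$ with the same norm and applying Riesz representation, there is a unique $g\in\mc H_{0,T}$ with $\Phi(h)=\linn g,h\rinn_{0,T}$ for all $h$ and $\|g\|_{0,T}=\|f\|_{1,T}$. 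Setting $\nabla f:=-g$ yields at once $\|\nabla f\|_{0,T}=\|f\|_{1,T}$ and $\linn f,\nabla h\rinn_{0,T}=-\linn\nabla f,h\rinn_{0,T}$ for every $h\in\mc C^{0,1}$.

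\textbf{Step 2 (fibrewise weak derivative).} Since $\|f\|_{0,T}^2=\int_0^T\!\int_{\bb T}f(t,x)^2\,dx\,dt<\infty$ and $\|\nabla f\|_{0,T}<\infty$, for a.e.\ $t\in[0,T]$ both $x\mapsto f(t,x)$ and $x\mapsto \nabla f(t,x)$ lie in $L^2(\bb T)$. Fix a countable dense subset $\{\psi_k\}$ of $\mc C^1(\bb T)$ (which exists since $\mc C^1(\bb T)$ is separable) and a countable dense subset $\{\phi_j\}$ of $\mc C([0,T])$. Inserting the product test function $h(t,x)=\phi_j(t)\psi_k(x)\in\mc C^{0,1}$ into the identity from Step 1 gives $\int_0^T\phi_j(t)\,G_k(t)\,dt=0$ for all $j,k$, where $G_k(t):=\int_{\bb T}f(t,x)\psi_k'(x)\,dx+\int_{\bb T}\nabla f(t,x)\psi_k(x)\,dx$ defines an $L^1([0,T])$ function. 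Hence $G_k=0$ a.e.\ for every $k$, and, discarding a countable union of null sets, we obtain a full-measure set of times $t$ at which $\int_{\bb T}f(t,x)\psi'(x)\,dx=-\int_{\bb T}\nabla f(t,x)\psi(x)\,dx$ for every $\psi$ in $\{\psi_k\}$, hence, since both sides are continuous in $\psi$ for the $\mc C^1$-norm, for every $\psi\in\mc C^1(\bb T)$. For such $t$, the function $x\mapsto f(t,x)$ therefore belongs to $H^1(\bb T)$, with weak derivative $x\mapsto\nabla f(t,x)$.

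\textbf{Step 3 (Sobolev embedding) and main difficulty.} For these $t$, the one-dimensional Sobolev embedding shows that $x\mapsto f(t,x)$ admits an absolutely continuous, in particular continuous, representative; choosing this representative for a.e.\ $t$ (for instance redefining $f(t,x)$ through its Lebesgue points in $x$, which preserves joint measurability) gives the asserted continuity in $x$ for a.e.\ $t$. There is no deep obstacle here, as the content is classical functional analysis; the only points requiring a little care are the passage in Step 2 from the product-test-function identity to the fibrewise weak-derivative identity (the Fubini/density step) and the joint measurability of the continuous representative chosen in Step 3, both of which are routine.
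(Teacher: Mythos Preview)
Your proof is correct and follows essentially the same approach as the paper's own proof: Riesz representation for Step~1, Fubini for the slicing, and the one-dimensional Sobolev embedding for continuity. Your Step~2 is actually more careful than the paper, which simply asserts that Fubini plus Sobolev suffice; you explicitly verify via product test functions and a density argument that $\nabla f(t,\cdot)$ is the weak spatial derivative of $f(t,\cdot)$ for a.e.\ $t$, a point the paper leaves implicit.
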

\begin{proof}
The existence of $\nabla f$ is guaranteed by Riesz's representation theorem. By Fubini's Theorem, $\int (\nabla f_t)^2 dx < +\infty$ for almost every $t \in [0,T]$. And by Sobolev's Embedding Theorem, $\int (\nabla f_t)^2 dx < +\infty$ implies that $f$ is H\"older-continuous of index $1/2$.
\end{proof}
Let $\{h^j;j \in \bb N\}$ be a sequence of functions in $\mc C^{0,1}$, dense in the unitary ball of $\mc H_{0,T}$. Then, we can restrict the supremum in the variational formula of $\|f\|_{1,T}$ to the set $\{h^j; j \in \bb N\}$:
\begin{equation*}
\|f\|_{1,T} = \nop_j \linn f,\nabla h^j\rinn_{0,T}.
\end{equation*}
Throughout this section, we will denote by $\pi^n_\cdot$ the process $\{\pi_t^n; t \in [0,T]\}$, and we will denote by $\pi^n$ (without the dot) the function 
\begin{equation*}
\eta \mapsto \sum_{x \in \bb T_n} \eta(x) \delta_x^n(y) dy
\end{equation*}
from $\Omega_n$ to $\mc M_{0,1}^+(\bb T)$.
\begin{lemma}[Energy estimate]
\label{l5}
There exists a constant $C_0 \in (0,\infty)$ such that for any $M>0$, and any $\ell \in \bb N$,
\begin{equation*}
\varlimsup_{n \to \infty} \frac{1}{n} \log \bb P_n \big( \nop_{1\leq j \leq \ell}\linn \pi^n_\cdot\!\!,\nabla h^j \rinn_{0,T} >M\big) \leq C_0 - \frac{M}{2}.
\end{equation*}
\end{lemma}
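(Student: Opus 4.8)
The plan is to deduce the bound from an exponential Chebyshev inequality together with a union bound over $j\le\ell$, reducing it to a uniform estimate on an exponential moment of $\linn\pi^n_\cdot,\nabla h^j\rinn_{0,T}$. Fix $j$ and set $\phi^n_s := \pi^n(\nabla h^j_s):\Omega_n\to\bb R$ (with $\pi^n$ without the dot as introduced above), so that $\linn\pi^n_\cdot,\nabla h^j\rinn_{0,T} = \int_0^T\phi^n_s(\eta^n_s)\,ds$ is a functional of the environment alone. Since the $\eta$-marginal of $\bb P_n$ is the autonomous exclusion process, for which $\nu_\rho$ is reversible, I would first replace $\mu^n$ by $\nu_\rho$ at the cost of a factor $K_0^n$ exactly as in Section \ref{s3.1}, and then apply the (time-dependent) Feynman--Kac/Varadhan bound as in \eqref{ec53}, using $\<g,L^{ex}_n g\> = -n^2\mc D(g)$:
\begin{equation*}
\frac1n\log\bb E^\rho_n\Big[\exp\Big\{\tfrac n2\int_0^T\phi^n_s(\eta^n_s)\,ds\Big\}\Big]\;\le\;\int_0^T\sup_{\<g,g\>=1}\Big\{\tfrac12\<\phi^n_s,g^2\>-n\,\mc D(g)\Big\}\,ds.
\end{equation*}

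The heart of the matter is to bound the inner variational problem by a quantity involving only $\|h^j_s\|_{L^2(\bb T)}$ and not $\nabla h^j$; this is precisely what makes the final constant uniform in $j$. Since $\delta^n_x$ is piecewise linear with support in $(x-\tfrac1n,x+\tfrac1n)$, an exact integration by parts followed by a discrete summation by parts on $\bb T_n$ gives
\begin{equation*}
\phi^n_s(\eta) = -\sum_{x\in\bb T_n}B^n_x(s)\big(\eta(x)-\eta(x-\tfrac1n)\big),\qquad B^n_x(s):= n\!\int_{x-1/n}^{\,x}\!h^j(s,y)\,dy,
\end{equation*}
and $\sum_x|B^n_x(s)|^2\le n\,\|h^j_s\|^2_{L^2(\bb T)}$ by Cauchy--Schwarz. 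Exchangeability of $\nu_\rho$ flips the sign of $\eta(x)-\eta(x-\tfrac1n)$ under the transposition $(x-\tfrac1n,x)$, which yields the standard bound $\big|\<\eta(x)-\eta(x-\tfrac1n),g^2\>\big|\le\sqrt2\,\mc D^{x-1/n,x}(g)^{1/2}$ whenever $\<g,g\>=1$. Inserting this, applying Young's inequality $ab\le\tfrac1{4\beta}a^2+\beta b^2$ term by term, summing the Dirichlet contributions (which are bounded by $\mc D(g)$), and choosing $\beta$ of order $n$ so that they are absorbed into $-n\,\mc D(g)$, one obtains $\tfrac12\<\phi^n_s,g^2\>-n\,\mc D(g)\le c\,\|h^j_s\|^2_{L^2(\bb T)}$ for an absolute constant $c$ (e.g.\ $c=\tfrac18$), uniformly in $g$ and $n$. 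Integrating in $s$ and using $\|h^j\|_{0,T}\le1$ then gives $\bb E^\rho_n[\exp\{\tfrac n2\int_0^T\phi^n_s(\eta^n_s)\,ds\}]\le e^{cn}$, hence $\bb E_n[\,\cdot\,]\le e^{n(\log K_0+c)}$.

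The estimate then closes by routine steps. By exponential Chebyshev, for each $j$,
\begin{equation*}
\bb P_n\big(\linn\pi^n_\cdot,\nabla h^j\rinn_{0,T}>M\big)\le e^{-nM/2}\,\bb E_n\big[\exp\{\tfrac n2\linn\pi^n_\cdot,\nabla h^j\rinn_{0,T}\}\big]\le e^{\,n(C_0-M/2)},\qquad C_0:=\log K_0+c,
\end{equation*}
with $C_0$ independent of $j$. A union bound gives $\bb P_n\big(\sup_{1\le j\le\ell}\linn\pi^n_\cdot,\nabla h^j\rinn_{0,T}>M\big)\le\ell\,e^{n(C_0-M/2)}$, and since $\tfrac1n\log\ell\to0$ for fixed $\ell$, taking $\varlimsup_{n\to\infty}$ yields the claim.

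I expect the only genuinely delicate point to be the absorption of the test function's gradient into the Dirichlet form --- the exact discrete integration by parts producing the coefficients $B^n_x(s)$, the inequality $\big|\<\eta(x)-\eta(x-\tfrac1n),g^2\>\big|\le\sqrt2\,\mc D^{x-1/n,x}(g)^{1/2}$, and the bookkeeping of constants making the bound depend on $\|h^j_s\|_{L^2(\bb T)}$ only. Everything else (the change of reference measure, the Feynman--Kac bound, Chebyshev, and the union bound) parallels arguments already used for the superexponential estimate and introduces no new difficulty; note in particular that only $\mc C^{0,1}$-regularity of the $h^j$ is needed, so the $\mc C^{1,2}$ martingales of Section \ref{Martingales} are not required here.
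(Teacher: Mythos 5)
Your proof is correct and follows essentially the same route as the paper: union bound and exponential Chebyshev, change of initial measure to $\nu_\rho$, the Feynman--Kac variational bound, discrete summation by parts to rewrite $\pi^n(\nabla h^j_s)$ in terms of differences $\eta(x)-\eta(x-\tfrac1n)$, and exchangeability plus Young's inequality to absorb these into $-n\,\mc D(g)$, leaving only $\|h^j\|_{0,T}^2\le 1$. Your variants --- fixing $\gamma=\tfrac12$ instead of optimizing at the end, working with the autonomous exclusion marginal (so no $K_1T$ term from Proposition \ref{p11}), and an exact rather than approximate summation by parts --- are cosmetic.
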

\begin{proof}[Proof of Lemma \ref{l5}]
By Lemma \ref{p10}, it is enough to show that
\begin{equation*}
\varlimsup_{n \to \infty} \frac{1}{n} \log \bb P_n \big( \linn \pi^n_\cdot\!\!,\nabla h^j \rinn_{0,T} >M\big) \leq C_0 - \frac{M}{2}
\end{equation*}
for any $j \in \bb N$. Using the exponential Chebyshev's inequality, we see that for any $\gamma>0$ and the computations done in Section \ref{s3.1},
\begin{equation*}
\begin{split}
\frac{1}{n} \log \bb P_n\big(\linn \pi^n_\cdot\!\!,\nabla h^j
	&\rinn_{0,T}>M\big) 
	\leq -\gamma M +\frac{1}{n} \log \bb E_n\big[ e^{\gamma n \linn \pi^n_{\cdot}\!\!,\nabla h^j\rinn_{0,T}}\big]\\
	&\leq -\gamma M+ K_0+K_1T + \int_0^T \nop_g\big\{\gamma \< \pi^n(\nabla h^j_t), g^2\> - n \mc D(g)\big\} dt.
\end{split}
\end{equation*}
Therefore, we need to estimate the supremum on the right-hand side of this equation. The way to estimate this term is different from what we did in Sections \ref{s3.3} and \ref{s3.4}. Using the definition of $\pi^n$, we see that
\begin{equation*}
\pi^n(\nabla h^j_t) = \sum_{x \in \bb T_n} \big(\eta(x)-\eta(x+\tfrac{1}{n})\big) \left( h_t^j(x) + r_n^j(t,x) \right),
\end{equation*}
with $r_n^j(t,x)$ a correction of order $1/n$.
Therefore,
\begin{equation*}
\< \pi^n(\nabla h^j_t),g^2\> = \sum_{x \in \bb T_n} \left( h_t^j(x) + r_n^j(t,x) \right) \<\eta(x)-\eta(x+\tfrac{1}{n}),g^2\>.
\end{equation*}
We will use the following trick: for any $x,y \in \bb T^n$ and any $g: \Omega_n \to \bb R$ such that $\<g,g\>=1$,
\begin{equation*}
\begin{split}
\<\eta(x) - \eta(y), g^2\> &= \<\eta(x), g^2(\eta)-g^2(\eta^{x,y})\>\\
		&=\big\<\eta(x), \big(g(\eta) -g(\eta^{x,y})\big)\big(g(\eta)+g(\eta^{x,y})\big)\big\>\\
		&\leq \frac{1}{2\beta_{x,y}} \mc D^{x,y}(g) + \frac{\beta_{x,y}}{2} \int \big(g(\eta) + g(\eta^{x,y})\big)^2 \nu_\rho(d\eta)\\
		&\leq \frac{1}{2\beta_{x,y}} \mc D^{x,y}(g) + 2\beta_{x,y}
\end{split}
\end{equation*}
for any $\beta_{x,y} >0$. In the first inequality we used the weighted Cauchy-Schwarz inequality and the inequality between arithmetic and geometric mean. In the second inequality we used the fact that $\<g,g\>=1$. Choosing $\beta_{\smash{x,x+\frac{1}{n}}} = \frac{\gamma |h_t^{\smash{j}}(x) + r_n^j(t,x)|}{2n}$, we obtain the bound
\begin{equation*}
\gamma \<\pi^n(\nabla h^j_t), g^2\> - n \mc D(g) \leq \frac{\gamma^2}{n} \sum_{x \in \bb T_n} \left( h_t^j(x) + r_n^j(t,x) \right)^2,
\end{equation*}
valid for any $g: \Omega_n \to \bb R$ with $\<g,g\>=1$. We conclude that
\begin{equation*}
\frac{1}{n} \log \bb P_n\big( \linn \pi^n_\cdot,h^j\rinn_{0,T}>M\big) \leq -\gamma M+K_0+K_1T+ \gamma^2 \! \int_0^T \!\frac{1}{n}\! \sum_{x \in \bb T_n} \!\!\!\!\left( h_t^j(x) + r_n^j(t,x) \right)^2 dt.
\end{equation*}
Therefore, sending $n$ to $\infty$ we see that for $C_0=K_0+K_1T$,
\begin{equation*}
\varlimsup_{n \to \infty} \frac{1}{n} \log \bb P_n \big( \linn \pi^n_\cdot\!\!,\nabla h^j \rinn_{0,T} >M\big) \leq -\gamma M +C_0 +\gamma^2.
\end{equation*}
Minimizing over $\gamma$ concludes the proof.
\end{proof}

\section{The upper bound}
\label{s6} 
Now that we have the superexponential estimate and the energy estimate at our disposal, we can show the large deviation upper bound on Theorem \ref{t2}. As we have done before, for the sake of  clarity, we break the proof into various steps.

\subsection{The upper bound for open sets}

Let us recall that we want to obtain a large deviation principle for the pair \newline $\{(\pi_t^n; x_t^n); t\in [0,T]\}$, viewed as a random variable with values in the Polish space 
$\mc E=\mc D([0,T]; \mc M_{0,1}^+(\bb T)) \times \mc M^+([0,T])\times \mc M^+([0,T])$. Recall that we are identifying the process $\{x_t^n; t \in [0,T]\}$ with the pair of positive Radon measures $(\omega_-^n, \omega_+^n)$, corresponding to the derivatives of the processes $\frac{1}{n} N_t^{n,-}$, $\frac{1}{n} N_t^{n,+}$.
\newline The space $\mc D([0,T]; \mc M_{0,1}^+(\bb T))$ is equipped with the $J_1$-Skorohod topology, while $\mc M^+([0,T])$ is equipped with the weak topology. 

Notation will become cumbersome very quickly, unless we adopt some simplifying conventions. We will denote the process $\{(\pi_t^n; x_t^n); t \in [0,T]\}$ by $(\pi^n\!\!,x^n)$. In particular, we abandon the notation introduced in Section \ref{s4}, where we used the notation $\pi^n_\cdot$ (with a dot) for $\{\pi_t^n; t \in [0,T]\}$.
Let $\mc A \subseteq \mc E$ be an open set and $\{\mc M_t^n; t \in [0,T]\}$ be a positive martingale with unit expectation. Assume that $\mc M_T^n$ is a function of $(\pi^n\!\!,x^n)$. Then,
\begin{equation*}
\begin{split}
\frac{1}{n} \log \bb P_n(\mc A) 
		&= \frac{1}{n} \log \bb E_n\big[ \mc M_T^n \big(\mc M_T^n\big)^{-1} \mathbf{1}_{\mc A}\big]\\
		&\leq \sup_{(\pi^n\!\!,x^n)\in \mc A}\frac{1}{n} \log \left( \mc M_T^n \right)^{-1}.\\
\end{split}
\end{equation*}
The martingales $\{\mc M_t^{a,n}; t \in [0,T]\}$, $\{\mc M_t^{H,n}; t \in [0,T]\}$ are not functions of $(\pi^n\!\!,x^n)$ but the superexponential estimates of Lemma \ref{p8} and Lemma \ref{l4} say that these martingales can be approximated by functions of $(\pi^n\!\!,x^n)$, with an error that is superexponentially small.
To keep track of all the indices and ease the reading, we now introduce some notation. Let us denote by $I$ the set of indices $i$ of the form $i=\{v_0,a,H,\epsilon,\delta,\ell,M\}$, where $v_0:\bb T \to [0,1]$ is continuous, $a:[0,T] \to \bb R$ is of class $\mc C^1$, $H:[0,T]\times \bb T \to \bb R$ is of class $\mc C^{1,2}$, $\epsilon>0$, $\delta>0$, and $\ell, M \in \bb N$. In what follows, we use the index $i$ to denote dependence on some (sometimes all, but not always) of the variables $\{v_0,a,H, \epsilon,\delta, \ell, M\}$. We start by preparing an initial distribution associated to a profile $v_0$. For $v_0: \bb T \to [0,1]$ continuous, define
\begin{equation*}
f(x) = \log\frac{v_0(x) \big(1-u_0(x)\big)}{u_0(x)\big(1-v_0(x)\big)}.
\end{equation*}
Recall the definition of $\{\rho_x^n; x \in \bb T_n\}$ given in Section \ref{s1.6}. Define the functions  $f_x^n = n \int \delta_x^n(y) f(y) dy$ and 
\begin{equation*}
v_x^n = \frac{\rho_x^n e^{f_x^n}}{1+\rho_x^n(e^{f_x^n}-1)}.
\end{equation*}
Define $\hat{\nu}_{v_0}^n$ as the product measure in $\Omega_n$ given by
\begin{equation*}
\hat{\nu}_{v_0}^n(\eta) = \prod_{x \in \bb T_n} \big\{ v_x^n \eta(x) + (1-v_x^n) (1-\eta(x))\big\}.
\end{equation*}
Notice that with this definition, the Radon-Nikodym derivative $\frac{d\hat{\nu}_{v_0}^n}{d\nu_{u_0}^n}$ is a function of the empirical density $\pi_0^n$. The process $\{(\eta_t^n; x_t^n); t \in [0,T]\}$ with initial distribution $\hat{\nu}_{v_0}^n$ has distribution $\frac{d\hat{\nu}_{v_0}^n}{d\nu_{u_0}^n} \bb P_n$.
Recall \eqref{Ma} and \eqref{MH}, and consider the martingale $\{\mc M_t^{i,n}; t \in [0,T]\}$ given by
\begin{equation}\label{eq:Mi}
\mc M_t^{i,n} = \frac{d\hat{\nu}_{v_0}^n}{d\nu_{u_0}^n} \mc M_t^{a,n} \mc M_t^{H,n}.
\end{equation}
Let $\mc U_i^n= \mc U_{\epsilon,\delta}^{H,n} \cap \mc U_{\epsilon, \delta}^{a,n} \cap \mc U_{M,\ell}^n$ denote the intersection of the sets
\begin{equation}\label{eq:U^H}
\mc U_{\epsilon, \delta}^{H,n} = 
\Big\{\Big|\mc Q_T^n(H) - \int_0^T \frac{1}{n} \sum_{x \in \bb T_n}  \big(\nabla H_t(x)\big)^2 \pi_t^n(\iota_\epsilon(x))\big(1-\pi_t^n(\iota_\epsilon(x))\big) dt \Big| \leq \delta\Big\},
\end{equation}
\begin{equation}\label{eq:U^a}
\mc U_{\epsilon,\delta}^{a,n}=
\Big\{\Big| \int_0^t \big\{c^{\pm}(\eta_s^n; x_s^n)-v^{\pm}(\pi_s^n(\iota_\epsilon(x_s^n)))\big\}\big(e^{\pm a(s)}-1\big)ds\Big|\leq \delta \Big\},
\end{equation}
\begin{equation*}
\mc U_{M,\ell}^n = \Big\{ \nop_{1\leq j \leq \ell} \linn \pi^n,h^j\rinn_{0,T} \leq M \Big\}
\end{equation*}
Lemma \ref{l4} and Lemma \ref{l5} imply that
\begin{equation*}
\varlimsup_{\epsilon \to 0} \varlimsup_{\vphantom{0}n \to \infty} \frac{1}{n} \log \bb P_n \big(\big(\mc U_{\epsilon,\delta}^{H,n}\big)^{c}\big) = -\infty,
\end{equation*}
\begin{equation*}
\varlimsup_{\epsilon \to 0} \varlimsup_{\vphantom{0} n \to \infty} \frac{1}{n} \log \bb P_n \big(\big(\mc U_{\epsilon,\delta}^{a,n}\big)^{c}\big) = -\infty,
\end{equation*}
\begin{equation*}
\varlimsup_{n \to \infty} \frac{1}{n} \log \bb P_n \big(\big(\mc U_{M,\ell}^{n}\big)^{c}\big) \leq C_0-\tfrac{M}{2}.
\end{equation*}
Therefore,
\begin{equation*}
\varlimsup_{n \to \infty} \frac{1}{n} \log \bb P_n \big(\big(\mc U_i^{n}\big)^{c}\big) \leq \max\big\{U_{\epsilon,\delta}^{a,H}, C_0-\tfrac{M}{2}\big\},
\end{equation*}
where $U_{\epsilon, \delta}^{a,H}$ is a constant which converges to $-\infty$ as $\epsilon \to 0$, regardless of the values of $\delta, a$ or $H$. Therefore
\begin{equation*}
\begin{split}
\varlimsup_{n \to \infty} \frac{1}{n} \log \bb P_n((\pi^n\!\!,x^n) 
		&\in \mc A) \leq\\
		&\leq \varlimsup_{n \to \infty} \frac{1}{n} \log 2\max\Big\{  \bb P_n(\{(\pi^n\!\!,x^n) \in \mc A\} \cap \mc U_i^n), \bb P_n\big(\big(\mc U_i^n\big)^{c}\big)\Big\}\\
		& \leq \max\Big\{ \varlimsup_{n \to \infty} \frac{1}{n}\log \bb P_n(\{(\pi^n\!\!,x^n) \in \mc A\} \cap \mc U_i^n), U_{\epsilon,\delta}^{a,H}, C_0-\tfrac{M}{2}\Big\}.\\
\end{split} 
\end{equation*}
On the set $\mc U_i^n$, the martingale $\mc M_T^{i,n}$ is a function of the pair $(\pi^n\!\!,x^n)$, plus some small error term. Consequenlty, we can bound
\begin{equation*}
\begin{split}
\frac{1}{n} \log \bb P_n(\{(\pi^n\!\!,x^n) \in \mc A \} \cap \mc U_i^n) \leq \sup_{(\pi\!,x) \in\mc A \cap \mc U_M^\ell} \big\{ - 
		&\big( j_\epsilon(a;\pi,x) + J_\epsilon^n(H; \pi)+\\
		&+h_n(v_0,u_0;\pi_0) \big)+r_n(H) +2\delta\big\},
\end{split}
\end{equation*}
where $\mc U_M^\ell = \{\nop_{1\leq j\leq \ell} \linn \pi,\nabla h^j\rinn_{0,T} \leq M\}$ and the functions $j_\epsilon$, $J_\epsilon^n$ and $h_n$ are given by
\begin{equation}\label{je}
j_\epsilon(a; \pi,x) = a(T) x_T - \int_0^T\Big\{ a'(t)x_t + \sum_{z=\pm} v^z(\pi_t(\iota_\epsilon(x_t)))(e^{za(t)}-1)ds\Big\},
\end{equation}
\begin{equation*}
\begin{split}
 J_\epsilon^n(H;\pi) = \pi_T( H_T) 
 		&- \pi_0( H_0) - \int_0^T \pi_t \big(\partial_t H_t + 2\Delta H_t\big) d \pi_t dt\\
		&\quad -\int_0^T \frac{1}{n} \sum_{x \in \bb T_n} \big( \nabla H_t(x)\big)^2 \pi_t(\iota_\epsilon(x))\big(1-\pi_t(\iota_\epsilon(x))\big)dt,
 \end{split}
\end{equation*}
\begin{equation*}
h_n(v_0,u_0; \pi_0) = \int \log \frac{v_0(x)(1-u_0(x))}{u_0(x)(1-v_0(x))} d \pi_0 + \frac{1}{n} \sum_{x \in \bb T_n} \log\frac{1-v_x^n}{1-\rho_x^n}.
\end{equation*}
Recall that the error term $r_n(H)$ comes from replacing a discrete version of the Laplacian of $H$ by $\Delta H$. The error term $2 \delta$ comes from the use of the superexponential estimates stated in Lemma \ref{p8} and Lemma \ref{l4}. 
Using the smoothness of $\nabla H_t$ and of $v_0$, we see that $h_n$ and $J_\epsilon^n$ converge to the functions
\begin{equation}\label{Je}
\begin{split}
 J_\epsilon(H;\pi) = \pi_T( H_T) 
 		&- \pi_0( H_0) - \int_0^T \pi_t \big(\partial_t H_t + 2\Delta H_t\big) d \pi_t dt\\
		&\quad -\int_0^T \int  \big( \nabla H_t(x)\big)^2 \pi_t(\iota_\epsilon(x))\big(1-\pi_t(\iota_\epsilon(x))\big)dx dt,
\end{split}
\end{equation}
\begin{equation*}
h(v_0,u_0; \pi_0) = \int \log \frac{v_0(x)(1-u_0(x))}{u_0(x)(1-v_0(x))} d \pi_0 + \int \log\frac{1-v_0(x)}{1-u_0(x)}dx.
\end{equation*}
Let us define
\begin{equation}\label{Ji}
\mc J_i(\pi,x) =
\begin{cases}
j_\epsilon(a; \pi,x) + J_\epsilon(H;\pi) + h(v_0,u_0; \pi_0) - 2\delta, &\text{if } (\pi,x) \in \mc U_M^\ell\\
+\infty, &\text{otherwise}.
\end{cases}
\end{equation}
The function $\mc J_{i}(\pi,x)$ is lower semicontinuous, since each one of the functions $j_\epsilon$, $\mc J_\epsilon$ and $h$ are continuous, and the set $\mc U_M^\ell$ is closed. Minimizing over all the indices $i$, we finally obtain the upper bound for open sets:
\begin{equation}\label{Aub}
\varlimsup_{n \to \infty} \frac{1}{n} \log \bb P_n((\pi^n\!\!,x_n) \in \mc A) \leq \inf_{i \in I\vphantom{\mc A}} \nop_{(\pi,x) \in \mc A} \max\{-\mc J_i(\pi,x), U_{\epsilon,\delta}^{a,H}, C_0-\tfrac{M}{2}\}.
\end{equation}

\subsection{The upper bound for compact sets}

Once a large deviation upper bound has been obtained for open sets, the standard way to pass from it to an upper bound for compact sets is through the so-called {\em Minimax lemma}, whose proof can be found in \cite{KL}, Lemma 3.2 in Appendix 2.

\begin{proposition}[Minimax Lemma] 
Let $\{\mc F_i; i \in I\}$ be a family of upper semicontinuous functions defined on a Polish space $\mc E$. Let $\{P_n; n \in \bb N\}$ be a sequence of probability measures in $\mc E$. Assume that for any open set $\mc A \subseteq \mc E$,
\begin{equation*}
\varlimsup_{n \to \infty} \frac{1}{n} \log P_n (\mc A) \leq \inf_{i \in I\vphantom{\mc A}} \nop_{x \in \mc A} \mc F_i(x).
\end{equation*}
Then, for any compact set $\mc K \subseteq \mc E$,
\begin{equation*}
\varlimsup_{n \to \infty} \frac{1}{n} \log P_n( \mc K) \leq \nop_{x \in \mc K} \inf_{i \in I\vphantom{\mc K}} \mc F_i(x).
\end{equation*}
\end{proposition}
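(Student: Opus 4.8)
The plan is the classical finite-covering argument; the only substantive ingredient is the upper semicontinuity of the functions $\mc F_i$. Fix a compact set $\mc K \subseteq \mc E$ and any real $\lambda$ with $\lambda > \nop_{x \in \mc K} \inf_{i \in I} \mc F_i(x)$; it suffices to prove $\varlimsup_{n \to \infty} \frac1n \log P_n(\mc K) \leq \lambda$, since letting $\lambda$ decrease to the right-hand side then yields the claim (the cases $\mc K = \emptyset$ or $\nop_{x \in \mc K}\inf_i \mc F_i(x) = +\infty$ being trivial, and $\lambda$ ranging over all of $\bb R$ when that supremum is $-\infty$). For each $x \in \mc K$ one has $\inf_i \mc F_i(x) < \lambda$, so there is $i_x \in I$ with $\mc F_{i_x}(x) < \lambda$. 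Here upper semicontinuity enters: the set $\{y \in \mc E; \mc F_{i_x}(y) < \lambda\}$ is open and contains $x$, hence contains an open neighbourhood $\mc A_x$ of $x$ on which $\nop_{y \in \mc A_x} \mc F_{i_x}(y) \leq \lambda$.

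Next I would use compactness. The family $\{\mc A_x; x \in \mc K\}$ covers $\mc K$, so there are $x_1, \dots, x_k \in \mc K$ with $\mc K \subseteq \bigcup_{j=1}^k \mc A_{x_j}$. By subadditivity $P_n(\mc K) \leq \sum_{j=1}^k P_n(\mc A_{x_j}) \leq k \max_{1 \leq j \leq k} P_n(\mc A_{x_j})$, so by Lemma \ref{p10} (with the convention $\log 0 = -\infty$)
\begin{equation*}
\frac1n \log P_n(\mc K) \leq \frac{\log k}{n} + \max_{1 \leq j \leq k} \frac1n \log P_n(\mc A_{x_j}).
\end{equation*}
Taking $\varlimsup_{n \to \infty}$ — which commutes with the maximum over the fixed finite set — and invoking the hypothesis for each open set $\mc A_{x_j}$ gives
\begin{equation*}
\varlimsup_{n \to \infty} \frac1n \log P_n(\mc K) \leq \max_{1 \leq j \leq k} \inf_{i \in I} \nop_{y \in \mc A_{x_j}} \mc F_i(y) \leq \max_{1 \leq j \leq k} \nop_{y \in \mc A_{x_j}} \mc F_{i_{x_j}}(y) \leq \lambda,
\end{equation*}
where the second inequality drops the infimum in favour of the specific index $i_{x_j}$ and the third uses the defining property of $\mc A_{x_j}$. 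This is the desired bound.

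The argument is entirely soft, so I do not anticipate a genuine obstacle; the step needing the most care is simply the bookkeeping around the degenerate values $\pm\infty$ of the nested suprema and infima, together with the remark that a possibly uncountable index set $I$ is harmless because the finite subcover singles out only the finitely many indices $i_{x_1}, \dots, i_{x_k}$. The only hypotheses actually used are the upper semicontinuity of each $\mc F_i$ (making the sublevel sets $\{\mc F_i < \lambda\}$ open) and the compactness of $\mc K$ (providing finite subcovers).
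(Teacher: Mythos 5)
Your proof is correct. Note that the paper itself does not prove this proposition — it defers to \cite{KL}, Lemma 3.2 in Appendix 2 — and your covering argument (upper semicontinuity makes the sublevel sets $\{\mc F_{i_x} < \lambda\}$ open, compactness extracts a finite subcover, subadditivity plus Lemma \ref{p10} and the interchange of $\varlimsup$ with a finite maximum finish the bound) is precisely the standard proof given in that reference, including the correct handling of the degenerate values $\pm\infty$.
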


Let $\mc K \subseteq \mc E$ be a compact set.
Applying the Minimax Lemma to the family of functions $\max\{-\mc J_i(\pi,x),U_{\epsilon,\delta}^{a,H}, C_0-\frac{M}{2}\}$ in \eqref{Aub}, we obtain the bound
\begin{equation}\label{Kub}
\varlimsup_{n \to \infty} \frac{1}{n} \log \bb P_n ( (\pi^n\!\!,x^n) \in \mc K) 
		\leq \nop_{(\pi,x) \in \mc K} \inf_{i \in I \vphantom{\mc K}}\;\; \max\;\{-\mc J_i(\pi,x), U_{\epsilon,\delta}^{a,H}, C_0-\tfrac{M}{2}\}.
\end{equation}
Recall that the index $i$ includes all the possible choices of $v_0$, $a$, $H$, $\epsilon$, $\delta$, $\ell$ and $M$. We will take advantage of this by taking the infima in the right order. Observe that we can replace $\inf$ by $\liminf$ whenever it is convenient, since the $\liminf$ of a sequence is greater than the $\inf$ of the same sequence. 
Recall the definition $\mc U_M^\ell = \{\nop_{1\leq j\leq \ell} \linn \pi,\nabla h^j\rinn_{0,T} \leq M\}$. Now we send $\ell \to \infty$. Notice that $\mc J_i(\pi,x)$ is increasing in $\ell$: the set where we define $\mc J_i(\pi,x)$ as equal to $+\infty$ is growing with $\ell$, and outside of it, the function $\mc J_i(\pi,x)$ does not depend on $\ell$. This is equivalent to saying that we are restricting the supremum to the intersection of $\mc K$ and $\mc U_M= \cap_\ell \mc U_M^\ell$. 
By the definition of the sequence $\{h^j; j \in \bb N\}$, the set $\mc U_M$ is equal to the set $\{\|\pi\|_{1,T}\leq M\}$. Now it is the turn of sending $M \to \infty$. Doing this, there are two effects. First, the term $C_0-\frac{M}{2}$ goes to $-\infty$, and we can take it out of the maximum. And second, the set $\mc U = \cup_M \mc U_M$ is equal to the set $\mc H_{1,T} = \{\|\pi\|_{1,T} <+\infty\}$. Therefore, after taking the limit in $\ell$ first and then in $M$, in view of \eqref{Kub}, we end up with the inequality:
\begin{equation}\label{KHub}
\varlimsup_{n \to \infty} \frac{1}{n} \log \bb P_n((\pi^n\!\!,x^n) \in \mc K) \leq \sup_{ \mc K \cap \mc H_{1,T}}\!\! \inf_{\vphantom{{\mc H}^{H}} i} \;\max\{-\mc J_i(\pi,x) , U_{\epsilon,\delta}^{a,H}\}.
\end{equation}
Notice that these two limit procedures together with Section \ref{s4} were devoted to maximize over the set $\mc K \cap \mc H_{1,T}$ instead of $\mc K$. The reason for this will become transparent in what follows. We now move to minimize the r.h.s. of \eqref{KHub} over $\epsilon$. Recall that $U_{\epsilon,\delta}^{a,H}$ goes to $-\infty$ as $\epsilon \to 0$ if the other parameters are fixed. But then we need to analyze the limit of $\mc J_i(\pi,x)$ in \eqref{Ji} when $\epsilon\to 0$, The analysis of the term $J_\epsilon(H; \pi)$ in \eqref{Je} has been already done in \cite{KOV}  and in Chapter 10 of \cite{KL}. Therefore, we just need to look at $j_\epsilon(a;\pi,x)$ in \eqref{je}.
When $\epsilon \to 0$, we cannot guarantee that $\pi_t(\iota_\epsilon(x_t))$ in \eqref{je} goes to $\pi_t(x_t)$ if we only know that $\pi_t$ has bounded density: it may easily be the case that $x_t$ is a non-removable-by-smoothing discontinuity for every $t \in [0,T]$. The set of points of this type forms a very thin subset of $\bb T$, but we cannot rule out  a pathological behavior supposing only that $\pi_t \in \mc M_{0,1}^+(\bb T)$. Since we can assume that $\pi \in \mc H_{1,T}$, we can also assume that $x \mapsto \pi_t(x)$ is continuous for a.e.~$t \in [0,T]$. Then, $\pi_t(\iota_\epsilon(x_t))$ converges to $\pi_t(x_t)$ for a.e.~$t \in [0,T]$. By the dominated convergence theorem, we conclude that $j_\epsilon(a;\pi,x)$ converges, as $\epsilon \to 0$, to
\begin{equation}\label{RateFn4}
j(a;\pi,x) = a(T) x_T - \int_0^T \Big\{a'(t) x_t + \sum_{z=\pm} v^z(\pi_t(x_t))(e^{za(t)}-1) \Big\}dt
\end{equation}
As shown in \cite{KOV}, as $\epsilon \to 0$, the function $J_\epsilon(H;\pi)$ also has a well-defined limit, and the fact that $\pi_t \in \mc M_{0,1}^+(\bb T)$ is enough to justify the limit. This limit is equal to
\begin{equation*}
\begin{split}
\lim_{\epsilon \to 0} J_\epsilon(H;\pi)=J(H;\pi) := \pi_T( H_T) 
 		&- \pi_0( H_0) - \int_0^T \pi_t \big(\partial_t H_t + 2\Delta H_t\big) d \pi_t dt\\
		&\quad -\int_0^T \int  \big( \nabla H_t(x)\big)^2 \pi_t(x)\big(1-\pi_t(x)\big)dx dt,
\end{split}
\end{equation*}
Finally, by taking $\epsilon \to 0$ in the r.h.s. of \eqref{KHub}, we get the bound
\begin{equation*}
\begin{split}
\varlimsup_{n \to \infty} \frac{1}{n} \log \bb P_n((\pi^n\!\!,x^n) \in \mc K)
		&\leq \sup_{ \mc K \cap \mc H_{1,T}} \inf_{\vphantom{{\mc H}^{h}} v_0,a,H}\; \{-(j(a;\pi,x) +J(H;\pi)+h(v_0,u_0;\pi_0))\}\\
		&= -\inf_{ \mc K \cap \mc H_{1,T}} \nop_{\vphantom{{\mc H}} v_0,a,H}\; \big\{j(a;\pi,x) +J(H;\pi)+h(v_0,u_0;\pi_0)\big\}.\\
\end{split}
\end{equation*}
It turns out that the last supremum is exactly the rate function of the large deviation principle stated in Theorem \ref{t2} (see equations (1.1)-(1.4) in Chapter 10 of \cite{KL} for the equivalence), and therefore we have completed the large deviation upper bound of Theorem \ref{t2} for compact sets. We state this bound as a lemma for further reference.

\begin{lemma}
\label{l6}
For any compact set $\mc K \subseteq \mc E$, 
\begin{equation*}
\varlimsup_{n \to \infty} \frac{1}{n} \log \bb P_n((\pi^n\!\!,x^n) \in \mc K) \leq -\!\!\!\!\inf_{(\pi,x) \in \mc K} \!\!\{\mc I_{\mathrm{rw}}(x|\pi)+ \mc I_{\mathrm{ex}}(\pi)\big\}
\end{equation*}
\end{lemma}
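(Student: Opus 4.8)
The statement packages the work of Sections~\ref{s4}--\ref{s6} into a single inequality, so the plan is to take the open-set estimate \eqref{Aub} as input, pass to compact sets by the Minimax Lemma, then send the auxiliary parameters $\ell,M,\epsilon$ and the test objects $v_0,a,H$ to their limits in a carefully chosen order, recognising the resulting variational expression at the end as $\mc I_{\mathrm{rw}}(x|\pi)+\mc I_{\mathrm{ex}}(\pi)$.

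First I would observe that, for each fixed $i\in I$, the map $(\pi,x)\mapsto\max\{-\mc J_i(\pi,x),\,U_{\epsilon,\delta}^{a,H},\,C_0-\tfrac M2\}$ is upper semicontinuous, because $\mc J_i$ is lower semicontinuous (a sum of continuous functionals on the closed set $\mc U_M^\ell$, equal to $+\infty$ off it) and the two constants are trivially u.s.c. Hence the Minimax Lemma turns \eqref{Aub} into \eqref{Kub} for any compact $\mc K\subseteq\mc E$. Next I would push the infimum over $i$ through the supremum over $\mc K$ in three stages. Sending $\ell\to\infty$ makes $\mc J_i$ increase (its $+\infty$-region grows, elsewhere it is $\ell$-free), which restricts the supremum to $\mc K\cap\mc U_M$, and the density of $\{h^j\}$ in the unit ball of $\mc H_{0,T}$ gives $\mc U_M=\{\|\pi\|_{1,T}\le M\}$. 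Sending $M\to\infty$ removes $C_0-\tfrac M2$ from the maximum and, by the energy estimate (Lemma~\ref{l5}), restricts the supremum to $\mc K\cap\mc H_{1,T}$; this is \eqref{KHub}, and it is exactly here that working on finite-energy paths — hence, by Proposition~\ref{prop:H}, on paths whose spatial density $x\mapsto\pi_t(x)$ is continuous for a.e.\ $t$ — becomes legitimate.

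The main work, and the step I expect to be the obstacle, is the limit $\epsilon\to0$ in \eqref{KHub}. Dropping $U_{\epsilon,\delta}^{a,H}$ is free since it tends to $-\infty$; the content is the convergence of $j_\epsilon(a;\pi,x)$ and $J_\epsilon(H;\pi)$. For $J_\epsilon\to J$ I would cite \cite{KOV} and Chapter~10 of \cite{KL}, where only $\pi_t\in\mc M_{0,1}^+(\bb T)$ is needed. For $j_\epsilon\to j$, the only $\epsilon$-dependent term of \eqref{je} is $\sum_{z=\pm}v^z(\pi_t(\iota_\epsilon(x_t)))(e^{za(t)}-1)$; since the density is continuous at $x_t$ for a.e.\ $t$ on $\mc H_{1,T}$, we get $\pi_t(\iota_\epsilon(x_t))\to\pi_t(x_t)$ for a.e.\ $t$, and as $v^\pm$ are bounded and $a$ is continuous on $[0,T]$ this term is bounded uniformly in $(\epsilon,t)$, so dominated convergence yields $j_\epsilon\to j$ with $j$ as in \eqref{RateFn4}. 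Taking this limit in \eqref{KHub} leaves the bound with $-\sup_{v_0,a,H}\{j(a;\pi,x)+J(H;\pi)+h(v_0,u_0;\pi_0)\}$ inside the supremum over $\mc K\cap\mc H_{1,T}$.

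Finally I would identify this supremum with $\mc I_{\mathrm{rw}}(x|\pi)+\mc I_{\mathrm{ex}}(\pi)$. Since $j$, $J$, $h$ involve disjoint blocks of test objects, it splits as $\sup_a j(a;\pi,x)+\sup_H J(H;\pi)+\sup_{v_0}h(v_0,u_0;\pi_0)$. Optimizing the last term over $v_0$ recovers $h(\pi_0|u_0)$ of \eqref{RateFn1}, so that $\sup_{v_0}h+\sup_{H\in\mc C^{1,2}}J$ is precisely $\mc I_{\mathrm{ex}}(\pi)$ — this variational form of $\mc I_{\mathrm{ex}}$ agrees with the one in \eqref{RateFn2} by (1.1)--(1.4) of Chapter~10 of \cite{KL}. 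For $\sup_{a\in\mc C^1}j(a;\pi,x)$, the functional \eqref{RateFn4} decouples over $t$ into maximizing $\lambda\mapsto\lambda x'_t-\sum_{z=\pm}v^z(\pi_t(x_t))(e^{z\lambda}-1)$ over $\lambda\in\bb R$; when $v^+(\pi_t(x_t))v^-(\pi_t(x_t))>0$ the stationarity condition $x'_t=v^+e^\lambda-v^-e^{-\lambda}$ is quadratic in $e^\lambda$ with root $a_{x,\pi}(t)$ of \eqref{RateFn3}, the cases $v^+v^-=0$ give the remaining branches, and finiteness of the optimal $a$ forces exactly the three integrability conditions stated after \eqref{RateFn3} (in their absence the optimum is $+\infty$, consistent with the convention $\mc I_{\mathrm{rw}}(x|\pi)=\infty$ there). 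Substituting the optimal $a$ gives $\sup_a j(a;\pi,x)=\mc I_{\mathrm{rw}}(x|\pi)$, and adding the pieces yields the inequality of the lemma.
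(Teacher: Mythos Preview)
Your proposal is correct and follows essentially the same route as the paper: apply the Minimax Lemma to \eqref{Aub}, send $\ell\to\infty$ then $M\to\infty$ to restrict to $\mc K\cap\mc H_{1,T}$ and kill $C_0-\tfrac M2$, then send $\epsilon\to0$ using continuity of $x\mapsto\pi_t(x)$ a.e.\ (from finite energy) together with dominated convergence for $j_\epsilon\to j$ and the \cite{KOV}/\cite{KL} argument for $J_\epsilon\to J$, and finally identify the variational expression with the rate function. Your treatment is in fact slightly more explicit than the paper's in the final identification step (the paper simply refers to (1.1)--(1.4) of Chapter~10 in \cite{KL}, whereas you spell out the splitting $\sup_a j+\sup_H J+\sup_{v_0}h$ and the pointwise optimization over $a$, which the paper defers to Section~\ref{RateFn}); one small inaccuracy is that the restriction to $\mc H_{1,T}$ after $M\to\infty$ comes purely from the structure of $\mc J_i$, not from a fresh invocation of Lemma~\ref{l5}, whose role was already absorbed into the constant $C_0-\tfrac M2$ at the open-set stage.
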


\subsection{Upper bound for closed sets}
The canonical way to extend a large deviation upper bound from compact sets to closed sets is to proving the {\em exponential tightness} of the corresponding sequence of processes. We say that the sequence $(\pi^n,x^n)$ is exponentially tight if for any $M >0$ there exists a compact $\mc K_M \subseteq \mc E$ such that 
\begin{equation*}
\varlimsup_{n \to \infty} \frac{1}{n} \log \bb P_n((\pi^n\!\!,x^n) \in \mc K_M^c) \leq -M.
\end{equation*}
The relevance of this condition is given by the following proposition, whose proof is left to the reader.

\begin{proposition}\label{expotight}
Let $\{P_n; n \in \bb N\}$ a sequence of probability measures defined on a Polish space $\mc E$. Let $\mc I: \mc E \to [0,\infty]$ be a lower semicontinuous function. Assume that for any compact set $\mc K \subseteq \mc E$,
\begin{equation*}
\varlimsup_{n \to \infty} \frac{1}{n} \log P_n(\mc K) \leq - \inf_{x \in \mc K} \mc I(x).
\end{equation*}
Assume in addition that the sequence $\{P_n; n \in \bb N\}$ is exponentially tight. Then,
\begin{equation*}
\varlimsup_{n \to \infty} \frac{1}{n} \log P_n(\mc C) \leq - \inf_{x \in \mc C} \mc I(x).
\end{equation*}
for any closed set $\mc C \subseteq \mc E$.
\end{proposition}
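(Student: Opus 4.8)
The plan is to run the standard compact-exhaustion argument. Fix a closed set $\mc C \subseteq \mc E$ and a number $M>0$. By exponential tightness there is a compact set $\mc K_M \subseteq \mc E$ with
\begin{equation*}
\varlimsup_{n \to \infty} \frac{1}{n} \log P_n(\mc K_M^c) \leq -M.
\end{equation*}
Since $\mc C$ is closed and $\mc K_M$ is compact, the set $\mc C \cap \mc K_M$ is compact, so the hypothesis applies to it. First I would split the probability according to whether the configuration lies in $\mc K_M$ or not:
\begin{equation*}
P_n(\mc C) \leq P_n(\mc C \cap \mc K_M) + P_n(\mc K_M^c).
\end{equation*}

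Next, applying Lemma \ref{p10} with $\ell = 2$ to the right-hand side, one gets
\begin{equation*}
\frac{1}{n}\log P_n(\mc C) \leq \frac{\log 2}{n} + \max\Big\{ \frac{1}{n}\log P_n(\mc C \cap \mc K_M),\ \frac{1}{n}\log P_n(\mc K_M^c)\Big\}.
\end{equation*}
Taking $\varlimsup_{n \to \infty}$ and using the compact-set bound on $\mc C \cap \mc K_M$ together with the exponential tightness bound yields
\begin{equation*}
\varlimsup_{n \to \infty} \frac{1}{n}\log P_n(\mc C) \leq \max\Big\{ -\!\!\!\inf_{x \in \mc C \cap \mc K_M}\!\!\! \mc I(x),\ -M\Big\} \leq \max\Big\{ -\inf_{x \in \mc C} \mc I(x),\ -M\Big\},
\end{equation*}
where in the last step I used monotonicity of the infimum under set inclusion, $\inf_{\mc C \cap \mc K_M} \mc I \geq \inf_{\mc C} \mc I$ (with the usual convention that the infimum over the empty set is $+\infty$, which keeps the inequality valid when $\mc C \cap \mc K_M = \emptyset$). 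Finally, letting $M \to \infty$ removes the second term in the maximum and gives
\begin{equation*}
\varlimsup_{n \to \infty} \frac{1}{n}\log P_n(\mc C) \leq -\inf_{x \in \mc C} \mc I(x),
\end{equation*}
which is the claim.

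There is no real obstacle here: the only points requiring a word of care are that a closed subset of a compact set is compact (so the hypothesis genuinely applies to $\mc C \cap \mc K_M$), the elementary $\log$-of-a-sum estimate from Lemma \ref{p10}, and the empty-intersection convention; lower semicontinuity of $\mc I$ is not even needed for this direction, though it is of course needed elsewhere for $\mc I$ to qualify as a rate function. Accordingly, combining this proposition with Lemma \ref{l6} immediately upgrades the compact upper bound to the full upper bound for the pair $\{(\pi_t^n;x_t^n);\,t\in[0,T]\}$, once the exponential tightness of $(\pi^n\!\!,x^n)$ has been established.
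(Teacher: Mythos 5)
Your proof is correct and complete: the paper explicitly leaves this proposition to the reader, and your argument is exactly the standard one intended --- decompose $P_n(\mc C)\leq P_n(\mc C\cap\mc K_M)+P_n(\mc K_M^c)$, note that $\mc C\cap\mc K_M$ is compact as a closed subset of a compact set, apply Lemma \ref{p10}, and send $M\to\infty$. Your side remarks (the empty-set convention for the infimum, and the fact that lower semicontinuity of $\mc I$ is not actually used in this implication) are also accurate.
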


Due to the product structure of the state space of $(\pi^n\!\!,x^n)$, it is enough to show exponential tightness for each of the process $\{\pi^n; n \in \bb N\}$, $\{x^n; n \in \bb N\}$ separately. The exponential tightness of $\{\pi^n; n \in \bb N\}$ is proved in Chapter 10.4 of \cite{KL}, starting from eq.~(4.5). We are left to proving the exponential tightness of $\{x^n; n \in \bb N\}$. This is equivalent to showing the exponential tightness of each one of the processes $\{\omega_{\pm}^n; n \in \bb N\}$. Recall the following characterisation of compact sets of $\mc M^+([0,T])$. A closet set $\mc C \subseteq \mc M^+([0,T])$ is compact if and only if $
\sup_{\mu \in \mc C} \mu([0,T]) <+\infty.$
Notice as well that $\omega_\pm^n([0,T]) = \frac{1}{n} N_T^{\pm,n}$.
Therefore, in order to show exponential tightness of $\{\omega_{\pm}^n; n \in \bb N\}$, it is enough to show that
\begin{equation*}
\varlimsup_{M \to \infty} \varlimsup_{n \to \infty \vphantom{M}} \frac{1}{n} \log \bb P_n( N_T^{\pm,n} > nM) =-\infty.
\end{equation*}
This is actually simple to prove. In fact, the processes $\{M_t^{\pm,n}; t \in [0,T]\}$
\begin{equation*}
M_t^{\pm,n} = \exp\Big\{ \theta N_t^{\pm,n} -n\int_0^t c^\pm(\xi_s^n)(e^\theta-1) ds\Big\}
\end{equation*}
are positive martingales of unit expectation. In particular, taking $C_1 = \sup_\xi c^\pm(\xi)$,
\begin{equation*}
\bb E_n\big[e^{\theta N_t^{\pm,n}}\big] \leq e^{C_1 nt(e^\theta-1)}. 
\end{equation*}
Using the exponential Chebyshev's inequality, we see that
\begin{equation*}
\frac{1}{n} \log \bb P_n(N_T^{\pm,n} > nM) \leq C_1T(e^\theta-1)-\theta M,
\end{equation*}
which proves the exponential tightness of $\{x^n; n \in \bb N\}$. Therefore by Proposition \ref{expotight} and Lemma \ref{l6}, we conclude that
\begin{equation}\label{upperBound}
\varlimsup_{n \to \infty} \frac{1}{n} \log \bb P_n((\pi^n\!\!,x^n) \in \mc C) \leq -\!\!\inf_{(\pi,x) \in \mc C} \big\{\mc I_{\mathrm{rw}}(x|\pi) + \mc I_{\mathrm{ex}}(\pi)\big\},
\end{equation}
for any closed set $\mc C \subseteq \mc E$.

\subsection{Some properties of the rate function}\label{RateFn}
It turns out that a more explicit formula for the rate function $\mc I_{\mathrm{rw}}(x|\pi)$ can be obtained. Recall that we are assuming that $x$ has finite variation. We claim that $\mc I_{\mathrm{rw}}(x|\pi)=+\infty$ if $x$ is not absolutely continuous. Since $x$ has finite variation, we can justify an integration by parts to show that
\begin{equation*}
a(T) x_T-\int_0^T a'(t) x_t dt = \int_0^T a(t)dx_t.
\end{equation*}
Therefore,
\begin{equation*}
\mc I_{\mathrm{rw}}(x|\pi) = \nop_{a \in \mc C^1}\Big\{ \int_0^T a(t) dx_t - \int_0^T \sum_{z = \pm} v^z(\pi_t(x_t))(e^{za(t)}-1)dt\Big\}.
\end{equation*}
Let us assume that $x$ is not absolutely continuous. Then there exists a compact set $K \subseteq [0,T]$ such that $\int_0^T \mathbf{1}_K dx_t \neq 0$ and $\int_0^T \mathbf{1}_K dt =0$. For simplicity, we assume that $x(K)=:\int_0^T \mathbf{1}_K dx_t > 0$. Since $K$ is compact, there exists a sequence of smooth functions $a_\epsilon: [0,T] \to [0,1]$ such that $a_\epsilon \downarrow \mathbf{1}_K$ as $\epsilon \to 0$. Then, by the dominated convergence theorem,
\begin{equation*}
\lim_{\epsilon \to 0} \int_0^T \lambda a_\epsilon(t) d x_t = \lambda x(K),
\end{equation*}
\begin{equation*}
\lim_{\epsilon \to 0} \int_0^T \sum_{z=\pm} v^z(\pi_t(x_t))(e^{z\lambda a_\epsilon(t)}-1) dt =0.
\end{equation*}
Sending $\lambda \to \infty$, we conclude that $\mc I_{\mathrm{rw}}(x|\pi)=+\infty$. In particular, we can rewrite the rate function $\mc I_{\mathrm{rw}}$ as
\begin{equation}\label{eq:rw-rate}
\mc I_{\mathrm{rw}}(x|\pi) = \nop_{a \in \mc C^1} \int_0^T \Big\{ a(t) x'_t -\sum_{z=\pm} v^z(\pi_t(x_t))(e^{za(t)}-1)\Big\} dt.
\end{equation}
By an approximation argument, we can check that the supremum over $\mc C^1$ functions can be replaced by a supremum over bounded functions. An upper bound for $\mc I_{\mathrm{rw}}$ can be obtained by exchanging the supremum and the integration. The maximizing function $a$ is sensitive to $v^+(\pi_t(x_t))v^-(\pi_t(x_t))=0$. If $v^+(\pi_t(x_t))v^-(\pi_t(x_t))>0$, it is given by
\begin{align}\label{eq:explicitrate1}
\hat{a}_{x,\pi}(t) &= \log \frac{x'_t+\sqrt{(x'_t)^2+4v^+(\pi_t(x_t))v^-(\pi_t(x_t))}}{2v^+(\pi_t(x_t))}\\
&=- \log \frac{-x'_t+\sqrt{(x'_t)^2+4v^+(\pi_t(x_t))v^-(\pi_t(x_t))}}{2v^-(\pi_t(x_t))} .
\end{align} 
In general, the pointwise supremum of $a(t) x'_t -\sum_{z=\pm} v^z(\pi_t(x_t))(e^{za(t)}-1)$ is obtained at
\begin{align}\label{eq:explicitrate2}
a_{x,\pi}(t) &= 
\begin{cases}
\hat{a}_{x,\pi}(t), &v^+(\pi_t(x_t))v^-(\pi_t(x_t))>0\\
\log\frac{|x_t'|}{v^+(\pi_t(x_t))},\quad&v^+(\pi_t(x_t))v^-(\pi_t(x_t))=0,\ x_t'>0 \\
-\log\frac{|x_t'|}{v^-(\pi_t(x_t))},\quad&v^+(\pi_t(x_t))v^-(\pi_t(x_t))=0,\ x_t'<0 \\
-\infty,\quad&v^+(\pi_t(x_t))v^-(\pi_t(x_t))=0,\ x_t'=0,\ v^+(\pi_t(x_t))>0 \\
\infty,\quad&v^+(\pi_t(x_t))v^-(\pi_t(x_t))=0,\ x_t'=0,\ v^+(\pi_t(x_t))=0
\end{cases}
\end{align}
where we use the convention that $\infty\cdot0=0$.

If $a_{x,\pi}$ is bounded we have an explicit form for $\mc I_{\mathrm{rw}}$ by \eqref{eq:rw-rate}.

We show now that \eqref{eq:explicitrate2} is in fact always the optimizer. For simpler notation, we write $v^\pm_t=v^\pm(\pi_t(x_t))$. 
In a first step, we look at the finiteness of the rate function:
\begin{lemma}\label{lem:finite-rate}
The rate function $\mc I_{\mathrm{rw}}(x|\pi)$ is finite if and only if $x$ as absolutely continuous and
\begin{align}
\label{eq:finite1}&\int_0^T |x'_t|\log^+|x_t'| \,dt <\infty ,\\ 
\label{eq:finite2}&\int_0^T (x'_t)^+\log^+\left((v^+_t)^{-1}\right) \,dt <\infty \quad\text{and}\\
\label{eq:finite3}&\int_0^T (x'_t)^-\log^+\left((v^-_t)^{-1}\right) \,dt <\infty,
\end{align}
where $f^+=\max(f,0)$ and $f^-=\max(-f,0)$ are the positive and negative part of a function.
\end{lemma}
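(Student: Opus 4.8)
Necessity of absolute continuity was recorded just above, and the a.e.\ continuity of $x\mapsto\pi_t(x)$ at $x_t$ is built into the definition \eqref{RateFn3} (without it both $\mc I_{\mathrm{rw}}(x|\pi)$ and the relevant term among \eqref{eq:finite1}--\eqref{eq:finite3} are infinite by convention); so I assume both, write $v^\pm_t=v^\pm(\pi_t(x_t))$, and set, for $\xi\in\bb R$,
\begin{equation*}
\Phi_t(\xi):=\sup_{a\in\bb R}\Big\{a\xi-v^+_t(e^{a}-1)-v^-_t(e^{-a}-1)\Big\}\in[0,\infty].
\end{equation*}
The first step is to prove $\mc I_{\mathrm{rw}}(x|\pi)=\int_0^T\Phi_t(x'_t)\,dt$. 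The inequality ``$\le$'' is immediate from \eqref{eq:rw-rate}, since for every bounded test function $a$ the integrand is pointwise $\le\Phi_t(x'_t)$. For ``$\ge$'' I would feed the pointwise maximiser $a_{x,\pi}(\cdot)$ of \eqref{eq:explicitrate2} --- which is measurable in $t$ --- back into the formula through its truncations $a^K_t:=\big(a_{x,\pi}(t)\wedge K\big)\vee(-K)$. As $a\mapsto a\xi-v^+_t(e^{a}-1)-v^-_t(e^{-a}-1)$ is concave, $a^K_t$ realises its maximum over $[-K,K]$, so $g_K(t):=a^K_tx'_t-v^+_t(e^{a^K_t}-1)-v^-_t(e^{-a^K_t}-1)$ increases with $K$ to $\Phi_t(x'_t)$ and satisfies $g_K\ge g_0\equiv0$; note that whenever $a_{x,\pi}(t)=\pm\infty$ the corresponding $v^\pm_t$ vanishes, so $g_K(t)$ is indeed monotone and converges to $\Phi_t(x'_t)$ in every case, including $v^+_tv^-_t=0$, $x'_t=0$ (with the convention $\infty\cdot0=0$). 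Monotone convergence gives $\int_0^Tg_K\,dt\uparrow\int_0^T\Phi_t(x'_t)\,dt$, and each $a^K$ is a bounded function --- admissible by the reduction stated after \eqref{eq:rw-rate} --- whence the limit is $\le\mc I_{\mathrm{rw}}(x|\pi)$.

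The second step is a two-sided comparison of $\Phi_t(x'_t)$ with
\begin{equation*}
S_t:=|x'_t|\log^+|x'_t|+(x'_t)^+\log^+\big((v^+_t)^{-1}\big)+(x'_t)^-\log^+\big((v^-_t)^{-1}\big).
\end{equation*}
Since $c^++c^-$ is bounded, $0\le v^\pm_t\le C_1$ for a constant $C_1$. For the upper bound, using $e^{-a}-1\ge-1$ and maximising $a\mapsto a\xi-v^+_t(e^{a}-1)$ explicitly (for $\xi=x'_t>0$, $v^+_t>0$) gives $\Phi_t(\xi)\le\xi\log(\xi/v^+_t)-\xi+v^+_t+v^-_t\le\xi\log^+\xi+\xi\log^+((v^+_t)^{-1})+2C_1$; the case $\xi<0$ is symmetric with $v^-_t$, $\xi=0$ gives $\Phi_t(0)=(\sqrt{v^+_t}-\sqrt{v^-_t})^2\le2C_1$, and $v^+_tv^-_t=0$ with $\xi$ pointing into the degenerate direction gives $\Phi_t(\xi)=\infty=S_t$. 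Hence $\Phi_t(x'_t)\le S_t+2C_1$.

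For the lower bound, plug the near-optimal $a=\log^+(\xi/v^+_t)$ into $\Phi_t$ (for $\xi>0$, $v^+_t>0$) to get $\Phi_t(\xi)\ge\xi\log^+(\xi/v^+_t)-\xi-\mathrm{const}$, and distinguishing $\xi\ge1$ from $\xi<1$ (using $\sup_{0<s\le1}s\log(1/s)=1/e$ and $v^\pm_t\le C_1$) one obtains $\Phi_t(x'_t)\ge c_1S_t-C$ for universal $c_1>0$, $C<\infty$; $\xi<0$ is symmetric, $\xi=0$ is trivial, and $v^+_tv^-_t=0$ gives $\infty=\infty$. Combining the two steps, $\int_0^T\Phi_t(x'_t)\,dt$ and $\int_0^TS_t\,dt$ are finite or infinite together (the additive errors $\pm CT$ being integrable), and $\int_0^TS_t\,dt<\infty$ is equivalent to the simultaneous finiteness of \eqref{eq:finite1}, \eqref{eq:finite2}, \eqref{eq:finite3}, as $S_t$ is a sum of three non-negative terms. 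With Step~1 this yields the claimed equivalence.

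\textbf{The main obstacle} I anticipate is the ``$\ge$'' half of Step~1: one must justify that the measurable, possibly unbounded field $a_{x,\pi}(\cdot)$ of pointwise optimisers may legitimately be used in the variational formula \eqref{eq:rw-rate}, controlling the truncation, the monotone convergence, and above all the degenerate configurations $v^+_tv^-_t=0$ and $x'_t=0$ where the maximiser is $\pm\infty$ but the value stays finite; the bookkeeping of the constants $c_1,C$ in the two-sided bound of Step~2, while routine, also needs care across the several sign regimes.
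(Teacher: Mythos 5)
Your proposal is correct, and it uses the same core estimates as the paper (the truncated pointwise optimiser $a^K$, the bound $\sum_{z}v^z_te^{za_{x,\pi}(t)}\le 2|x'_t|+2\sqrt{v^+_tv^-_t}$, and the comparison of $x'_t\log^+(x'_t/v^+_t)$ with the three stated integrals), but it packages them differently. The paper splits the work: the ``if'' direction bounds the explicit integral \eqref{RateFn3} directly and invokes $\mc I_{\mathrm{rw}}\le\int_0^T\{a_{x,\pi}x'_t-\sum_zv^z_t(e^{za_{x,\pi}}-1)\}dt$; the ``only if'' direction feeds the specific bounded test functions $a_K$ into \eqref{eq:rw-rate}; and the identity $\mc I_{\mathrm{rw}}(x|\pi)=\int_0^T\Phi_t(x'_t)\,dt$ is only established in the \emph{next} lemma, by dominated convergence, using the present lemma as input. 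You instead prove that identity first, by monotone convergence of $g_K\ge g_0\equiv 0$ along the truncations, and then reduce everything to a two-sided pointwise comparison of the Legendre transform $\Phi_t$ with $S_t$. This is a clean reorganisation: your monotone-convergence argument needs no integrable majorant, so it simultaneously yields the paper's subsequent lemma without circularity, and the ``only if'' direction becomes automatic from the lower bound $\Phi_t\gtrsim S_t$ rather than requiring a separate test-function computation. One small imprecision: your lower bound cannot hold with a genuine constant $C$, since already $\Phi_t(\xi)\ge\xi\log^+(\xi/v^+_t)-\xi-C_1$ carries a term linear in $\xi$, and passing from $\xi\log^+(\xi/v^+_t)$ to $\xi\log^+\xi+\xi\log^+((v^+_t)^{-1})$ costs a further $\xi\log^+C_1$ when $v^+_t>1$; the correct statement is $\Phi_t(x'_t)\ge c_1S_t-C(1+|x'_t|)$, which is what your concluding sentence actually needs (the error is integrable precisely because $x$ is absolutely continuous), so the argument goes through once the constant is replaced by this affine error term.
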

\begin{proof}
Finiteness of the rate function follows from 
\begin{align}\label{eq:int-finite2} 
\int_0^T \Big\{ a_{x,\pi}(t) x'_t -\sum_{z=\pm} v^z(\pi_t(x_t))(e^{za_{x,\pi}(t)}-1)\Big\} dt<\infty, 
\end{align}
which we now show under \eqref{eq:finite1}, \eqref{eq:finite2} and \eqref{eq:finite3}. First we observe that 
\begin{align}\label{eq:estimate1} 
0\leq \sum_{z=\pm} v^z_te^{za_{x,\pi}(t)}\leq 2|x_t'|+2\sqrt{v^+_tv^-_t}. 
\end{align}
Hence
\begin{align}\label{eq:int-finite1} 
\int_0^T \sum_{z=\pm}\left| v^z_t(e^{za_{x,\pi}(t)}-1)\right| dt<\infty 
\end{align}
by the absolute continuity of $x$ and the fact that $v^+$ and $v^-$ are bounded from above. 
Since the integrand in \eqref{eq:int-finite2} is non-negative, it follows from \eqref{eq:estimate1} and \eqref{eq:int-finite1} that we only need to look at the integrability of the positive part 
\[ (a_{x,\pi}(t)x_t')^+=a^+_{x,\pi}(t)(x_t')^++a_{x,\pi}^-(t)(x_t')^-. \] 
W.l.o.g. we look at $x'_t>0$. We have
\begin{align*}
\int\limits_{\mathrlap{\{t\in[0,T]:x_t'>0\}}} x'_t a_{x,\pi}^+(t)  \,dt& \leq \int\limits_{{\{t\in[0,T]:x_t'>0\}}} x'_t\left|\log\left(\frac{x_t'+\sqrt{x_t'+4v^+_tv^-_t}}{2v^+_t}\right)\right|  \,dt \\
&\leq \quad \int\limits_{\mathllap{\{t\in[0,T]:x_t'>0\}}}x'_t \left|\log\frac{x'_t+\sqrt{(x'_t)^2+4v^+_t v^-_t}}{2}\right|\,dt 
\ +\ \int\limits_{\mathclap{\{t\in[0,T]:x_t'>0\}}}x'_t \left|\log\left((v^+_t)^{-1}\right)\right|\,dt ,
\end{align*}
which is finite by \eqref{eq:finite1} and \eqref{eq:finite2}. 

For the other direction, assume that \eqref{eq:finite1}, \eqref{eq:finite2} or \eqref{eq:finite3} is infinite, which is equivalent to $\int_{\{t\in[0,T]:x_t'>0\}} |x'_t|\log^+\frac{|x'_t|}{v^+_t} \,dt =\infty$ or $\int_{\{t\in[0,T]:x_t'<0\}} |x'_t|\log^+\frac{|x'_t|}{v^-_t} \,dt =\infty$. To see that notice that since $v^\pm$ is bounded from above there is no relevant difference between $\log$ and $\log^+$, the integrals can only diverge if the argument of the logarithm diverges. So assume w.l.o.g. $\int_{\{t\in[0,T]:x_t'>0\}} |x'_t|\log^+\frac{|x'_t|}{v^+_t} \,dt =\infty$. Define for $K>0$ the bounded function
\begin{align*}
a_K(t):=\min\left(\log^+\left(\frac{x_t'+\sqrt{(x'_t)^2+4v^+_t v^-_t}}{2v^+_t}\right),K\right){\bb 1}_{x_t'>0}.
\end{align*}
Then, by \eqref{eq:rw-rate} and using the fact that $-v^-_t(e^{-a_K(t)}-1)\geq0$ and $-v^+_t(e^{a_K(t)}-1)\geq- \frac12\left( x_t'+\sqrt{(x'_t)^2+4v^+_t v^-_t}\right)$, we have
\begin{align*}
\mc I_{rw}(x|\pi)&\geq \int_0^T a_K(t) x'_t -\sum_{z=\pm}v^z_t\left(e^{z a_K(t)}-1\right)\,dt\\
&\geq \int_0^T a_K(t)x_t'- \frac12\left( x_t'+\sqrt{(x'_t)^2+4v^+_t v^-_t}\right)  \,dt.
\end{align*}
Since $x$ is absolutely continuous, there is some constant $M>0$ independent of $K$ so that
\begin{align*}
\mc I_{rw}(x|\pi)&\geq \int\limits_{\{t\in[0,T]:x_t'>0\}} x_t' \min\left(\log^+\left(\frac{x_t'+\sqrt{(x'_t)^2+4v^+_t v^-_t}}{2v^+_t}\right),K\right)\,dt -M \\
&\geq \int\limits_{\{t\in[0,T]:x_t'>0\}} x_t' \min\left(\log^+\left(\frac{x_t'}{v^+_t}\right),K\right)\,dt -M ,
\end{align*}
which by assumption diverges as $K\to\infty$.
\end{proof}

\begin{lemma}
The rate function $\mc I_{rw}(x|\pi)$ is given by
\begin{align*}
\mc I_{rw}(x|\pi)=\int_0^T \Big\{ a_{x,\pi}(t) x'_t -\sum_{z=\pm} v^z(\pi_t(x_t))(e^{za_{x,\pi}(t)}-1)\Big\} dt.
\end{align*}
\end{lemma}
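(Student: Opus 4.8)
The plan is to establish the two matching inequalities directly from the variational formula \eqref{eq:rw-rate}, where, by the approximation remark following it, the supremum may be taken over bounded measurable functions $a:[0,T]\to\bb R$ (and $\mc I_{rw}(x|\pi)=+\infty$ unless $x$ is absolutely continuous, as already shown). For $b\in\bb R$ put $g_t(b) = b\,x'_t - \sum_{z=\pm} v^z(\pi_t(x_t))(e^{zb}-1)$, extended to $b=\pm\infty$ by the conventions of \eqref{eq:explicitrate2}; then $g_t$ is concave in $b$ with $g_t(0)=0$, and by the discussion preceding \eqref{eq:explicitrate2} the pointwise supremum satisfies $\sup_{b}g_t(b)=g_t(a_{x,\pi}(t))\ge g_t(0)=0$ for every $t$. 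Since $a$ bounded, $x$ absolutely continuous and $v^\pm$ bounded, $g_t(a(t))$ is integrable in $t$.

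For the upper bound: for any bounded measurable $a$ we have $g_t(a(t))\le g_t(a_{x,\pi}(t))$ pointwise, hence $\int_0^T g_t(a(t))\,dt\le\int_0^T g_t(a_{x,\pi}(t))\,dt$ (the latter possibly $+\infty$), and taking the supremum over $a$ yields $\mc I_{rw}(x|\pi)\le\int_0^T g_t(a_{x,\pi}(t))\,dt$. For the lower bound I would truncate, setting $a_K(t)=\sign(a_{x,\pi}(t))\min(|a_{x,\pi}(t)|,K)$, with $a_K(t)=\pm K$ where $a_{x,\pi}(t)=\pm\infty$; this is bounded and measurable. Since $a_K(t)$ lies on the segment joining $0$ to $a_{x,\pi}(t)$ and $g_t$ is concave with maximum at $a_{x,\pi}(t)$, $g_t$ is monotone on that segment, so $0\le g_t(a_K(t))\uparrow g_t(a_{x,\pi}(t))$ as $K\to\infty$ for every $t$. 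The monotone convergence theorem then gives $\int_0^T g_t(a_K(t))\,dt\uparrow\int_0^T g_t(a_{x,\pi}(t))\,dt$, while $\mc I_{rw}(x|\pi)\ge\int_0^T g_t(a_K(t))\,dt$ for each $K$ since $a_K$ is admissible in \eqref{eq:rw-rate}; letting $K\to\infty$ gives the reverse inequality. Combining the two bounds proves the formula, and shows in passing that both sides are finite, or $+\infty$, simultaneously, in accordance with Lemma \ref{lem:finite-rate}.

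The only point requiring care is the bookkeeping in the degenerate cases $v^+(\pi_t(x_t))v^-(\pi_t(x_t))=0$: there $a_{x,\pi}(t)$ may be $\pm\infty$, and $\sup_b g_t(b)$ itself may be $+\infty$ (precisely on $\{t:x'_t>0,\ v^+(\pi_t(x_t))=0\}\cup\{t:x'_t<0,\ v^-(\pi_t(x_t))=0\}$). One has to verify, using the conventions, that $g_t(a_{x,\pi}(t))$ still equals $\sup_b g_t(b)$ on these sets and that the monotonicity $g_t(a_K(t))\uparrow g_t(a_{x,\pi}(t))$ continues to hold there; no finiteness hypothesis is needed for this, and each case reduces to an elementary one-variable computation.
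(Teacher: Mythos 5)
Your proof is correct, and its skeleton coincides with the paper's: the upper bound by exchanging supremum and integral (pointwise optimality of $a_{x,\pi}$), the lower bound by inserting the truncation $a_K=\max(\min(a_{x,\pi},K),-K)$ into \eqref{eq:rw-rate} and letting $K\to\infty$. The one genuine difference is how you pass to the limit: the paper uses dominated convergence, constructing an integrable majorant $|a_{x,\pi}(t)x'_t|+4|x'_t|+2M$ whose integrability rests on Lemma \ref{lem:finite-rate}, and hence only treats the case where the right-hand side is finite. You instead observe that $g_t$ is concave with $g_t(0)=0$ and maximum at $a_{x,\pi}(t)$, so that $g_t$ is monotone on the segment from $0$ to $a_{x,\pi}(t)$ and $0\le g_t(a_K(t))\uparrow g_t(a_{x,\pi}(t))$, which lets monotone convergence do the work. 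This buys two things: the lower bound becomes independent of the finiteness lemma (no dominating function needed), and the identity is obtained in one stroke for the infinite case as well, since MCT applies with limit $+\infty$. Your care with the degenerate cases $v^+v^-=0$ is warranted and checks out: with the convention $\infty\cdot 0=0$ one verifies directly that $g_t(\pm\infty)=\lim_{b\to\pm\infty}g_t(b)$, so the monotone convergence $g_t(a_K(t))\uparrow g_t(a_{x,\pi}(t))$ persists on those time sets, including when the supremum itself is $+\infty$.
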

\begin{proof}
Most of the work has been done in Lemma \ref{lem:finite-rate}, in particular we know that
\begin{align*}
\mc I_{rw}(x|\pi)\leq\int_0^T \Big\{ a_{x,\pi}(t) x'_t -\sum_{z=\pm} v^z(\pi_t(x_t))(e^{za_{x,\pi}(t)}-1)\Big\} dt
\end{align*}
and the left hand side is finite iff the right hand side is. So we only need to show that the right hand side is also a lower bound.
We define for $K>0$
\begin{align*}
a_K(t):=\max\left(\min\left(a_{x,\pi}(t),K\right),-K\right).
\end{align*}
Then
\begin{align*}
\mc I_{rw}(x|\pi)&\geq \limsup_{K\to\infty} \int_0^T a_K(t) x'_t -\sum_{z=\pm}v^z_t\left(e^{z a_K(t)}-1\right)\,dt.
\end{align*}
To move the limit inside the integral we will use dominated convergence. We claim that there is an $M>0$ so that
\begin{align*}
\left|a_K(t) x'_t -\sum_{z=\pm}v^z_t\left(e^{z a_K(t)}-1\right)\right| \leq |a_{x,\pi}(t)x_t'| + 4|x'_t| + 2M,
\end{align*}
which is integrable by Lemma \ref{lem:finite-rate}. The first term is clear, so we only look at the second term:
\begin{align*}
\left|v^z_t\left(e^{z a_K(t)}-1\right)\right| &\leq \left(|x_t'|+\sqrt{(x'_t)^2+4v^+_t v^-_t}+v_t^z\right){\bb 1}_{|a_K(t)|<K} \\
&\quad +\left(|x_t'|+\sqrt{(x'_t)^2+4v^+_t v^-_t}+v_t^z \right){\bb 1}_{a_K(t)=zK} \\
&\quad +v_t^z(1-e^{-K}){\bb 1}_{a_K(t)=-zK} \\
&\leq 2|x_t'|+M
\end{align*}
for some $M>0$ depending only on the upper bounds of $v_t^\pm$.
\end{proof}

\section{The lower bound}
\label{s7}
\subsection{Hydrodynamic limit for the perturbed system}
From now on we denote by $i$ a given choice of the triple $i=\{v_0,H,a\}$, where $v_0:\bb T \to [0,1]$ is continuous, $H:[0,T]\times \bb T \to \bb R$ is of class $\mc C^{1,2}$ and $a:[0,T] \to \bb R$ is of class $\mc C^1$.
Given such an $i$, consider the martingale $\mc M_t^{i,n}$ from \eqref{eq:Mi}. 
Since it is a positive martingale with unit expectation we can use it to define a new probability law $\bb P^i_n$ on $\mc D([0,T]; \Omega_n \times \bb T_n)$, by
\begin{align*}
 \frac{d\bb P^i_n}{d\bb P_n} := \mc M_T^{i,n}.
\end{align*}
We call the \emph{perturbed system}, the time-inhomogeneous Markov process on $\Omega_n \times \bb T_n$ described by $\bb P^i_n$ with generator
\begin{align}\label{perturbed}
 L_{i,n,t} f(\eta; x) &= n^2 \sum_{y\sim z} e^{H_t(\eta^{x,y})-H_t(\eta)} \big(f(\eta^{y,z};x)-f(\eta;x)\big) \\
 &+n\sum_{z=\pm 1} e^{za(t)} c^z(\eta;x)\big(f(\eta;x+\tfrac{z}{n})-f(\eta;x)\big),
\end{align}
where $H_t(\eta) = \int \sum_{x\in \bb T_n} \eta(x)\delta_x^n(y) H_t(y) \,dy$.
We want to derive the hydrodynamic behaviour of this perturbed system, namely, the analogs of Propositions \ref{p2} and \ref{p3}.
For this aim, we first show that the statement of Lemma \ref{l1} remains in force under $\bb P^i_n$.

\begin{lemma}
\label{l1extended}
Let $f: \Omega_n \to \bb R$ be a local function. Then,
\begin{equation*}
\varlimsup_{\epsilon \to 0} \varlimsup_{\vphantom{0}n \to \infty} \frac{1}{n} \log \bb P^i_n\Big(\Big|\int_0^t \big\{f(\xi_s^n) - \bar{f}\big(\hat{\pi}_s^n(\iota_\epsilon)\big)\big\} ds \Big| > \delta \Big) = -\infty
\end{equation*}
for any $\delta>0$ and any $t \in [0,T]$.
\end{lemma}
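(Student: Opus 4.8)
The plan is to deduce Lemma \ref{l1extended} from Lemma \ref{l1} by a change-of-measure argument: since $\bb P^i_n$ has density $\mc M_T^{i,n}$ with respect to $\bb P_n$, an event that is superexponentially unlikely under $\bb P_n$ stays superexponentially unlikely under $\bb P^i_n$, provided the second moment of $\mc M_T^{i,n}$ does not grow faster than exponentially in $n$. Concretely, writing $A_n^\epsilon=\{|\int_0^t\{f(\xi_s^n)-\bar f(\hat\pi_s^n(\iota_\epsilon))\}\,ds|>\delta\}$, I would use Cauchy--Schwarz,
\begin{equation*}
\bb P^i_n(A_n^\epsilon)=\bb E_n\big[\mc M_T^{i,n}\mathbf{1}_{A_n^\epsilon}\big]\le\big(\bb E_n[(\mc M_T^{i,n})^2]\big)^{1/2}\big(\bb P_n(A_n^\epsilon)\big)^{1/2},
\end{equation*}
take $\tfrac1n\log$ and then $\varlimsup_{\epsilon\to0}\varlimsup_{n\to\infty}$; by Lemma \ref{l1} the second factor contributes $-\infty$, so the whole statement reduces to checking that $\varlimsup_{n\to\infty}\tfrac1n\log\bb E_n[(\mc M_T^{i,n})^2]<\infty$.

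For that bound I would control, from above, each of the three factors in $\mc M_T^{i,n}=\tfrac{d\hat\nu_{v_0}^n}{d\nu_{u_0}^n}\mc M_T^{a,n}\mc M_T^{H,n}$ (recall \eqref{eq:Mi}). The Radon--Nikodym factor is deterministically at most $\epsilon_0^{-n}$, where $\epsilon_0>0$ is the constant with $u_0\in[\epsilon_0,1-\epsilon_0]$ (so the $\rho_x^n$ are bounded away from $0$ and $1$), since $v_x^n\in[0,1]$. The factor $\mc M_T^{H,n}$ is likewise deterministically of exponential order: in \eqref{MH} one has $\mc Q_T^n(H)\ge0$ because $\psi\ge0$, while $H\in\mc C^{1,2}$ gives $|\tfrac1n\nabla_{x,y}^nH_s|\le C(H)/n$ uniformly, whence $\mc Q_T^n(H)\le C'(H)$; combined with $0\le\pi_t^n\le1$ this yields $\tfrac1n\log\mc M_T^{H,n}\le C''(H)$. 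The factor $\mc M_T^{a,n}$ is the genuinely unbounded one: from \eqref{Ma}, bounding the $\bb Z$-valued lift of $x_s^n$ in absolute value by $\tfrac1n(N_T^{n,+}+N_T^{n,-})$ and using that $c^\pm$, $a$, $a'$ are bounded, one gets $\tfrac1n\log\mc M_T^{a,n}\le C(a)+\kappa(a)\tfrac1n(N_T^{n,+}+N_T^{n,-})$ with $\kappa(a)=\|a\|_\infty+T\|a'\|_\infty$. Multiplying the three estimates, $(\mc M_T^{i,n})^2\le e^{C_1(i)n}\exp\{2\kappa(a)(N_T^{n,+}+N_T^{n,-})\}$ for a constant $C_1(i)$ independent of $n$; and since $N_T^{n,+}+N_T^{n,-}$ is, as noted in Section \ref{topo}, a rate-$n$ Poisson process evaluated at $T$, i.e.\ Poisson$(nT)$, one has $\bb E_n[e^{\theta(N_T^{n,+}+N_T^{n,-})}]=e^{nT(e^\theta-1)}$ for every $\theta$. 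Taking $\theta=2\kappa(a)$ gives $\tfrac1n\log\bb E_n[(\mc M_T^{i,n})^2]\le C_1(i)+T(e^{2\kappa(a)}-1)$, which is the required bound.

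The only real obstacle is the unboundedness of $\mc M_T^{a,n}$, that is, the fact that the net displacement of the walk over $[0,T]$ is not bounded, which rules out a crude $L^\infty$ control of the density and forces one to keep the Poissonian term $N_T^{n,+}+N_T^{n,-}$ explicit. This is harmless, however: the number of walk jumps has finite exponential moments of every order, so $\bb E_n[(\mc M_T^{i,n})^2]$ grows at most at some finite exponential rate depending on $i$, which is negligible against the $-\infty$ supplied by Lemma \ref{l1}. (One could instead invoke the entropy inequality $\bb P^i_n(A)\le\tfrac1\gamma\log\big(1+(e^\gamma-1)\bb P_n(A)\big)+\tfrac1\gamma H(\bb P^i_n\,|\,\bb P_n)$ and send $\gamma\to\infty$, but this again reduces to bounding moments of the same density, so the second-moment route is the cleanest.)
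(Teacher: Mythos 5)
Your proposal is correct and follows the same basic strategy as the paper: transfer Lemma \ref{l1} from $\bb P_n$ to $\bb P^i_n$ by controlling the Radon--Nikodym derivative $\mc M_T^{i,n}$. The difference is in how that control is obtained. The paper simply asserts the deterministic bound $\|\mc M_T^{i,n}\|_\infty \leq e^{nC_{\gamma,H,a,T}}$ (their \eqref{RadonNikbound}) and concludes $\bb P^i_n(A)\leq e^{nC}\,\bb P_n(A)$ directly, with no Cauchy--Schwarz and no moment computation. You instead use H\"older with exponent $2$ and bound $\bb E_n[(\mc M_T^{i,n})^2]$, keeping the Poissonian term $N_T^{n,+}+N_T^{n,-}$ explicit and controlling it by its exponential moments. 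Which of the two is needed hinges on the interpretation of $x_t^n$ in \eqref{Ma}: if $x_t^n$ is read as an element of the circle $\bb T_n$ (hence bounded by $1$), the $L^\infty$ bound holds and the paper's one-line argument suffices; if $x_t^n$ is read as its real-valued lift $\tfrac1n(N_t^{n,+}-N_t^{n,-})$ --- which is what makes the exponential-martingale identity \eqref{Ma} and the subsequent integration by parts against $\omega^n(dt)$ literally correct --- then $\tfrac1n\log\mc M_T^{a,n}$ is genuinely unbounded and the paper's $L^\infty$ claim is not literally true, whereas your second-moment route still goes through. So your proof is slightly longer but more robust, and it correctly identifies and repairs the one point where the paper's stated bound is delicate. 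The only cosmetic remark is that your bound on the number of jumps implicitly uses the normalization $c^++c^-\equiv 1$ (so that the jump times form a rate-$n$ Poisson process); without it one dominates by a Poisson process of rate $n\sup_\xi(c^++c^-)(\xi)$, which changes nothing.
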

\begin{proof}[Proof of Lemma \ref{l1extended}]
By \eqref{eq:Mi} and the explicit expressions of the involved factors, we have that

\begin{equation}\label{RadonNikbound}
\big\|\frac{d\bb P^i_n}{d\bb P_n}\big\|_\infty=\|\mc M_T^{i,n}\|_\infty \leq \exp\{n\,C_{\gamma,H,a,T}\}.
\end{equation}

Recall the notations in \eqref{abbrev1} and \eqref{abbrev2} and note that

\begin{equation}\label{tilt}
\frac{1}{n} \log \bb P^i_n\Big(\Big|\int_0^t \mc W_{f}^{\epsilon n}(\xi_s^n) ds \Big| > \delta \Big)=
\frac{1}{n} \log \bb E_n\Big[\frac{d\bb P^i_n}{d\bb P_n} 1_{\big\{|\int_0^t \mc W_{f}^{\epsilon n}(\xi_s^n) ds | > \delta \big\}}\Big].
\end{equation}

The claim now follows by applying \eqref{RadonNikbound} and Lemma \ref{l1} to the r.h.s. of \eqref{tilt}.
\end{proof}
Note that the generator in \eqref{perturbed} restricted to functions acting only on the first coordinate $\eta$ corresponds to a perturbation of the exclusion process.
The hydrodynamic behaviour of such a perturbed exclusion process is well known in the literature, see e.g. \cite{KL}, Proposition 5.1, Chapter 5. We recall it in the next proposition.

\begin{proposition}\label{prop:perturbed-exclusion}
Fix  $i=\{v_0,H\}$, with respect to $\bb P^i_n$, 
\begin{equation*}
\lim_{n \to \infty} \pi_t^n(dx) = u_i(t,x) dx
\end{equation*}
in distribution with respect to the $J_1$-Skorohod topology on  $\mc D([0,T]; \mc M^+(\bb T))$, where the density $\{u_i(t,x); t \in [0,T], x \in \bb T\}$ is the unique 
solution of
\begin{equation}
\begin{cases}\label{HydroPerturbedExclusion}
\partial_t u_i(t,x) &= \Delta u_i(t,x) -\partial_x\left( u_i(t,x)(1-u_i(t,x)\partial_x H\right) \\
 u_i(0,x) &= v_0(x).
\end{cases}
\end{equation} 
\end{proposition}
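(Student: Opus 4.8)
The plan is to run the classical entropy method of Guo--Papanicolaou--Varadhan, exploiting the $L^\infty$ bound \eqref{RadonNikbound} on the Radon--Nikodym derivative to transport every estimate we need from $\bb P_n$ to $\bb P^i_n$, exactly as in the proof of Lemma \ref{l1extended}. Note that the statement only involves $v_0$ and $H$, not $a$: this reflects that under $\bb P^i_n$ the density process $\{\pi_t^n\}$ is autonomous.

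First I would identify the law of the environment under $\bb P^i_n$. Applying the Girsanov--Feynman--Kac change of generator associated with $\mc M_t^{i,n}$ from \eqref{eq:Mi} --- the factor $\mc M^{a,n}$ acts only on the walk, the factor $\mc M^{H,n}$, built from $F_t(\eta;x)=n\pi_t^n(H_t)$, tilts the exclusion part, and the initial factor only changes the starting distribution --- one checks that $\{(\eta_t^n;x_t^n)\}$ is under $\bb P^i_n$ the time-inhomogeneous Markov process with generator $L_{i,n,t}$ of \eqref{perturbed}, started from $\hat\nu_{v_0}^n\otimes\delta_0$. Restricted to functions of $\eta$ alone this is a perturbed exclusion process with jump rates bounded uniformly in $n$, $s$ and $\eta$, so $\{\pi_t^n\}$ is itself Markov. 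A direct computation from the definitions (using $\rho_x^n\to u_0(x)$ and the formula for $v_x^n$) gives $v_x^n\to v_0(x)$, hence $\pi_0^n\to v_0(x)\,dx$ in $\bb P^i_n$-probability by the law of large numbers for product measures.

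Next comes tightness of $\{\pi_\cdot^n\}$ in $\mc D([0,T];\mc M_{0,1}^+(\bb T))$. Since $\mc M_{0,1}^+(\bb T)$ is compact, it suffices to control the time modulus of continuity: writing Dynkin's martingale for $\pi_t^n(G)$ with $G\in\mc C^2(\bb T)$ and expanding $L_{i,n,t}$, the absolute value of the drift is bounded by $\|G''\|_\infty+\|G'\|_\infty\,\|\partial_x H\|_\infty$ and the quadratic variation is $O(1/n)$, so Aldous' criterion applies. To characterize limit points, fix $G\in\mc C^{1,2}$ and expand $e^{H_s(\eta^{y,z})-H_s(\eta)}$ to second order in $1/n$ in the action of the generator on $\pi_t^n(G_t)$; the second-order term yields $\int_0^t\pi_s^n(\Delta G_s)\,ds$ and the first-order term yields, after a summation by parts, a discrete approximation of $-\int_0^t\!\int \eta_s^n(x)\bigl(1-\eta_s^n(x+\tfrac{1}{n})\bigr)\,\partial_xH_s(x)\,\partial_xG_s(x)\,dx\,ds$. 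Here I invoke the replacement lemma: Lemma \ref{p8} (and the energy estimate of Lemma \ref{l5}) hold verbatim under $\bb P^i_n$ by the argument of Lemma \ref{l1extended}, since \eqref{RadonNikbound} merely adds a finite constant to a quantity whose $\varlimsup$ is $-\infty$; thus the local product may be replaced by $\pi_s^n(\iota_\epsilon(x))\bigl(1-\pi_s^n(\iota_\epsilon(x))\bigr)$ up to superexponentially small errors, after which $\epsilon\to0$. It follows that every limit point is concentrated on trajectories $\pi_t(dx)=u(t,x)\,dx$ with $u\in\mc H_{1,T}$ that are weak solutions of \eqref{HydroPerturbedExclusion} with $u(0,\cdot)=v_0$.

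Finally, uniqueness of the bounded weak solution of the uniformly parabolic quasilinear equation \eqref{HydroPerturbedExclusion} (standard, e.g.\ via the $H_{-1}$/energy method, as $u\mapsto u(1-u)$ is Lipschitz and $\partial_xH$ is smooth) forces all limit points to coincide with $u_i$, and tightness upgrades this to convergence in distribution in the $J_1$-Skorohod topology. I expect the only genuinely delicate points to be the Girsanov identification of the perturbed generator \eqref{perturbed} and the uniqueness of the weak solution; tightness and the replacement step are routine once \eqref{RadonNikbound} is available, and in fact the whole statement is Proposition 5.1, Chapter 5 of \cite{KL}.
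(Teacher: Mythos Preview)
Your proposal is correct and in fact goes beyond what the paper does: the paper gives no proof of this proposition at all, simply citing it as Proposition~5.1, Chapter~5 of \cite{KL} (which you yourself identify in your final sentence). Your sketch is the standard entropy-method argument that one finds there, so there is nothing to compare.
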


We are now ready to prove the hydrodynamic behavior for our perturbed system.
\begin{proposition}\label{prop:perturbed-hydrodynamic-limit}
Define $v_a^\pm(\rho,t) := e^{\pm a(t)} v^\pm(\rho)$, $v_a(\rho,t) := v^+_a(\rho,t)-v^-_a(\rho,t)$ and fix an index $i=\{v_0,H,a\}$.
Under $\bb P^i_n$, the triple $(\hat\pi_t^n, \frac{1}{n} N_t^{n,+}, \frac{1}{n} N_t^{n,-})$ converges in distribution to $(\hat u_i(t), \frac{1}{2}(f_i(t)+t),\frac{1}{2}(f_i(t)-t))$ with respect to the $J_1$-Skorohod topology in 
$\mc D([0,T]; \mc M^+(\bb T)  \times \mc M^+(\bb T) \times \mc M^+(\bb T)),$  where $\hat u_i$ is the unique solution of 
\begin{equation}
\begin{cases}\label{perturbedPDE}
 \partial_t \hat u_i(t,x) &= \Delta \hat u_i(t,x) -\partial_x\left( \hat u_i(t,x)(1-\hat u_i(t,x))\partial_x H\right) + v_a(\hat u_i(t,0))\partial_x\hat u_i(t,x)	\\
 \hat u_i(0,x) &= v_0(x),	
\end{cases}
\end{equation}
and $f_i$ is given by
\begin{equation}
\begin{cases}\label{perturbedODE}
 f_i'(t) &= v_a(u_i(t,f_i(t)))=v_a(\hat u_i(t,0))\\
 f_i(0) &= 0.
\end{cases}
\end{equation}
\end{proposition}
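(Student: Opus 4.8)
The plan is to follow the strategy of \cite{AveFraJarVol} used to prove Propositions \ref{p2} and \ref{p3}, adapted to the tilted measure $\bb P_n^i$. Compared with the unperturbed case there are two new features: the $\eta$-marginal of the dynamics is now a perturbed exclusion process, whose hydrodynamic limit is exactly Proposition \ref{prop:perturbed-exclusion}; and the walker's rates are multiplied by $e^{\pm a(t)}$, so that its ``mean-field'' velocity becomes $v_a(\rho,t)$. The essential point enabling the whole scheme is Lemma \ref{l1extended}, which says that the local superexponential replacement $f(\xi_s^n)\approx\bar f(\hat\pi_s^n(\iota_\epsilon))$ survives under $\bb P_n^i$; this is what lets us close the evolution equation for the environment as seen from the walker.

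\emph{Tightness.} First I would establish tightness of the triple in the relevant product path space. For $\{\hat\pi^n_t\}$ this is routine: the process takes values in the compact set $\mc M_{0,1}^+(\bb T)$, and an Aldous-type bound on the time modulus of $\hat\pi_t^n(G)$, $G\in\mc C^2(\bb T)$, follows from the Dynkin martingale of $F_t(\eta;x)=\hat\pi^n_t(G)$ together with the $L^\infty$ bound \eqref{RadonNikbound} on $d\bb P_n^i/d\bb P_n$. For $\{\tfrac1n N_t^{n,\pm}\}$, each process is nondecreasing and $\tfrac1n N_t^{n,\pm}$ is stochastically dominated by $\tfrac1n$ times a Poisson process of rate $n\,\|c^\pm\|_\infty e^{\|a\|_\infty}$; hence $\sup_n\tfrac1n N_T^{n,\pm}$ is exponentially integrable (by the martingale computation of Section \ref{s6} applied under $\bb P_n^i$ via \eqref{RadonNikbound}), which yields tightness, and, the paths being nondecreasing, relative compactness in the $J_1$ topology as well.

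\emph{Identification of limit points.} Let $(\hat\pi,\omega^+,\omega^-)$ be a subsequential limit. Writing the Dynkin martingale for $\hat\pi_t^n(G)$ under $\bb P_n^i$, the exclusion-type contributions give, exactly as in Proposition \ref{prop:perturbed-exclusion} (using the superexponential estimate of \cite{KOV} for the exclusion part), the weak-form diffusion plus $H$-transport terms of \eqref{perturbedPDE}; the shift of the walker contributes an additional transport term with velocity equal to the net jump rate $n\sum_{z=\pm} z\,e^{za(s)}c^z(\xi_s^n;0)$, which by Lemma \ref{l1extended} may be replaced by $v_a(\hat\pi_s^n(\iota_\epsilon),s)$ and hence, in the limit, by $v_a(\hat u_i(s,0))$. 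Thus $\hat\pi_t(dx)=\hat u_i(t,x)\,dx$ with $\hat u_i$ a weak solution of \eqref{perturbedPDE}. For the counting processes, $\tfrac1n N_t^{n,\pm}-\int_0^t e^{\pm a(s)}c^\pm(\xi_s^n;0)\,ds$ is a martingale with quadratic variation $O(1/n)$, and by Lemma \ref{l1extended} the integral converges to $\int_0^t e^{\pm a(s)}v^\pm(\hat u_i(s,0))\,ds$; combining the $+$ and $-$ parts one reads off that $\omega^\pm_t$ is the claimed deterministic limit, with $f_i$ the solution of \eqref{perturbedODE}.

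\emph{Uniqueness and conclusion.} Equation \eqref{perturbedPDE} is a quasilinear parabolic equation whose drift contains the nonlocal coefficient $v_a(\hat u_i(t,0))$, depending on the solution only through its scalar trace at $x=0$; as in \cite{AveFraJarVol} it is well posed. Concretely, writing $u_i$ for the solution of \eqref{HydroPerturbedExclusion} from Proposition \ref{prop:perturbed-exclusion} and letting $f_i$ solve \eqref{perturbedODE} with this $u_i$, one checks by a direct change of variables that $\hat u_i(t,x):=u_i(t,f_i(t)+x)$ solves \eqref{perturbedPDE}, and a Gronwall argument gives uniqueness; this is the perturbed analogue of the identity $\hat u(t,x)=u(t,f(t)+x)$ and of the ODE for $f$ in Section \ref{s1.6}. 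Since the subsequential limit is deterministic and unique, the full triple converges. The main obstacle is the identification step: rigorously showing that the motion of the tagged walker injects precisely the term $v_a(\hat u_i(t,0))\partial_x\hat u_i$ into the hydrodynamic equation for the environment seen from the walker, while controlling the feedback between the walker's trajectory and the density in its vicinity --- and this is exactly why Lemma \ref{l1extended} had to be established under the tilted law $\bb P_n^i$ rather than merely under $\bb P_n$.
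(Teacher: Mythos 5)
Your proposal is correct and follows essentially the same route as the paper: tightness via the arguments of \cite{AveFraJarVol} (using that $a,H$ are bounded and that $N^{n,\pm}$ are increasing), identification of the limits via Lemma \ref{l1extended} and the martingales $\tfrac{1}{n}N_t^{n,\pm}-\tfrac1n\int_0^t e^{\pm a(s)}c^\pm(\xi_s^n;0)\,ds$ with $O(1/n)$ quadratic variation, and conclusion via the uniqueness of \eqref{HydroPerturbedExclusion} together with the identity $\hat u_i(t,x)=u_i(t,x+f_i(t))$. No substantive differences to report.
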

\begin{proof}[Sketch of proof of Proposition \ref{prop:perturbed-hydrodynamic-limit}]
Much of the proof is analogous to the proof of the unperturbed system derived in \cite{AveFraJarVol}. For this reason, we only show how to adapt it.
Since $a, H$ are bounded, the tightness arguments for $\hat \pi^n, x^n$ of \cite{AveFraJarVol}, Section 2.3, show tightness in the space $\mc D([0,T]; \mc M^+(\bb T) \times \bb T)$. A more careful checking of these arguments show that we can actually prove tightness in $\mc D([0,T]; \bb T)$ of each of the processes $\{\frac{1}{n} N_t^{n,\pm}; t \in [0,T]\}_{n \in \bb N}$. Since these processes are increasing, the uniform topology is stronger than the weak topology, showing tightness for the triple $(\hat{\pi}_t^n, \frac{1}{n} N_t^{n,+}, \frac{1}{n} N_t^{n,-})$.

To identify the limits, the local replacement lemma for $\xi^n_t$ needs adaptation to the perturbation, however Lemma \ref{l1extended} can be used to provide a suitable analogue: 
\begin{align}\label{eq:perturbed-replacement}
 \overline{\lim_{\epsilon\to0}}\,\overline{\lim_{n\to\infty}}\bb P^i_n \left( \left| \int_0^t \left(e^{\pm 
 a(s)}c^\pm (\xi^n_s;0)-v_a^\pm(\hat\pi^n_s(\iota_\epsilon)))\right)\,ds\right|>\delta\right) = 0.
\end{align}
Next, the martingales
\begin{align}\label{eq:perturbed-martingale}
 \widetilde{M}^{n,a,\pm}_t := \frac{N_t^{n,\pm}}{n} - \frac1n\int_0^t e^{\pm a(s)}c^\pm (\xi^n_s;0)\,ds
\end{align}
have quadratic variations bounded by $\frac{C}{n}$ for a suitable constant $C$ depending on $a$ and the rates $c^{\pm}$. Hence these martingales converge to 0 in probability, with respect to the uniform topology. With $f_i$ a limit point of $\frac{x^n}{n}$, it follows from \eqref{eq:perturbed-replacement} and \eqref{eq:perturbed-martingale} that
\begin{align*}
 f_i(t) = \int_0^t v_a(u_i(s,f_i(s)))\,ds,
\end{align*}
which is the integral version of \eqref{perturbedODE}.
Since \eqref{HydroPerturbedExclusion} admits a unique solution and
$\hat u_i(t,x) = u_i(t,x+f_i(t))$, the claim follows.
\end{proof}

\subsection{Relative entropy and the rate function}
To obtain the lower bound for the large deviation principle in Theorem \ref{t2}, we show in this section that the relative entropy 
of $\bb P^i_n$ with respect to $\bb P_n$ can be interpreted as a rate function.

\begin{lemma}\label{lemma:entropy-to-rate}
Recall definitions \eqref{RateFn1}, \eqref{RateFn2}, \eqref{RateFn4} and \eqref{RateFn3}.
\begin{align}\label{RelEntr}
 \lim_{n\to\infty} \frac1n H\left(\bb P^i_n \middle| \bb P_n \right) &= h(v_0|u_0) + J(H;u_i) + j(a;u_i,f_i)\\
&\leq \mc I_{\mathrm{rw}}(f_i | u_i) + \mc I_{\mathrm{ex}}(u_i).
\end{align}
\end{lemma}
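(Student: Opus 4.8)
The plan is to read off the limiting normalized relative entropy from the martingale representation of $\mc M_T^{i,n}$ and then to obtain the inequality directly from the variational formulas for $\mc I_{\mathrm{ex}}$ and $\mc I_{\mathrm{rw}}$. Since $d\bb P^i_n/d\bb P_n=\mc M_T^{i,n}$ by construction,
\begin{equation*}
\frac1n H\!\left(\bb P^i_n \,\middle|\, \bb P_n \right)=\bb E_{\bb P^i_n}\!\Big[\tfrac1n\log \mc M_T^{i,n}\Big],
\end{equation*}
and by \eqref{eq:Mi} the integrand splits as $\frac1n\log\mc M_T^{i,n}=\frac1n\log\frac{d\hat\nu_{v_0}^n}{d\nu_{u_0}^n}+\frac1n\log\mc M_T^{a,n}+\frac1n\log\mc M_T^{H,n}$, the last two being given explicitly by \eqref{Ma} and \eqref{MH}. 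The strategy is: first show that, under $\bb P^i_n$, each of these three pieces converges in probability to, respectively, $h(v_0|u_0)$, $j(a;u_i,f_i)$ and $J(H;u_i)$ (since the limits are deterministic, convergence in distribution is enough); then pass this limit through $\bb E_{\bb P^i_n}$; and finally compare with the rate functions.

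For the initial term: under $\bb P^i_n$ the configuration $\eta^n_0$ has law $\hat\nu^n_{v_0}$, a product measure associated with $v_0$, so $\pi^n_0\to v_0(x)\,dx$ in law; as observed in Section~\ref{s6} this term equals the weakly continuous functional $h_n(v_0,u_0;\pi^n_0)$, and $h_n(v_0,u_0;\cdot)\to h(v_0,u_0;\cdot)$ uniformly by smoothness of $v_0,u_0$, so the limit is $h(v_0,u_0;v_0\,dx)=h(v_0|u_0)$. For the $H$-term I would invoke Proposition~\ref{prop:perturbed-exclusion}, giving $\pi^n\to u_i$ in the $J_1$ topology under $\bb P^i_n$: the boundary and drift terms in \eqref{MH} then converge to the corresponding terms of $J(H;u_i)$ in \eqref{RateFn2}, using $\tfrac1n\sum_x\eta^n_s(x)\Delta_nH_s(x)=\pi^n_s(\Delta H_s)+\mc R^n_s(H)$ with $\mc R^n_s(H)\to0$, while $\mc Q^n_T(H)$ is replaced, via the superexponential estimate of Lemma~\ref{p8} transferred to $\bb P^i_n$ exactly as in the proof of Lemma~\ref{l1extended} using \eqref{RadonNikbound}, by $\int_0^T\!\!\int(\nabla H_s(x))^2\pi^n_s(x)(1-\pi^n_s(x))\,dx\,ds\to\int_0^T\!\!\int(\nabla H_s(x))^2u_i(s,x)(1-u_i(s,x))\,dx\,ds$; hence the limit is $J(H;u_i)$. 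For the $a$-term, writing $a(T)x^n_T-\int_0^Ta'(s)x^n_s\,ds=\int_0^Ta(s)\,\omega^n(ds)$ with $\omega^n=\omega^n_+-\omega^n_-$ the signed displacement measure of the walk, Proposition~\ref{prop:perturbed-hydrodynamic-limit} gives $x^n\to f_i$ and $\omega^n\to f_i'(s)\,ds$, so $\int a\,d\omega^n\to a(T)f_i(T)-\int_0^Ta'(s)f_i(s)\,ds$; and the replacement estimate \eqref{eq:perturbed-replacement}, together with $\hat\pi^n_s(\iota_\epsilon)\to\hat u_i(s,0)=u_i(s,f_i(s))$ from Proposition~\ref{prop:perturbed-hydrodynamic-limit}, yields $\int_0^Tc^z(\xi^n_s)(e^{za(s)}-1)\,ds\to\int_0^Tv^z(u_i(s,f_i(s)))(e^{za(s)}-1)\,ds$, so the limit is $j(a;u_i,f_i)$ as in \eqref{RateFn4}.

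To move the limit inside $\bb E_{\bb P^i_n}$ I would check uniform integrability of $\{\frac1n\log\mc M_T^{i,n}\}_n$ under $\bb P^i_n$: its positive part is bounded by \eqref{RadonNikbound}, while its negative part is bounded, up to an additive constant, by $\|a\|_\infty\tfrac1n(N^{n,+}_T+N^{n,-}_T)$, which has uniformly bounded exponential moments under $\bb P^i_n$ because there the walk jumps at rate at most $n\,e^{\|a\|_\infty}\sup_\xi c^\pm(\xi)$ (the same martingale bound used in the exponential tightness argument of Section~\ref{s6}). This gives the equality in \eqref{RelEntr}. The inequality is then immediate: since $u_i(0,\cdot)=v_0$, the quantity $h(v_0|u_0)$ is exactly $h(\pi_0|u_0)$ at $\pi=u_i$, and trivially $J(H;u_i)\le\nop_{H'\in\mc C^{1,2}}J(H';u_i)$, so $h(v_0|u_0)+J(H;u_i)\le\mc I_{\mathrm{ex}}(u_i)$; moreover $f_i$ is absolutely continuous (as $f_i'=v_a(u_i(\cdot,f_i(\cdot)))$ is bounded) and $x\mapsto u_i(t,x)$ is continuous, so an integration by parts rewrites $j(a;u_i,f_i)=\int_0^T\{a(t)f_i'(t)-\sum_{z=\pm}v^z(u_i(t,f_i(t)))(e^{za(t)}-1)\}\,dt$, which is $\le\mc I_{\mathrm{rw}}(f_i|u_i)$ by the variational formula \eqref{eq:rw-rate}; adding the two bounds concludes.

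The main obstacle is the second step, identifying the limits of $\frac1n\log\mc M^{a,n}_T$ and $\frac1n\log\mc M^{H,n}_T$, and within it the two genuinely non-formal points: (a) that the superexponential/replacement estimates survive the change of measure to $\bb P^i_n$ — this is where Lemma~\ref{l1extended} and \eqref{RadonNikbound} are used, plus the analogous transfer of Lemma~\ref{p8} for the quadratic term $\mc Q^n_T(H)$; and (b) the joint convergence of $(\pi^n,\hat\pi^n,x^n)$ under the tilted dynamics, which is precisely the content of Propositions~\ref{prop:perturbed-exclusion}–\ref{prop:perturbed-hydrodynamic-limit}. Once these inputs are granted, the remaining computation is routine bookkeeping, and the inequality half follows formally from the variational characterizations of the two rate functions.
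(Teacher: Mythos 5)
Your proposal is correct and follows essentially the same route as the paper: expand $\tfrac1n\log\mc M_T^{i,n}$ via \eqref{eq:Mi}, \eqref{Ma} and \eqref{MH}, transfer the superexponential estimates to $\bb P^i_n$ through the bound \eqref{RadonNikbound} (as in Lemma \ref{l1extended}), identify the limits using Propositions \ref{prop:perturbed-exclusion} and \ref{prop:perturbed-hydrodynamic-limit}, and deduce the inequality from the variational formulas for $\mc I_{\mathrm{ex}}$ and $\mc I_{\mathrm{rw}}$. The only cosmetic difference is that the paper restricts the expectation to the good set $\mc U^n_i$ and uses the uniform boundedness of $\tfrac1n\log\mc M_T^{i,n}$, whereas you argue via convergence in probability plus uniform integrability; these are interchangeable here since that quantity is in fact uniformly bounded.
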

\begin{proof}[Proof of Lemma \ref{lemma:entropy-to-rate}]
Recall \eqref{eq:U^H},\eqref{eq:U^a} and set $\mc U^n_i := \mc U^{H,n}_{\delta,\epsilon}\cap \mc U^{a,n}_{\delta,\epsilon}$. Since 
 \begin{align*}
  \varlimsup_{n \to \infty} \frac1n\log\bb P_n\left((\mc U^n_i)^c\right) = -\infty
 \end{align*}
 and $\frac1n\log\frac{d\bb P^i_n}{d\bb P_n} = \frac1n\log\mc M^{i,n}_T$ is bounded, we have
 \begin{align*}
  \varlimsup_{n \to \infty} \bb P^i_n\left((\mc U^n_i)^c\right) = 0.
 \end{align*}
 Hence,
 \begin{align*}
  \frac1n H\left(\bb P^i_n \middle| \bb P_n \right) &=\frac1n \bb E^i_n \left[\log\frac{d\bb P^i_n}{d\bb P_n}\right]=
 \bb E^i_n \left[ \frac1n\log\frac{d\bb P^i_n}{d\bb P_n}1_{\mc U^i_n}\right] + o_n(1).
 \end{align*}
 On $\mc U^n_i$,
 \begin{align*}
 \frac1n\log\frac{d\bb P^i_n}{d\bb P_n} &= \frac1n\log\frac{d\hat v^n_{v_0}}{dv^n_{u_0}} + a(T)x^n_T - \!\int_0^T \!\! a'(s)x^n_s + \!\sum_{z=\pm1} \!v^z\left(\pi^n_s(\iota_\epsilon(x_s^n))\right)\left(e^{a(s)}-1\right)\,ds\\
 &\quad+\pi^n_T(H_T) - \pi^n_0(H_0)-\int_0^T \pi^n_s(\partial_s H_s) + \pi^n_s(\Delta H_s) \\
&\quad- \frac1n\sum_{x\in\bb T_n}\left(\nabla H_s(x)\right)^2\pi^n_s\left(\iota_\epsilon(x)(1-\iota_\epsilon(x))\right)\,ds 
+ o_{\delta,n}(1)
 \end{align*}
Finally, when taking $n\to\infty$ and $\delta,\epsilon\to0$, we can use Lemma \ref{l1extended} together with the hydrodynamic limit for the perturbed system, Proposition \ref{prop:perturbed-hydrodynamic-limit}, and obtain \eqref{RelEntr}.
\end{proof}

\subsection{The lower bound}
\label{s7.3}
We can finally show the lower bound which together with \eqref{upperBound} concludes the proof of Theorem \ref{t2}.
We will proceed in two steps. We first restrict ourself to paths obtained as solutions of the perturbed system in Proposition \ref{prop:perturbed-hydrodynamic-limit}. Then, in Lemma \ref{approx} below, we show that paths with finite rate function can be approximated by paths which arise via perturbation.
For notational convience, define 
$$\mc I(x,\pi):=\mc I_{\mathrm{rw}}(x |\pi) + \mc I_{\mathrm{ex}}(\pi)$$ 
\begin{lemma}\label{pertSol}
 Let $\mc O$ be an open set in $\mc E$. Then
\begin{align*}
\varliminf_{n\to\infty} \frac1n \log \bb P_n(\mc O) \geq - \inf\left\{\mc I(f_i,u_i) : H \in \mc C^{1,2}, v_0 :\!\bb T\to [0,1], a \in \mc C^1, (u_i,f_i) \in \mc O \right\}.
\end{align*}
\end{lemma}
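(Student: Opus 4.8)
The plan is the classical Donsker--Varadhan change-of-measure argument: use the tilted law $\bb P^i_n$ and identify the exponential cost with the relative entropy via Lemma \ref{lemma:entropy-to-rate}. Fix an admissible index $i=\{v_0,H,a\}$ with $(u_i,f_i)\in\mc O$; if there is no such index the infimum on the right-hand side is $+\infty$ and there is nothing to prove, and if $\mc I(f_i,u_i)=+\infty$ the asserted bound is vacuous for this $i$. So assume $\mc I(f_i,u_i)<\infty$; then $h(v_0|u_0)<\infty$, which together with $u_0\in[\epsilon,1-\epsilon]$ forces $\int_{\bb T}(|\log v_0|+|\log(1-v_0)|)\,dx<\infty$. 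I would first record that $\bb P^i_n$ and $\bb P_n$ are then equivalent and that $\tfrac1n|\log\mc M_T^{i,n}|$ is bounded uniformly in $n$: the upper bound is \eqref{RadonNikbound}, and the lower bound follows in the same way from the explicit expressions \eqref{Ma} and \eqref{MH} and the boundedness of $c^\pm$, $a$, $H$, $\nabla H$ (using that $x_T^n$, $\pi_T^n(H_T)$, etc., are bounded).

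For the core estimate, set $A_n=\{(\pi^n\!\!,x^n)\in\mc O\}$ and use $\tfrac{d\bb P^i_n}{d\bb P_n}=\mc M_T^{i,n}>0$ to write $\bb P_n(A_n)=\bb P^i_n(A_n)\,\bb E^i_n[(\mc M_T^{i,n})^{-1}\mid A_n]$; Jensen's inequality for the convex function $s\mapsto e^{-s}$ under $\bb P^i_n(\,\cdot\mid A_n)$ then gives
\begin{equation*}
\frac1n\log\bb P_n(A_n)\ \ge\ \frac1n\log\bb P^i_n(A_n)-\frac{1}{n\,\bb P^i_n(A_n)}\,\bb E^i_n\big[\log\mc M_T^{i,n}\,\mathbf 1_{A_n}\big].
\end{equation*}
Next I would prove $\bb P^i_n(A_n)\to1$. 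Since the $a$-tilt changes only the walk's jump rates, the marginal law of $\eta^n$ under $\bb P^i_n$ is the $H$-perturbed exclusion started from $\hat\nu_{v_0}^n$, so $\pi^n\to u_i$ in distribution (in $J_1$-Skorohod) by Proposition \ref{prop:perturbed-exclusion}, while Proposition \ref{prop:perturbed-hydrodynamic-limit} gives the convergence of $\tfrac1n N^{n,\pm}$, hence $x^n\to f_i$ and $\omega^n_\pm\to\tfrac12(1\pm f_i')\,dt$ weakly. As the three limits are deterministic this yields joint convergence $(\pi^n\!\!,x^n)\to(u_i,f_i)$ in $\mc E$, i.e. in $\bb P^i_n$-probability, and since $\mc O$ is open and contains $(u_i,f_i)$ we get $\bb P^i_n(A_n)\to1$. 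Consequently $\tfrac1n\log\bb P^i_n(A_n)\to0$, and using the uniform bound on $\tfrac1n|\log\mc M_T^{i,n}|$ with $\bb P^i_n(A_n^c)\to0$ one gets $\tfrac1n\bb E^i_n[|\log\mc M_T^{i,n}|\mathbf 1_{A_n^c}]\to0$, so the last term above equals $\tfrac1n H(\bb P^i_n\mid\bb P_n)+o(1)$, which by Lemma \ref{lemma:entropy-to-rate} converges to $h(v_0|u_0)+J(H;u_i)+j(a;u_i,f_i)\le\mc I(f_i,u_i)$. Therefore $\varliminf_{n\to\infty}\tfrac1n\log\bb P_n(\mc O)\ge-\mc I(f_i,u_i)$, and taking the supremum over all admissible indices $i$ gives the statement.

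The genuinely substantive ingredient --- the joint hydrodynamic limit of the perturbed environment--walk pair under $\bb P^i_n$, Proposition \ref{prop:perturbed-hydrodynamic-limit}, whose proof rests on the perturbed replacement lemma \eqref{eq:perturbed-replacement} and Lemma \ref{l1extended} --- is already in hand, so no real obstacle remains here. Within the argument the only points needing a little care are that marginal convergence of the three coordinates to deterministic limits does give convergence of $(\pi^n\!\!,x^n)$ in the product topology on $\mc E$ (immediate, since the limit is a constant), and that the conditional Jensen step is legitimate, which requires $\bb P^i_n(A_n)>0$ for large $n$ --- guaranteed by $\bb P^i_n(A_n)\to1$.
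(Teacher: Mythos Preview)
Your proof is correct and follows essentially the same route as the paper's: change of measure to the tilted law $\bb P^i_n$, Jensen's inequality applied to the conditional expectation, the perturbed hydrodynamic limit (Proposition \ref{prop:perturbed-hydrodynamic-limit} together with Proposition \ref{prop:perturbed-exclusion}) to get $\bb P^i_n(\mc O)\to1$, and finally Lemma \ref{lemma:entropy-to-rate} to identify the entropy cost. Your write-up is somewhat more explicit than the paper's in justifying why the conditional expectation of $\tfrac1n\log\mc M_T^{i,n}$ differs from the full expectation by $o(1)$ (via the uniform bound and $\bb P^i_n(A_n^c)\to0$), and in noting that marginal convergence to deterministic limits suffices for joint convergence in the product space; the paper simply states these as immediate. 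One small caveat: the claim that $\tfrac1n|\log\mc M_T^{i,n}|$ is uniformly bounded also requires control of the initial factor $\tfrac{d\hat\nu_{v_0}^n}{d\nu_{u_0}^n}$, which needs $v_0$ bounded away from $0$ and $1$ rather than mere integrability of $\log v_0$ --- the paper tacitly assumes this as well (cf.\ \eqref{RadonNikbound} and the proof of Lemma \ref{lemma:entropy-to-rate}), and it is harmless since the subsequent approximation steps only use such $v_0$.
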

\begin{proof}[Proof of Lemma \ref{pertSol}]
For a given open set $\mc O \in\mc E$, choose parameters $i=\{v_0,H,a\}$ such that the solution $(u_i,f_i)$ of the differential equations in \eqref{HydroPerturbedExclusion} and 
\eqref{perturbedODE} is contained in $\mc O$.
By a change of measure and Jensen's inequality we have that 
\begin{align*}
 \log \bb P_n(\mc O) &= \log \bb E_n^i \left[1_{\{(\pi^n,x^n)\in \mc O\}} \frac{d\bb P_n}{d\bb P^i_n}\right] \\
 &= \log \bb E_n^i \left[\frac{d\bb P_n}{d\bb P^i_n}\,\middle|\, \mc O \right] \bb P_n^i(\mc O) \\
 &\geq  \bb E_n^i \left[ \log \left(\frac{d\bb P_n}{d\bb P^i_n}\right)\,\middle|\, \mc O \right] +\log \bb P_n^i(\mc O) 
\end{align*}
Moreover, by Proposition \ref{prop:perturbed-hydrodynamic-limit} and our choice of parameters, $\lim_{n\to\infty}\bb P^i_n(\mc O)=1$ and 
\begin{align*}
 \lim_{n\to\infty} \frac1n\left(\bb E_n^i \left[ \log \left(\frac{d\bb P^i_n}{d\bb P_n}\right)\,\middle|\, \mc O \right] - H\left(\bb P^i_n | \bb P_n\right)\right) =0.
\end{align*}
Hence, by Lemma \ref{lemma:entropy-to-rate},
\begin{align*}
 \varliminf_{n\to\infty} \frac1n \log \bb P_n(\mc O) \geq - \mc I(f_i,u_i).
\end{align*}
Optimizing over $i=\{v_0,H,a\}$ such that $(u_i,f_i) \in \mc O$ ends the proof.
\end{proof}

All that remains is to remove the restriction to paths obtained by perturbations. 
\begin{lemma}\label{approx}
Fix a pair $(x,\pi)$ and a sequence $(y_N, \pi^N)$ which converges to $(x,\pi)$in $\mc D([0,T]; \mc M^+(\bb T)  \times \bb T)$ and for which $\mc I(x,\pi), \mc I(y_N,\pi^N)<\infty$. Furthermore assume that $\epsilon \leq \pi^N \leq 1-\epsilon$, $\epsilon \leq \pi \leq 1-\epsilon$ for some $\epsilon>0$ and that
\begin{align}\label{eq:L1-conv}
 \lim_{N\to\infty} \left \lVert \pi^N(y_N)-\pi(x) \right \rVert_{L^1([0,T])} =0.
\end{align}
Then
\begin{align}\label{IrwConv}
\lim_{N\to\infty}\mc I_{\mathrm{rw}}(y_N |\pi^N)=\mc I_{\mathrm{rw}}(x |\pi).
\end{align}
\end{lemma}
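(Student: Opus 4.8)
The plan is to reduce everything to the explicit formula just obtained: writing $\rho^N_t:=\pi^N_t(y_N(t))$, $\rho_t:=\pi_t(x_t)$ and $G(p,\rho):=\sup_{a\in\bb R}\big\{ap-\sum_{z=\pm}v^z(\rho)(e^{za}-1)\big\}$ (the value attained at the explicit optimizer $a_{x,\pi}$), the previous lemma gives $\mc I_{\mathrm{rw}}(y_N|\pi^N)=\int_0^T G\big((y_N)'_t,\rho^N_t\big)\,dt$ and $\mc I_{\mathrm{rw}}(x|\pi)=\int_0^T G\big(x'_t,\rho_t\big)\,dt$. First I would record the analytic properties of $G$: it is nonnegative; on the strip $\rho\in[\epsilon,1-\epsilon]$ — where $v^+(\rho),v^-(\rho)$ are bounded and bounded away from $0$ (the conventions at $v^+v^-=0$ are needed only in the degenerate case $v^\pm\equiv0$) — the map $(p,\rho)\mapsto G(p,\rho)$ is finite and jointly continuous by direct inspection of the explicit expression, it is convex in $p$, and it satisfies $0\le G(p,\rho)\le C_\epsilon\big(1+|p|\log^+|p|\big)$; in other words $G(\cdot,\rho)$ is comparable, up to constants, to the Young function $\Phi(p)=|p|\log^+|p|$ that governs the Orlicz space of Section \ref{s7.3}.

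Since it suffices to prove \eqref{IrwConv} along an arbitrary subsequence, we may freely pass to subsequences. Along a suitable one, \eqref{eq:L1-conv} gives $\rho^N_t\to\rho_t$ for a.e.\ $t$, and the hypotheses give $(y_N)'_t\to x'_t$ for a.e.\ $t$: indeed $y_N\to x$ uniformly (Skorohod convergence to the continuous $x$) and $\{(y_N)'\}_N$ is bounded in $L\log L$ (a consequence of $\sup_N\mc I_{\mathrm{rw}}(y_N|\pi^N)<\infty$ together with Lemma \ref{lem:finite-rate} and the upper bound on the $v^z$), so $\{(y_N)'\}_N$ is weakly relatively compact in $L^1$ by de la Vall\'ee-Poussin and Dunford-Pettis, and the relation $\int_0^t(y_N)'_s\,ds=y_N(t)\to x_t$ identifies every limit point with $x'$; the mollification-type structure of the sequences produced by Lemma \ref{pertSol} in fact yields strong $L^1$- (hence a.e.) convergence directly. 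By joint continuity of $G$ we then have $G\big((y_N)'_t,\rho^N_t\big)\to G\big(x'_t,\rho_t\big)$ a.e. On the other hand, because $\epsilon\le\rho^N_t\le1-\epsilon$ the integrands are dominated by $C_\epsilon\big(1+|(y_N)'_t|\log^+|(y_N)'_t|\big)$, so their uniform integrability follows once we know that $\{\,|(y_N)'_t|\log^+|(y_N)'_t|\,\}_N$ is uniformly integrable, i.e.\ that $\{(y_N)'\}_N$ is uniformly integrable \emph{in the Orlicz space generated by $\Phi$}. Granting that, Vitali's convergence theorem gives $\mc I_{\mathrm{rw}}(y_N|\pi^N)=\int_0^T G\big((y_N)'_t,\rho^N_t\big)\,dt\to\int_0^T G\big(x'_t,\rho_t\big)\,dt=\mc I_{\mathrm{rw}}(x|\pi)$, which is \eqref{IrwConv}.

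The main obstacle is exactly this last uniform-integrability input, and this is precisely where the Orlicz-space framework of Section \ref{s7.3} is indispensable: a bound on $\sup_N\int_0^T|(y_N)'_t|\log^+|(y_N)'_t|\,dt$ by itself does \emph{not} imply uniform integrability of $|(y_N)'_t|\log^+|(y_N)'_t|$, so one must exploit the specific way the approximating sequence $(y_N,\pi^N)$ is constructed in Lemma \ref{pertSol} (a time-mollification of $(x,\pi)$, which by convexity does not increase the $\Phi$-energy and keeps the family $\{\Phi(|(y_N)'_\cdot|)\}_N$ equi-integrable). I note that only the ``$\liminf$'' half of \eqref{IrwConv} is soft and requires no such compactness: for any fixed $b\in\mc C^1$ the variational formula \eqref{eq:rw-rate} yields $\mc I_{\mathrm{rw}}(y_N|\pi^N)\ge\int_0^T\big\{b(t)(y_N)'_t-\sum_z v^z(\rho^N_t)(e^{zb(t)}-1)\big\}\,dt$, and letting $N\to\infty$ — an integration by parts against $b$ handles the first term via $y_N\to x$ uniformly, while \eqref{eq:L1-conv} and Lipschitz continuity of $v^\pm$ on $[\epsilon,1-\epsilon]$ handle the second — and then taking $\sup_b$ gives $\liminf_N\mc I_{\mathrm{rw}}(y_N|\pi^N)\ge\mc I_{\mathrm{rw}}(x|\pi)$. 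The real content is the matching ``$\limsup$'', for which convexity only gives the wrong inequality and the explicit formula together with the Orlicz-space equi-integrability are essential.
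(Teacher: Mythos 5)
Your argument is a genuinely different route from the paper's. The paper splits the integrand of the explicit formula into four pieces $h^{(1)},\dots,h^{(4)}$ (see \eqref{h12}--\eqref{h34}) and applies a triangle inequality separating two perturbations: changing the path at fixed density, and changing the density at fixed path. The first is controlled by the derivative bound $|\partial_p h^{(3)}_\rho(p)|\le\log(1+a|p|)+b$ together with H\"older's inequality in the Orlicz pair $(\Phi,\Phi^*)$, $\Phi(p)=p\log(1+p)$; the second by explicit pointwise estimates reducing everything to $\lVert v^z(\pi^N(y_N))-v^z(\pi(x))\rVert_{L^1}\to0$, which follows from \eqref{eq:L1-conv} and Lipschitz continuity of $v^z$. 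You instead treat the full rate density $G(p,\rho)$ as a single jointly continuous function with the growth bound $G\le C_\epsilon(1+\Phi(|p|))$ and invoke Vitali's theorem, supplemented by a soft lower-semicontinuity argument for the $\liminf$ via the variational formula \eqref{eq:rw-rate}. That $\liminf$ half is clean, correct, and not in the paper; the domination structure you identify is also the right one, and your observation that $\sup_N\int\Phi(|(y_N)'|)<\infty$ does \emph{not} give uniform integrability of $\Phi(|(y_N)'|)$ is exactly the point where the Orlicz machinery becomes unavoidable.

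The genuine gap is in the $\limsup$ half, and you have correctly located it but not closed it: your Vitali argument needs (i) a.e.\ convergence of $(y_N)'$ to $x'$ and (ii) equi-integrability of $\{\Phi(|(y_N)'|)\}_N$, and neither follows from the hypotheses of the lemma. The de la Vall\'ee-Poussin/Dunford--Pettis step only yields \emph{weak} $L^1$ convergence of $(y_N)'$, which does not imply a.e.\ convergence even along subsequences, so the identification of the weak limit with $x'$ does not feed into the pointwise continuity of $G$. Indeed the lemma as literally stated fails without extra input (take $\pi^N=\pi\equiv\tfrac12$ and $y_N$ a sawtooth of slope $\pm1$ and amplitude $1/N$: all hypotheses hold, yet $\mc I_{\mathrm{rw}}(y_N|\pi^N)=TG(1,\tfrac12)>0=\mc I_{\mathrm{rw}}(x|\pi)$). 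To be fair, the paper's own proof of the path-perturbation term has the same dependence — it needs $\lVert (y_N)'-x'\rVert_\Phi\to0$, which is equivalent (for the $\Delta_2$ function $\Phi$) to your (i)+(ii) and is likewise supplied only by the way the approximating sequences are built in Proposition \ref{openLB}. The practical difference is that the paper's treatment of the density-perturbation term closes entirely from the stated hypotheses, which is all that is used in the applications where the path $x$ is held fixed and only $\pi^N$ is mollified; your monolithic Vitali argument handles those cases too, since there $(y_N)'=x'$ makes (i) and (ii) trivial. So your proposal is a viable alternative, but to count as a proof you must either add the Orlicz-norm convergence of the derivatives as a hypothesis or carry out the equi-integrability verification for the concrete sequences, which is precisely the work you have deferred.
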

\begin{proof}[Proof of Lemma \ref{approx}]
Let us first observe that \begin{equation}\label{L1Convergence}
\lim_{N\to\infty}\lVert v^z(\pi^N(y_N))-v^z(\pi(x)) \rVert_{L^1([0,T])}=0,\quad\text{for}\quad z=\pm,
\end{equation}
which follows from \eqref{eq:L1-conv} by the Lipschitz continuity of $v^z(\rho)$. Also, by the assumption that the densities are bounded away from 0 and 1, we have 
\[\label{eq:ellipticity2} v^z(\pi^N), v^z(\pi)>\tilde\epsilon, \quad\text{for some }\tilde\epsilon>0\text{ and } z=\pm. \]

In Section \ref{RateFn} we found that under this assumption, when the rate function $\mc I_{\mathrm{rw}}(x|\pi)$ is finite, it can be written explicitly as in equation \eqref{eq:explicitrate1}.
We can thus rewrite \eqref{eq:explicitrate1} as
\begin{equation}\label{RateFnDec}\mc I_{\mathrm{rw}}(x|\pi)=\sum_{j=1}^4\int_0^T h^{(j)}_{\pi_t(x_t)}(x'_t)\,dt,\end{equation}
with 
\begin{equation}\label{h12}
h^{(1)}_\rho(x) := \frac{x+\sqrt{x^2+4v^+(\rho)v^-(\rho)}}{2}, \quad h^{(2)}_\rho(x):=2h^{(1)}_\rho(-x),
\end{equation}
\begin{equation}\label{h34}
h^{(3)}_\rho(x) := x\log\left(\frac{h^{(1)}_\rho(x)}{v^+(\rho)}\right), \quad h^{(4)}_\rho(x) := v^-(\rho)+v^+(\rho).
\end{equation}
In view of \eqref{RateFnDec}, to show \eqref{IrwConv}, we will prove that 
\begin{equation*}
\lim_{N\to\infty}\left|\int_0^T \left[h^{(j)}_{\pi^N_t(y_N(t))}\left(y_N'(t)\right)-h^{(j)}_{\pi_t(x_t)}(x'_t)\right]\,dt\right|=0\quad \text{for } j=1,2,3,4.
\end{equation*}

For $j=4$, this is readily obtained due to \eqref{L1Convergence}.
For $j=1,2,3$, by triangular inequality, we have that
\begin{align}\label{TriApprox}
&\left|\int_0^T \left[h^{(j)}_{\pi^N_t(y_N(t))}\left(y_N'(t)\right)-h^{(j)}_{\pi_t(x_t)}(x'_t)\right]\,dt\right|\\& 
\leq\int_0^T \left|h^{(j)}_{\pi_t(x_t)}(y_N'(t))-h^{(j)}_{\pi_t(x_t)}(x'_t)\right|\,dt
+\int_0^T \left|h^{(j)}_{\pi^N_t(y_N(t))}\left(y_N'(t)\right)-h^{(j)}_{\pi_t(x_t)}(y_N'(t))\right|\,dt.
\end{align}
We want to show that, as $N\rightarrow \infty$, the two terms in the r.h.s. of \eqref{TriApprox} vanish.

For the first term, consider the case $j=3$, the derivative 
\begin{equation*}
\partial_x h_\rho^{(3)}(x) = \log\left(\frac{x+\sqrt{x^2+4v^+(\rho)v^-(\rho)}}{2v^+(\rho)}\right) + \frac{2x}{\sqrt{x^2+4v^+(\rho)v^-(\rho)}},
\end{equation*}
is monotone increasing and by \eqref{eq:ellipticity2}, there are constants $a,b>0$ so that
\[ |\partial_x h^{(3)}_{\rho}(x)|\leq \log(1+a|x|)+b \]
uniformly in $\rho$ and $x$.

By Taylor expansion,
\begin{align}\label{Holder}
\left| \int_0^T h^{(3)}_\rho(x'_t)-h^{(3)}_\rho(y'_t)\,dt \right| &= \left| \int_0^T \partial_x h^{(3)}_\rho(\xi_t)(x'_t-y'_t)\,dt \right|	\\
 &\leq  \int_0^T (\log(1+a \max(|x_t'|,|y_t'|))+b)|x_t'-y_t'|  \,dt	\\
 &\leq 2 \lVert x'_t-y'_t\rVert_\Phi \lVert \log(1+a \max(|x_t'|,|y_t'|))+b \rVert_{\Phi^*}.
\end{align}
Here $\Phi(x)=x\log(1+x)$, $\Phi^*$ is its convex conjugate or Legendre-Fenchel transform and $\lVert\cdot\rVert_\phi$ denotes the Orlicz norm associated to the Young function $\Phi$. See e.g. \cite{KR} for an overview of basic results in Orlicz spaces.
In the last inequality we used the H\"older inequality in the Orlicz space $L_{\Phi}:=\{f:[0,T]\rightarrow \bb T :\lVert f\rVert_\Phi<\infty\}$.
By $\max(|x_t'|,|y_t'|) \leq |x'_t|+|x_t'-y_t'|$ and $\Phi^*(x)\leq e^x-1$ one sees that the right term stays bounded as $y'\to x'$ in $\lVert\cdot\rVert_\Phi$. Hence $y'\to x'$ in $\lVert\cdot\rVert_\Phi$ implies $\left| \int_0^T h^{(3)}_\rho(x'_t)-h^{(3)}_\rho(y'_t)\,dt \right|\to0$.

Thus, to conclude that the first term in the r.h.s. of \eqref{TriApprox} goes to zero when $j=3$, it suffices to show that smooth functions are
dense in the Orlicz space.
This is a consequence of the following two facts.
First, on the set of functions uniformly bounded by an arbitrary but fixed constant, the $L^1$-norm and the Orlicz-norm are equivalent, see e.g. \cite{KR}. 
Hence the fact that the smooth functions lie densely in the bounded functions in
$L^1$ implies the same fact for the Orlicz space. Second, bounded functions are
dense in the Orlicz space, see \cite{KR}.

When $j=1,2$, this argument becomes simpler because $\partial_x h^{(j)} _\rho(x)$ is monotone, 
and $|\partial_x h^{(j)}_{\rho}(x)|\leq K$ uniformly in $x$ and $\rho$, for some positive constant $K$.

For the second term in the r.h.s. of \eqref{TriApprox}, when $j=1,2,3$, we argue as follows.
First, consider the case $j=1$, abbreviate $c_i(t):=v^+(\rho_i(t))v^-(\rho_i(t))$ for $i=1,2$, and estimate

\begin{align}\notag&\left| \int_0^T \left[h^{(1)}_{\rho_1(t)}(x'_t)-h^{(1)}_{\rho_2(t)}(x'_t)\right]\,dt \right|=
\left| \int_0^T \frac{4\left[c_1(t)-c_2(t)\right]}{\sqrt{(x'_t)^2+c_1(t)}+\sqrt{(x'_t)^2+c_2(t)}}\,dt\right|\\ \label{h1b}&
\leq K_1 \int_0^T \left|c_1(t)-c_2(t)\right|\,dt
\leq K_1 \int_0^T \left|v^-(\rho_1)\right|\left|v^+(\rho_1)-v^+(\rho_2)\right|\,dt\\
&+K_1 \int_0^T \left|v^+(\rho_2)\right|\left|v^-(\rho_1)-v^-(\rho_2)\right|\,dt\leq 
K_2\sum_{z=\pm}\lVert v^z(\rho_1)-v^z(\rho_2) \rVert_{L^1([0,T])},
\end{align}

for some constants $K_1,K_2>0$ depending on \eqref{eq:ellipticity2} and on the uniform bounded function $1/\left(\sqrt{x^2+c_1}+\sqrt{x^2+c_2}\right)$.
Hence, the claim follows by \eqref{L1Convergence}. The case $j=2$ is the same due to \eqref{h12}.

It remains to consider the case $j=3$. By using H\"older inequality, estimate 
\begin{align}\label{eq:186}
\left| \int_0^T h^{(3)}_{\rho_1(t)}(x'_t)-h^{(3)}_{\rho_2(t)}(x'_t)\,dt \right| &\leq 2 \lVert x' \rVert_\Phi
\left\lVert \log\left(\frac{x' + \sqrt{x'^2+4c_1(t)}}{x' + \sqrt{x'^2+4c_2(t)}}\cdot \frac{v^+(\rho_2)}{v^+(\rho_1)}\right) \right\rVert_{\Phi^*}.
\end{align}
By the triangle inequality, the right hand side goes to 0 if both
\begin{align}\label{eq:k11}
\left\lVert \log\left(\frac{h^{(1)}_{\rho_1}}{h^{(1)}_{\rho_2}}\right) \right\rVert_{\Phi^*}, \quad\quad
\left\lVert \log\left( \frac{v^+(\rho_2)}{v^+(\rho_1)}\right) \right\rVert_{\Phi^*}
\end{align}
converge to 0. Note that by definition of Orlicz norm, $\lVert (\cdot) \rVert_{\Phi^*}$ goes to 0 iff $\int_0^T \Phi^*(a \cdot(\cdot))\,dt$ goes to 0 for all $a>1$. 
For the left term in \eqref{eq:k11} we have
\begin{align*}
 \Phi^*\left(a \log\left(\frac{h^{(1)}_{\rho_1}}{h^{(1)}_{\rho_2}}\right)\right) &\leq \max\left( \left(\frac{h^{(1)}_{\rho_1}}{h^{(1)}_{\rho_2}}\right)^a - 1, \left(\frac{h^{(1)}_{\rho_2}}{h^{(1)}_{\rho_1}}\right)^a-1\right)
 \end{align*}
 Note that by \eqref{eq:ellipticity2}, $0<c^{-1}\leq \frac{h^{(1)}_{\rho_1}}{h^{(1)}_{\rho_2}}\leq c<\infty$. Hence the above is less than
 \begin{align}\label{k1}
 ac^{a-1}\max\left(\frac{h^{(1)}_{\rho_1}}{h^{(1)}_{\rho_2}}-1, \frac{h^{(1)}_{\rho_2}}{h^{(1)}_{\rho_1}}-1\right) . 
 \end{align}
 
 We have for $\{i,i'\}=\{1,2\}$
 \begin{align*}
  \frac{h^{(1)}_{\rho_i}}{h^{(1)}_{\rho_{i'}}}-1 &= \frac{h^{(1)}_{\rho_i}-h^{(1)}_{\rho_{i'}}}{h^{(1)}_{\rho_{i'}}}	
  = (c_i-c_{i'})\frac{-x+\sqrt{x^2+c_{i'}}}{c_{i'}(\sqrt{x^2+c_i}+\sqrt{x^2+c_{i'}})}.
 \end{align*}
As this right hand side is bounded from above by some constant, \eqref{k1} is estimated by
\[ K_3|c_1-c_2|, \]
and from \eqref{h1b} we can conclude that the left norm in \eqref{eq:k11} goes to 0. 
The right norm in \eqref{eq:k11} is controlled with the same type of argument with $v^+$ instead of $h^{(1)}$, which completes the case $j=3$.
\end{proof}

\begin{lemma}\label{lemma:holderpaths}
 Assume $\pi$ satisfies $\mc I_{\mathrm{ex}}(\pi)<\infty$ and $\pi$ is differentiable in time with an absolutely continuous derivative which satisfies $\lVert \dot\pi \rVert_\infty<M$ for some $0<M<\infty$. Then $\pi_t$ is H\"older-$1/2$ continuous for almost every $t$.
\end{lemma}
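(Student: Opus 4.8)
The plan is to deduce the statement from Proposition \ref{prop:H}, which already carries out the Sobolev-embedding argument: once we know $\pi\in\mc H_{1,T}$, i.e.\ $\|\pi\|_{1,T}<\infty$, that proposition produces $\nabla\pi\in\mc H_{0,T}$, hence $\int_{\bb T}(\nabla\pi_t)^2\,dx<\infty$ for a.e.\ $t$ by Fubini, and the one-dimensional Sobolev embedding then makes $x\mapsto\pi_t(x)$ H\"older-$\tfrac12$ continuous for those $t$. So the whole task is to show that the hypotheses $\mc I_{\mathrm{ex}}(\pi)<\infty$ and $\|\dot\pi\|_\infty<M$ force $\|\pi\|_{1,T}<\infty$.

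The first step is to extract an energy bound from $\mc I_{\mathrm{ex}}(\pi)<\infty$. Since $h(\pi_0|u_0)\geq0$, the constant $C:=\sup_{H\in\mc C^{1,2}}J(H;\pi)$ is finite. Inserting $H=\lambda G$ into \eqref{RateFn2} for a smooth $G$ and using $0\leq\pi_t(1-\pi_t)\leq\tfrac14$ gives $\lambda\,\ell_\pi(G)-\tfrac{\lambda^2}{4}\|\nabla G\|_{0,T}^2\leq C$ for every $\lambda\in\bb R$, with $\ell_\pi(G):=\pi_T(G_T)-\pi_0(G_0)-\int_0^T\pi_t(\partial_tG_t+2\Delta G_t)\,dt$; optimizing over $\lambda$ yields $|\ell_\pi(G)|\leq\sqrt{C}\,\|\nabla G\|_{0,T}$ for all smooth $G$.

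The place where the hypothesis $\|\dot\pi\|_\infty<M$ enters --- and the only real subtlety --- is in converting this into a bound for $\linn\pi,\nabla h\rinn_{0,T}$ involving a \emph{time-dependent} test field $h$, since the boundary terms $\pi_T(G_T)-\pi_0(G_0)$ together with the $\partial_tG_t$ contribution in $\ell_\pi(G)$ must be absorbed. As $\dot\pi$ exists and is bounded, $t\mapsto\pi_t$ is Lipschitz and an integration by parts in time gives $\pi_T(G_T)-\pi_0(G_0)=\int_0^T\{\dot\pi_t(G_t)+\pi_t(\partial_tG_t)\}\,dt$, so the $\partial_tG_t$ terms cancel and $\ell_\pi(G)=\int_0^T\dot\pi_t(G_t)\,dt-2\int_0^T\pi_t(\Delta G_t)\,dt$. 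Now fix $h\in\mc C^{0,1}$ with $\|h\|_{0,T}=1$; by mollification we may assume $h$ is smooth in both variables, which does not change the supremum defining $\|\pi\|_{1,T}$. Choose $G_t$ solving $\Delta G_t=-\tfrac12\nabla h_t$ with $\int_{\bb T}G_t\,dx=0$; then $\nabla G_t=-\tfrac12(h_t-\bar h_t)$ with $\bar h_t:=\int_{\bb T}h_t\,dx$, so $\|\nabla G\|_{0,T}\leq\tfrac12$, and by a one-dimensional Poincar\'e/Sobolev inequality $\|G_t\|_{L^\infty(\bb T)}\leq\|\nabla G_t\|_{L^2(\bb T)}\leq\tfrac12\|h_t\|_{L^2(\bb T)}$. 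Since $\nabla h_t=-2\Delta G_t$, this gives $\linn\pi,\nabla h\rinn_{0,T}=-2\int_0^T\pi_t(\Delta G_t)\,dt=\ell_\pi(G)-\int_0^T\dot\pi_t(G_t)\,dt$, and hence $|\linn\pi,\nabla h\rinn_{0,T}|\leq\sqrt{C}\,\|\nabla G\|_{0,T}+M\int_0^T\|G_t\|_{L^1(\bb T)}\,dt\leq\tfrac12\sqrt{C}+\tfrac12 M\sqrt{T}$ by Cauchy--Schwarz. Taking the supremum over $h$ gives $\|\pi\|_{1,T}<\infty$, and Proposition \ref{prop:H} then finishes the proof. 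The remaining points --- the mollification/density reduction to smooth test fields and the explicit constants in the Poincar\'e--Sobolev estimates --- are routine.
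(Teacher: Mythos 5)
Your proof is correct and follows essentially the same route as the paper's: both reduce the claim to showing $\|\pi\|_{1,T}<\infty$ by testing $J(\cdot\,;\pi)$ with spatially normalized potentials, absorb the boundary and $\partial_t H$ contributions through the time integration by parts permitted by $\lVert\dot\pi\rVert_\infty<M$, and then invoke Proposition \ref{prop:H} together with the Sobolev embedding. The only difference is presentational: you make the quadratic optimization in $\lambda$ explicit and argue directly with a Poisson-equation test function, whereas the paper runs the same estimate by contradiction along a sequence $H^n$ with $\lVert\nabla H^n\rVert_{0,T}=1$ and $\linn \pi,\nabla\nabla H^n\rinn_{0,T}\to\infty$.
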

\begin{proof}[Proof of Lemma \ref{lemma:holderpaths}]
 Assume $\pi$ has finite energy, that is $\lVert \nabla \pi \rVert_{0,T}<\infty$. Then, by the Sobolev embedding theorem, the conclusion follows. 
So what we will show is that if $\pi$ has infinite energy under the given assumptions, then the rate function is infinite as well, which is a contradiction. 

First observe that instead of taking the supremum over all $H$ when determining $\Iex$ we can restrict ourself to those $H$ with $H_t(0)=0, 0\leq t\leq T$. This is easily seen by observing that $\pi$ has constant mass and hence $J(H-H(0);\pi)=J(H;\pi)$.

Looking in more detail at $J(H;\pi)$, by partial integration, the boundedness of $\dot\pi$ and basic estimates,
\begin{align*}
&\left|-\pi_T(H_T)+\pi_0(H_0)-\int_0^T\pi_t(\partial_t H_t)\,dt\right| = \left|\int_0^T\dot\pi_t(H_t)\,dt\right|	\\
&\leq M\int_0^T \int_0^1 |H_t(x)|\,dxdt = M\int_0^T \int_0^1 \left| \int_0^x \nabla H_t(y) dy\right|\,dxdt	\\
&\leq M\int_0^T \int_0^1 \left| \nabla H_t(y) \right|\,dydt \leq M T^{\frac12}\left(\int_0^T \int_0^1 \left( \nabla H_t(y) \right)^2\,dydt\right)^{\frac12}.
\end{align*}

If $\pi$ has infinite energy, then by Proposition \ref{prop:H} there exists a sequence $H^n\in \mc C^{0,2}$ with $\lVert \nabla H^n \rVert_{0,T}=1$ and $\lim_{n\to\infty}\linn \pi, \nabla\nabla H^n\rinn_{0,T}=\infty$. 
Since
\begin{align*}
\left|J(H^n;\pi)\right| \leq (1+MT^{\frac12}) + 2 \left|\int_0^T \pi_t(\Delta H^n)\,dt\right|,
\end{align*}
we have $\lim_{n\to\infty}J(H^n,\pi)=\infty$.
By approximating functions from $\mc C^{0,2}$ by functions from $\mc C^{1,2}$ we can conclude that $\Iex(\pi)=\infty$, which is a contradiction.
\end{proof}

We are finally in shape to conclude the lower bound.
\begin{proposition}\label{openLB}
 Let $\mc O$ be an open set in $\mc E$. Then
 \begin{align*}
  \varliminf_{n\to\infty} \frac1n \log \bb P_n(\mc O) \geq - \inf_{(\pi,x)\in \mc O}\left\{\mc I_{\mathrm{rw}}(x | \pi) + \mc I_{\mathrm{ex}}(\pi) \right\}.
 \end{align*}
\end{proposition}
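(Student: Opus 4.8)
The plan is to feed the two preceding ingredients into one another: Lemma~\ref{pertSol} gives the lower bound restricted to the family of paths $(u_i,f_i)$ produced by a perturbation $i=\{v_0,H,a\}$, and Lemmas~\ref{approx}--\ref{lemma:holderpaths} are designed to show that, as far as the cost $\mc I$ is concerned, this family is dense among all pairs of finite cost. So fix an open set $\mc O\subseteq\mc E$; we may assume $\mc O$ contains a pair $(x,\pi)$ with $\mc I(x,\pi)<\infty$, since otherwise the right-hand side of the asserted inequality is $-\infty$. It then suffices to prove: for every such $(x,\pi)$ and every $\delta>0$ there is a perturbation $i$ whose hydrodynamic path $(u_i,f_i)$ lies in $\mc O$ and satisfies $\mc I(f_i,u_i)\le\mc I(x,\pi)+\delta$. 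Indeed, Lemma~\ref{pertSol} then yields $\varliminf_{n}\frac1n\log\bb P_n(\mc O)\ge-\mc I(f_i,u_i)\ge-\mc I(x,\pi)-\delta$, and letting $\delta\downarrow0$ and taking the supremum over all $(x,\pi)\in\mc O$ of finite cost gives the proposition.

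\textbf{Step 1: regularization of the target.} First I would replace $(x,\pi)$ by a smooth, uniformly elliptic pair. As $\mc I_{\mathrm{ex}}(\pi)<\infty$, the recovery-sequence construction behind the large deviation lower bound for the exclusion process (see \cite{KOV} and Chapter~10 of \cite{KL}) provides, for each small $\kappa>0$, a smooth $\pi^\kappa$ with $\epsilon_\kappa\le\pi^\kappa\le1-\epsilon_\kappa$, $\pi^\kappa\to\pi$ in $\mc D([0,T];\mc M^+_{0,1}(\bb T))$ and $\mc I_{\mathrm{ex}}(\pi^\kappa)\to\mc I_{\mathrm{ex}}(\pi)$, obtained by mollifying $\pi$ in space and time and then truncating to keep the density in $(0,1)$. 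Since also $\mc I_{\mathrm{rw}}(x|\pi)<\infty$, Lemma~\ref{lem:finite-rate} shows that $x$ is absolutely continuous with $x'$ satisfying \eqref{eq:finite1}--\eqref{eq:finite3}, so a matching mollification yields a smooth $x^\kappa\to x$ with $\|(x^\kappa)'\|_\infty<\infty$. By Lemma~\ref{lemma:holderpaths}, $\pi^\kappa_t(\cdot)$ is H\"older-$1/2$ for a.e.~$t$, so the moving trace $t\mapsto\pi^\kappa_t(x^\kappa_t)$ is well defined and $\pi^\kappa_t(x^\kappa_t)\to\pi_t(x_t)$ in $L^1([0,T])$; Lemma~\ref{approx}, together with the explicit form of $\mc I_{\mathrm{rw}}$ obtained in Section~\ref{RateFn}, then gives $\mc I_{\mathrm{rw}}(x^\kappa|\pi^\kappa)\to\mc I_{\mathrm{rw}}(x|\pi)$. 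Since $\mc O$ is open and $(x^\kappa,\pi^\kappa)\to(x,\pi)\in\mc O$, for $\kappa$ small enough we have $(x^\kappa,\pi^\kappa)\in\mc O$ and $\mc I(x^\kappa,\pi^\kappa)\le\mc I(x,\pi)+\delta$. So we may assume from now on that $\pi$ is smooth with $\epsilon\le\pi\le1-\epsilon$ and that $x$ is smooth.

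\textbf{Step 2: realizing a smooth target as a perturbation.} For such a pair I would recover the parameters $i=\{v_0,H,a\}$ directly. Set $v_0:=\pi_0$. The perturbed hydrodynamic equation \eqref{HydroPerturbedExclusion} reads $\partial_t u_i=\Delta u_i-\partial_x\big(u_i(1-u_i)\partial_x H\big)$; since $\pi(1-\pi)\ge\epsilon(1-\epsilon)>0$, the relation $\partial_t\pi-\Delta\pi=-\partial_x\big(\pi(1-\pi)\partial_x H\big)$ can be solved uniquely for $\partial_x H$, and then $H$ is recovered by integrating in $x$ (the primitive is periodic because $\pi$ has constant mass), with $H\in\mc C^{1,2}$ by smoothness of $\pi$; by uniqueness in Proposition~\ref{prop:perturbed-exclusion} and $u_i(0,\cdot)=\pi_0$ this forces $u_i=\pi$. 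For the drift, \eqref{perturbedODE} becomes $f_i'(t)=e^{a(t)}v^+(\pi_t(f_i(t)))-e^{-a(t)}v^-(\pi_t(f_i(t)))$; taking $a(t):=a_{x,\pi}(t)$ from \eqref{eq:explicitrate2}, which is smooth and bounded under the ellipticity $v^\pm(\pi_t(x_t))>0$, the unique solution of this ODE with $f_i(0)=0$ is $f_i=x$. When $v^+v^-$ may vanish one first replaces $a_{x,\pi}$ by a $\mc C^1$ truncation and passes to the limit, using the explicit rate-function identity of Section~\ref{RateFn} and a Gronwall estimate for \eqref{perturbedODE} to control the resulting error in $\mc I_{\mathrm{rw}}$ and in $f_i$ by an extra $\delta$. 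In all cases $(u_i,f_i)=(\pi,x)$ up to an $O(\delta)$ error, hence $(u_i,f_i)\in\mc O$ once the approximation is fine enough, and $\mc I(f_i,u_i)=\mc I_{\mathrm{rw}}(x|\pi)+\mc I_{\mathrm{ex}}(\pi)=\mc I(x,\pi)$ up to $\delta$, which is what was needed.

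\textbf{Main obstacle.} The delicate part is Step~1, not Lemma~\ref{pertSol}. One must regularize the density $\pi$ and the walk $x$ simultaneously in a way compatible with \emph{both} the Gaussian energy cost $\mc I_{\mathrm{ex}}$, for which the standard recovery sequence is natural, and the Poissonian cost $\mc I_{\mathrm{rw}}$, which sees only the pointwise trace $\pi_t(x_t)$ along the moving walk. Controlling that trace under the approximation is precisely what forces the a.e.\ spatial H\"older regularity of Lemma~\ref{lemma:holderpaths} and the Orlicz-space continuity estimates of Lemma~\ref{approx}, and reconciling the two regularization schemes is the crux of the argument.
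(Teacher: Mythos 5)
Your overall architecture matches the paper's: use Lemma \ref{pertSol} on the family of perturbed hydrodynamic solutions, and then show that this family is dense in cost among all finite-cost pairs via Lemma \ref{approx} and Lemma \ref{lemma:holderpaths}. Step 2 (recovering $H$ from the elliptic PDE and $a$ from the ODE, using that $a_{x,\pi}$ satisfies $x_t'=e^{a}v^+-e^{-a}v^-$) is exactly the mechanism the paper uses, except that the paper solves for $a_N$ along a $\mc C^2$ approximation $y^N$ of $x$ rather than smoothing $x$ first; these are interchangeable.

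There is, however, a genuine gap in your Step 1. You invoke Lemma \ref{approx} to conclude $\mc I_{\mathrm{rw}}(x^\kappa|\pi^\kappa)\to\mc I_{\mathrm{rw}}(x|\pi)$ for a regularization that simultaneously mollifies and truncates $\pi$ into $[\epsilon_\kappa,1-\epsilon_\kappa]$. But Lemma \ref{approx} requires the \emph{limit} density to satisfy $\epsilon\le\pi\le1-\epsilon$; it is an equicontinuity statement valid only in the uniformly elliptic regime, where $v^\pm(\pi_t(x_t))\ge\tilde\epsilon>0$ and the logarithms in the explicit rate function are uniformly controlled. For a general $\pi$ with $\mc I(x,\pi)<\infty$ that touches $0$ or $1$, the lemma simply does not apply, and this is precisely the hardest step: $v^\pm$ may vanish along the trace, and the convergence of $\mc I_{\mathrm{rw}}$ must then be extracted from the integrability conditions \eqref{eq:finite1}--\eqref{eq:finite3}. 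The paper isolates this as a separate final step, approximating by $\pi^N=(1-\tfrac2N)\pi+\tfrac1N\tilde\pi^0+\tfrac1N\tilde\pi^1$ and proving convergence of the rate function \emph{directly} by dominated convergence, using that $v^+$ is a polynomial vanishing at most at the endpoints $0,1$ to dominate $x_t'\log^+((v_t^{+,N})^{-1})$ uniformly in $N$. Your proposal needs this additional argument (or an equivalent one) before Lemma \ref{approx} can take over. A second, more minor point: by changing $x$ and $\pi$ simultaneously you must verify the $L^1$ convergence of the doubly-varying trace $\pi^\kappa_t(x^\kappa_t)\to\pi_t(x_t)$, which you assert but do not prove; the paper deliberately freezes one coordinate at each stage precisely to make this hypothesis of Lemma \ref{approx} checkable.
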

\begin{proof}[Proof of Lemma \ref{openLB}]
We extend Lemma \ref{pertSol} in two steps, using Lemma \ref{approx}. We will always keep either $\pi$ or $x$ constant because that way it is easier to show the $L^1$ condition of Lemma \ref{approx}.

First we drop the restriction on $a$. To do so, fix $H,v_0$ and let $u_i$ be the solution of \eqref{HydroPerturbedExclusion}. By Lemma \ref{lemma:holderpaths}, for almost every $t$ $u_i(t,\cdot)$ is H\"older-1/2 continuous, especially $u_i(t,\cdot)$ is continuous. 

Fix a path $x$ with $\mc I_{\mathrm{rw}}(x|u_i)<\infty$.
We have shown in Section \ref{RateFn} that $x$ is is absolutely continuous whenever $\mc I_{\mathrm{rw}}(x |\pi)<\infty$.
Since the class $\mc C^2$ is dense in the set of absolutely continuous functions, we can consider 
a sequence of paths $\{y^{(N)}: N\geq 1\}$ in $\mc C^2$ such that $y^{N}$ converges to $x$ pointwise.

For each $N\geq 1$, let $a_N\in \mc C^1$ be the unique function identified by the solution of 
\begin{equation*}
(y^{N}_t)' = v_{a_N}\left(u_i(t,y^{N}_t)\right).
\end{equation*}
Note that this is possible since $\epsilon\leq u_i\leq 1-\epsilon$ for some $\epsilon>0$. Hence $y^N$ is the solution of \eqref{perturbedODE} corresponding to $a^N,H,v_0$. Since $u_i$ is continuous for almost all $t$, $u_i(t,y^N_t)$ converges pointwise to $u_i(t,x_t)$ a.e. Since $u_i$ is bounded, this implies $L^1$-convergence. By Lemma \ref{approx}, $\mc I_{\mathrm{rw}}(y^N|u_i)$ converges as well, and hence
\begin{align*}
 &\inf\left\{\mc I(f_i,u_i) : H \in \mc C^{1,2}, v_0 :\bb T\to [0,1], a \in \mc C^1, (u_i,f_i) \in \mc O \right\} \\
&=\inf\left\{\mc I(x,u_i) : H \in \mc C^{1,2}, v_0 :\bb T\to [0,1], (u_i,x) \in \mc O \right\}.
\end{align*}

To remove the remaining restrictions we follow the steps in \cite{KL}, Lemma 5.5, Chapter 5, where the corresponding statement for the perturbed exclusion was proved. What we will show is that the approximation steps in that lemma not only work for $\mc I_{\mathrm{ex}}$ but for $\mc I_{\mathrm{rw}}$ as well. The general idea is the following scheme. If $\pi$ is smooth in time and space and is bounded away from 0 and 1 we can find $H,v_0$ so that $\pi=u_i$, where $u_i$ is the solution of \eqref{HydroPerturbedExclusion}. In three steps the conditions are then relaxed, and in each step the convergence of the rate function is proved by use of Lemma \ref{approx}. A minor difference to Lemma 5.5 in \cite{KL} is that we exchange the order of space and time convolution, however that has no influence on the convergence of $\Iex$.

Assume $\pi$ is bounded away from 0 and 1, smooth in time with $\mc I_{\mathrm{ex}}(\pi)<\infty$, and let $x$ be a path with $\mc I_{\mathrm{rw}}(x|\pi)<\infty$. Let $\iota_\epsilon:\mb T \to [0,\infty)$ be a smooth function which integrates to one and has support contained in $[-\epsilon,\epsilon]$. Define $\pi^N(x) = \int \pi(x+y)\iota_{1/N}(y)\,dy$. By Lemma \ref{lemma:holderpaths} $\pi_t$ is H\"older-$1/2$ continuous for a.e. $t$ and 
\[ |\pi^N_t(x_t)-\pi_t(x_t)| \leq C_t \int |y|^{\frac12}\iota_{1/N}\,dy, \]
which converges to 0. 
\newline Therefore we can use Lemma \ref{approx} and obtain that $\lim_{N\to\infty}\Irw(x|\pi^N) = \Irw(x,\pi)$ and hence $\lim_{N\to\infty} \mc I(x,\pi^N) = \mc I(x,\pi)$. Since $\pi^N$ is smooth in space and time and is bounded away from 0 and 1, there are $H,v_0$ so that $\pi^N$ is the solution of \eqref{HydroPerturbedExclusion}. Hence
\begin{align*}
&\inf\left\{\mc I(x,u_i) : H \in \mc C^{1,2}, v_0 :\bb T\to [0,1], (u_i,x) \in \mc O \right\} \\
&= \inf\left\{\mc I(x,\pi) : \exists \epsilon>0: \epsilon\leq \pi\leq 1-\epsilon, \text{$\pi$ smooth in time}, (\pi,x) \in \mc O \right\}.
\end{align*}

Now assume $\pi$ is a density bounded away from 0 and 1, and $x$ is a path with $\mc I_{\mathrm{rw}}(x|\pi)<\infty$. Extend $\pi$ from $[0,T]$ to $[0,T+1]$ by the heat equation. Let $\beta_\epsilon:\mb R\to[0,\infty)$ be a smooth function which integrates to 1 and whose support is contained in $[0,\epsilon]$. Define $\pi^N$ via $\pi^N_t = \int_0^{\frac1N} \pi_{t+s} \beta_{1/N}(s)\,ds$. Since $\pi$ is a c\`adl\`ag path $\pi^N_t(x_t)$ converges to $\pi_t(x_t)$. Hence the condition for Lemma \ref{approx} is satisfied and $\mc I_{\mathrm{rw}}(x|\pi^N)$ converges to $\mc I_{\mathrm{rw}}(x|\pi)$. 

As a final step, assume that $\mc I(x,\pi)<\infty$. Let $\tilde\pi^0$,$\tilde\pi^1$ be the constant paths identical to 0 and 1 respectively. Let $\pi^N = (1-\frac2N)\pi + \frac1N\tilde\pi^0+ \frac1N\tilde\pi^1$. We can no longer apply Lemma \ref{approx}. Instead we prove the statement directly via dominated convergence using estimates similar to the first part of the proof of Lemma \ref{lem:finite-rate}. Let $M$ be the supremum of $v^\pm$ and write $v_t^{\pm,N}:=v_t^{\pm}(\pi^N_t(x_t))$. Then, by \eqref{eq:estimate1},
\begin{align}\label{eq:estimate2} 
0\leq \sum_{z=\pm} v^{z,N}_te^{za_{x,\pi^N}(t)}\leq 2|x_t'|+2M,
\end{align}
which is integrable and independent of $N$. Next, just as in Lemma \ref{lem:finite-rate}, we only need to to find an upper bound on the positive part $(x_t'a_{x,\pi^N}(t))^+$,  which we do only for $x'_t>0$. Using $\log^+\leq |\log| = \log^++\log^-$, 
\begin{align*}
a^+_{x,\pi^N}(t)x_t' &= x_t' \log^+\left(\frac{x_t'+\sqrt{(x_t')^2+4v^{+,N}_tv^{-,N}_t}}{2v^{+,N}_t}\right) \\
&\leq x_t'\left|\log\left(\frac{x_t'+\sqrt{(x_t')^2+4v^{+,N}_tv^{-,N}_t}}{2}\right)\right| + x_t'\left|\log\left((v^{+,N}_t)^{-1}\right)\right|	\\
&\leq x_t' \left[ \log^+(x_t'+M)+\log^-(x_t')\right] + x_t'\left[\log^+\left((v^{+,N}_t)^{-1}\right)+\log^-\left(M^{-1}\right)\right].
\end{align*}
The only dependence on $N$ is in $x_t'\log^+\left((v^{+,N}_t)^{-1}\right)$. The function $v^+(\cdot)$ is a non-negative polynomial which can be 0 only at the boundary points 0 and 1. If $v^+$ is positive everywhere there is nothing to prove. Assume $v^+$ has at least one 0. Since as a polynomial $v^+$ is monotone near 0 and 1 we can find $\delta>0$  so that $(v^{+,N}_t)\geq \min\left(\inf_{\rho\in[\delta,1-\delta]}v^+(\rho),v^+(\pi_t(x_t))\right)$. 
By the assumption that $\mc I_{rw}(x|\pi)<\infty$ and Lemma \ref{lem:finite-rate} we use dominated convergence to show that $\lim_{N\to\infty}\mc I_{rw}(x|\pi^N)=\mc I_{rw}(x|\pi)$.
\end{proof}

\end{document}